% SIAM Article Template
\documentclass[review,hidelinks,onefignum,onetabnum]{siamart220329}

% Information that is shared between the article and the supplement
% (title and author information, macros, packages, etc.) goes into
% ex_shared.tex. If there is no supplement, this file can be included
% directly.

% SIAM Shared Information Template
% This is information that is shared between the main document and any
% supplement. If no supplement is required, then this information can
% be included directly in the main document.

% Packages and macros go here
\usepackage{lipsum}
\usepackage{amsfonts}
\usepackage{graphicx}
\usepackage{epstopdf}
\usepackage{algorithmic}
\ifpdf
  \DeclareGraphicsExtensions{.eps,.pdf,.png,.jpg}
\else
  \DeclareGraphicsExtensions{.eps}
\fi

% Add a serial/Oxford comma by default.

% Used for creating new theorem and remark environments
\newsiamremark{remark}{Remark}
\newsiamremark{hypothesis}{Hypothesis}
\crefname{hypothesis}{Hypothesis}{Hypotheses}
\newsiamthm{claim}{Claim}

% Sets running headers as well as PDF title and authors
\headers{The Riemann problem for the foam displacement in porous media }{G. Fritis, P. Paz, L. Lozano and G. Chapiro}

% Title. If the supplement option is on, then "Supplementary Material"
% is automatically inserted before the title.
\title{On the Riemann problem for the foam displacement  in porous media with linear adsorption\thanks{Submitted to the editors \today.
\funding{The current work was conducted in association with the R$\&$D project ANP 20715-9, “Modelagem matemática e computacional de injeção de espuma usada em recuperação avançada de petróleo” (UFJF/Shell Brazil/ANP). Shell Brazil funds them in accordance with ANP’s R$\&$D regulations under the Research, Development, and Innovation Investment Commitment. These projects are carried out in partnership with Petrobras. G.C. was supported in part by CNPq Grant 306970/2022-8 and FAPEMIG grant APQ-00405-21.
G.C. and L.L. were supported in part by CNPq grant 405366/2021-3}}}

% Authors: full names plus addresses.
\author{Giulia C. Fritis \thanks{Laboratory of Applied Mathematics (LAMAP), Federal University of Juiz de Fora, Brazil (\email{giulia.fritis@estudante.ufjf.br}, \email{pavel.sejas.paz@ice.ufjf.br}, \email{luis.guerrero@ice.ufjf.br}, \email{grigori@ice.ufjf.br}).}
\and Pavel S. Paz \footnotemark[2]
\and Luis F. Lozano \footnotemark[2]
\and Grigori Chapiro\footnotemark[2]}

\usepackage{amsopn}

%%% Local Variables: 
%%% mode:latex
%%% TeX-master: "ex_article"
%%% End: 

\usepackage{mathrsfs} 
\newtheorem{prop}{Proposition}[section]

\definecolor{darkpastelgreen}{rgb}{0.01, 0.75, 0.24}

\definecolor{tiffanyblue}{rgb}{0.04, 0.73, 0.71} %%

\definecolor{ao}{rgb}{0.0, 0.0, 1.0}

\usepackage{caption}
\usepackage{subcaption}

% Optional PDF information
\ifpdf
\hypersetup{
  pdftitle={On the Riemann problem for a model describing the foam displacement with linear adsorption},
  pdfauthor={G. Fritis, P. Paz, L. Lozano and G. Chapiro}
}
\fi

% The next statement enables references to information in the
% supplement. See the xr-hyperref package for details.

%\externaldocument[][nocite]{ex_supplement}

% FundRef data to be entered by SIAM
%<funding-group specific-use="FundRef">
%<award-group>
%<funding-source>
%<named-content content-type="funder-name"> 
%</named-content> 
%<named-content content-type="funder-identifier"> 
%</named-content>
%</funding-source>
%<award-id> </award-id>
%</award-group>
%</funding-group>

\begin{document}
\nolinenumbers
\maketitle

% REQUIRED
\begin{abstract}
Motivated by the foam displacement in porous media with linear adsorption, we extended the existing framework for two-phase flow containing an active tracer described by a non-strictly hyperbolic system of conservation laws.
We solved the global Riemann problem by presenting possible wave sequences that composed this solution.
Although the problems are well-posedness for all Riemann data, there is a parameter region where the solution lacks structural stability.
We verified that the model implemented on the most used commercial solver for geoscience, CMG-STARS, describing foam displacement in porous media with adsorption satisfies the hypotheses to apply the developed theory, resulting in structural stability loss for some parameter regions.
\end{abstract}

\begin{keywords}
Riemann problem, Foam, Adsorption, Porous Media
\end{keywords}

% REQUIRED
\begin{MSCcodes}
%35Lxx: Partial differential equations of hyperbolic type
%35L65: Conservation laws
35L65;
% 76Rxx: Diffusion and convection
% 76S05: Flows in porous media; filtration; seepage
76S05;
%35Qxx: Equations of mathematical physics and other areas of application
%35Q35: Other equations arising in fluid mechanics
35Q35;
%74Sxx Numerical methods
%74S20: Finite difference method
74S20
\end{MSCcodes}

\section{Introduction}
\label{sec:intro}

In this investigation, we study the non-strictly hyperbolic system of conservation laws given by
\begin{eqnarray}
\label{eq:mass_con_S}   \partial_t S + \partial_xf(S,C) &=& 0,\\
\label{eq:evol_con_C}    \partial_t\left[(S+\mathcal{A})C\right] + \partial_x\left[f(S,C)C\right] &=& 0,
\end{eqnarray}
where $(S,C) \in I\times I$, with $I=[0,1]$, $(x,t) \in \mathbb{R}\times \mathbb{R}^+$, $\mathcal{A}>0$ is a constant and $f: I\times I \rightarrow \mathbb{R}$. This system describes a two-phase flow in a porous medium with an active tracer (diluted in the wetting phase), which can be adsorbed to the surrounding matrix. Typically, $f=f(S,C)$ is the fractional flow function of the wetting phase. Here, $S$ represents the saturation of the wetting phase, $C$ indicates the tracer concentration in the wetting phase, and $\mathcal{A}$ represents the linear adsorption of the tracer fluid. 
In this paper, we construct the weak solution of the Riemann problem given by the system \eqref{eq:mass_con_S}-\eqref{eq:evol_con_C} and initial condition
\begin{equation}
\label{eq:initial_cond}
(S(x,0) ,C(x,0) )= \left\lbrace
\begin{array}{ll}
(S_J,C_J), & \textup{if } x < 0, \\
(S_I,C_I), & \textup{if } x \geq 0, 
\end{array}
\right.
\end{equation}
where the sub-indexes $J$ and $I$ indicate the injection (left) and initial (right) conditions, respectively. 
Observe that, Eqs.~\eqref{eq:mass_con_S} and \eqref{eq:evol_con_C} form a system coupled through the fractional flow function $f$. In the particular case when $C$ is constant, the solution construction follows a Buckley-Leverett type solution \cite{BuckleyLeverett1942}, which involves a shock or a rarefaction wave, or a combination of both.

The model presented in Eqs.~\eqref{eq:mass_con_S}-\eqref{eq:initial_cond} extends the investigation of Isaacson \cite{ELI} and Isaacson \& Temple \cite{isaacson1986analysis} by introducing tracer adsorption to the surrounding matrix (the constant $\mathcal{A}$). A similar model has been studied by Johansen \& Winther \cite{johansen1988} considering Langmuir's adsorption, which does not apply to the linear case as it presents different characteristic family properties.
%; however, this theory was not applied to the linear adsorption case addressed here. 
%\giu{The main difference between the models describing the linear and non-linear adsorption is in the characteristics families properties, making non trivial the adaptation of the non-linear theory to the linear case.}\textcolor{red}{Destacar porque a teoria do Johansen não pode ser aplicada para o caso linear}. 
Both \cite{ELI, isaacson1986analysis}, and \cite{johansen1988} aimed at the polymer dissolved in the aqueous phase displacing the oil phase, while in the present paper, we focus on the foam displacement in porous media saturated with water and gas. This problem appears in industrial applications \cite{Hematpur2018,zeng2016effect}. The systems studied in \cite{Barkve1989,da1992riemann} are similar to \eqref{eq:mass_con_S}-\eqref{eq:evol_con_C}, describing the effects of temperature on the oil displacement. Two transition curves are obtained in \cite{Barkve1989,da1992riemann}, while in \cite{johansen1988,ELI,isaacson1986analysis} appears one transition curve, as in the present work. 
In \cite{tang2019three, Mehrabi2022}, the authors investigate three-phase foam displacement in the presence of oil for a few Riemann problems. In the present work, we classify solutions for all possible Riemann problems as in \cite{Barkve1989,da1992riemann,johansen1988,ELI, isaacson1986analysis}.

%Several works address non-strictly hyperbolic systems, which can be classified according to the solution's characteristic fields (families). For example, in \cite{keyfitz1980system}, the authors deal with linearly degenerate and genuinely non-linear families. In \cite{johansen1988}, both families are not degenerate. In \cite{Barkve1989, ELI, isaacson1986analysis}, one family is linearly degenerate, and the other possesses local linear degeneracies; the same structure appears in the model studied in the present article. Our sequence of proofs is inspired in \cite{ELI,johansen1988}.

The well-posedness (including solution existence, uniqueness, and continuous dependence on parameters) is essential for the model's reliability. The works cited above address the solution existence by constructing the compatible wave sequence. 
The uniqueness of the solutions for Riemann problems is a challenging topic addressed by authors in the last years; see \cite{azevedo2014uniqueness,azevedo1996nonuniqueness} and references therein.
For strictly hyperbolic conservation laws systems, studies successfully showed the uniqueness of Riemann's problem solution, for instance, \cite{dafermos2005hyperbolic,liu1975existence,Smoller1994}. On the other hand, for non-strictly hyperbolic systems, such a theory cannot be applied, and each system needs to be investigated separately. For example, uniqueness's proof was provided in \cite{azevedo2014uniqueness,johansen1988}; examples of the non-uniqueness were presented in \cite{azevedo1996nonuniqueness,Barkve1989,ELI}. Besides the solution's continuous dependency on the parameters, similar models demonstrate a lack of structural stability \cite{thesisFred, schecter1996structurally}.
In the present work, we show the well-posedness of the model and that it lacks structural stability.

CMG-STARS is a widely used geoscience reservoir software designed to model and simulate oil and gas recovery processes. 
CMG-STARS is the unquestioned application standard in thermal and advanced processes reservoir simulation \cite{tunnish2019history}.
CMG-STARS is recognized for its capability to represent both experimental and field results, while it also can model complicated chemical performance \cite{norris2011core}. In this paper, we apply the developed theory to the model implemented in CMG-STARS simulator \cite{Stars,zeng2016effect}

This article is structured as follows. 
%Section \ref{sec:RP} separates the phase plane $S$-$C$ in sets that will be used in all article proofs.
Section \ref{sec:Prel_Res} presents preliminary results on the fundamental waves appearing in the solution.
Section \ref{sec:RP_construction} contains the principal results concerning the construction of the Riemann problem's solution. 
Section~\ref{Sec:Uniqueness} discusses the well-posedness of the problem.
Section~\ref{sec:application} applies the developed theory to the model implemented in the CMG-STARS simulator and compares the analytical solutions with direct numerical simulations. Finally, discussions are presented in Section~\ref{sec:conclusions}.

\section{Preliminary results}\label{sec:Prel_Res}

Following the literature \cite{ELI, isaacson1986analysis,johansen1988, temple1982global}, we assume that the flux is described by the real function $f(S,C)$, with S-shape for each fixed value of $C$, see the left panel in Fig.~\ref{fig:f_C}. This assumption is common in many applications and represents realistic physics; however, the S-shape can be less obvious in real-world applications, see the right panel in Fig.~\ref{fig:f_C}. We assume that the real function $f=f(S,C)$ satisfies the following properties:
\begin{enumerate}
	\item[$a$)] The function $f\in \mathscr{C}^2$, $f(0,C)=0$ and $f(1,C)=1$ for every $C \in I$. Also, $\partial_S f(0,C) = \partial_S f(1,C) = 0$, for each $C\in I$. 
	\item[$b$)] For each $C \in I$, $f(S,C)$ is a strictly increasing function of $S$ with a single inflection point.
	\item[$c$)] The derivative of $f$ in $C$ satisfies $\partial_C f(S,C) >0$, for $0<S<1$ and $C \in I$. 
 
	\begin{figure}[h!]
		\centering	    
		\includegraphics[height=5cm]{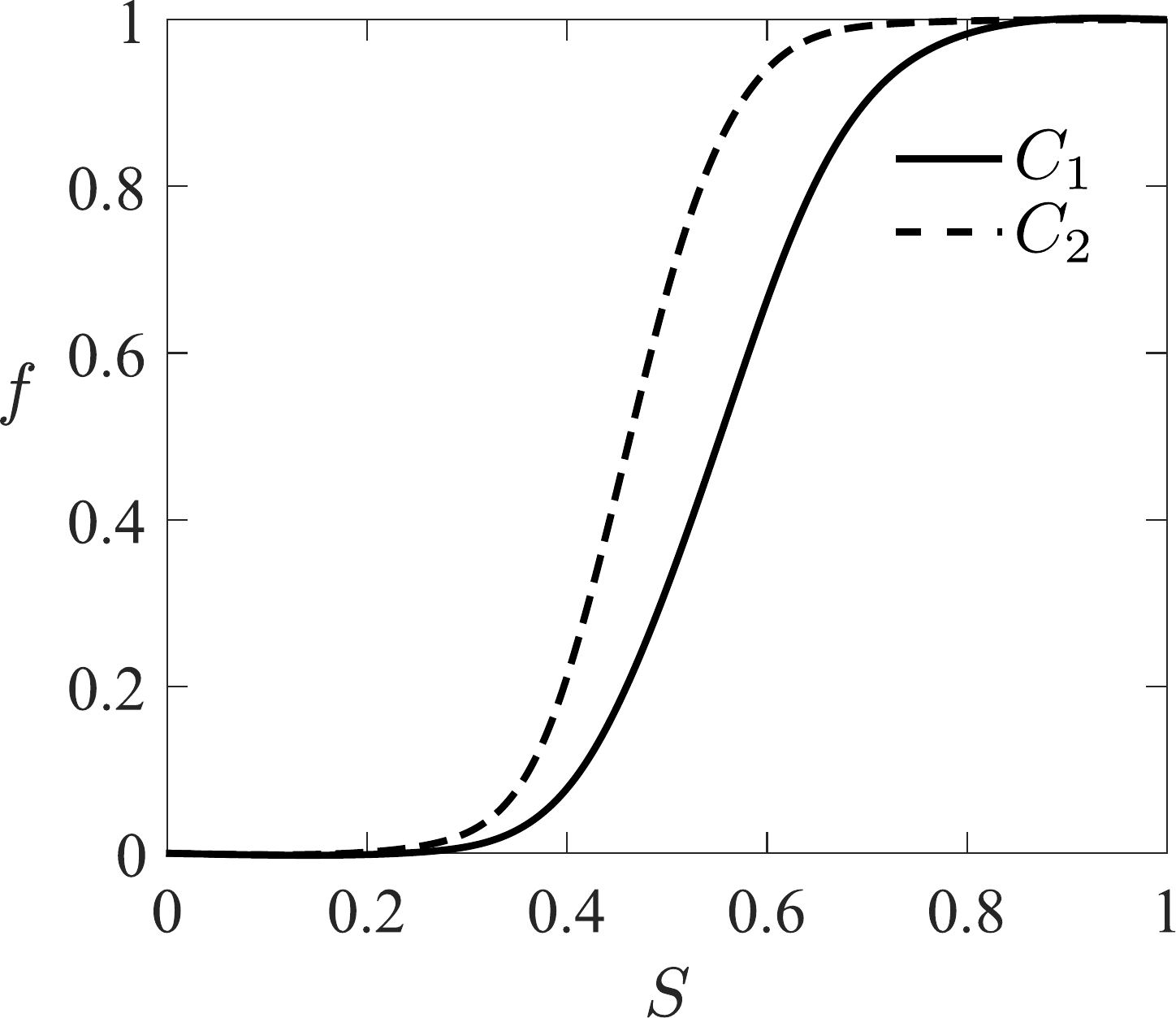} \hspace{0.05\linewidth}
		\includegraphics[height=5cm]{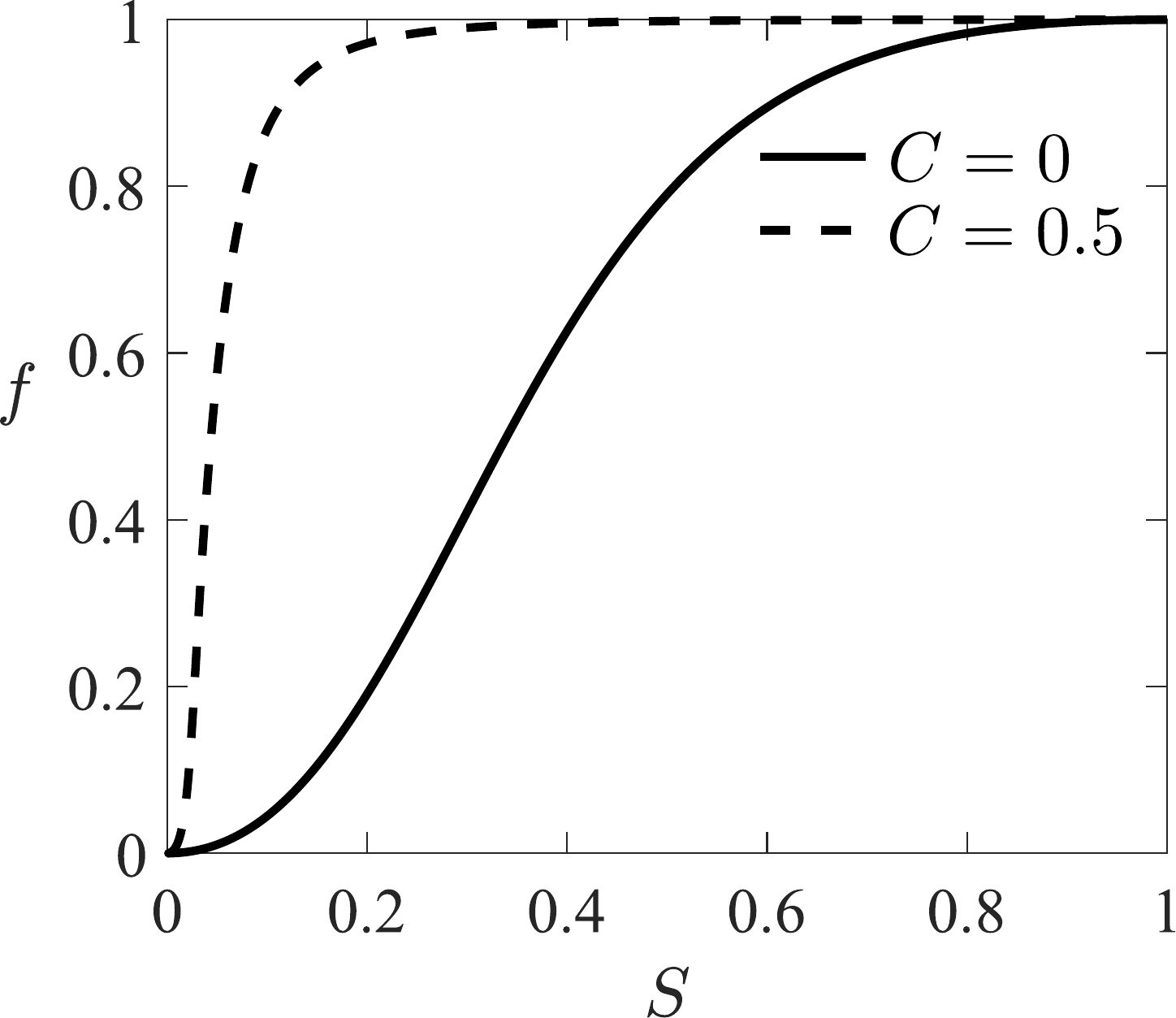}
 		\caption{Fractional flow function for different values of $C$. On the left panel, we present a schematic representation. On the right panel, we use the realistic model described in Subsection~\ref{App:A} with parameter values from Table \ref{Table:Values_parameter}.}
		\label{fig:f_C}
	\end{figure}
\end{enumerate}

The properties $a)$ and $b)$ above are the same as in \cite{ELI, isaacson1986analysis,johansen1988, temple1982global}, while in item $c)$ $f$ is an increasing function of $C$ differently from the same references. The change of variable $\overline{C}=1-C$, results in a similar scenario as  \cite{ELI, isaacson1986analysis,johansen1988, temple1982global}. For this reason, some details in the proofs presented below are omitted. Notice that the system studied here is different from the references above due to the considered linear adsorption (see the constant $\mathcal{A}$ in \eqref{eq:evol_con_C}). This is the main difference in relation to the models studied in referenced works. The modification in item $c)$ corresponds to the fractional flow function $f$ appearing in the local equilibrium model describing the foam displacement implemented in the commercial CMG-STARS simulator \cite{Stars,Zhang2009}, as explained in Subsection~\ref{App:A}.

\subsection{Phase plane $S$-$C$ division}
\label{sec:RP}

%This modification allows us to apply the results to a fractional flow function $f$ appearing in the local equilibrium model describing the foam displacement implemented in the commercial CMG-STARS simulator \cite{Stars,Zhang2009} as explained in Subsection~\ref{App:A}.

The system \eqref{eq:mass_con_S}-\eqref{eq:evol_con_C} can be written in the general form 
\begin{equation}\label{eq:conservative_form}
\partial_t U + A(U) \partial_xU= \mathbf{0},
\end{equation}
where $U$ denotes the vector state $U=(S,C)^T$, $T$ indicates the matrix transpose operator, and $A(U)$ is the $2\times 2$ upper triangular matrix, which is written as
\begin{align}\label{eq:matrix_A}
A(U)=  
\begin{pmatrix} 
\partial_Sf & \partial_Cf \\ 
0 & f /(S+\mathcal{A})  
\end{pmatrix}.
\end{align}
The eigenvalues and eigenvectors associated with the matrix $A$ are given by
\begin{eqnarray}
\label{eq:lambda_C}
\lambda_C &=& f/(S+\mathcal{A}), \quad \quad \,
r_C= \left( \partial_Cf , f/(S+\mathcal{A})-\partial_Sf  \right)^T,  \\
\label{eq:lambda_S}
\lambda_S &=&\partial_Sf,\quad \, \quad \quad \quad \quad r_S=(1,0)^T.
\end{eqnarray}

Our next step is proving that the set of points, where the eigenvalues $\lambda_C$ and $\lambda _S$ coincide, is a curve. This result is analogous to the one presented in \cite{Barkve1989,da1992riemann}.
\begin{prop}\label{pro:Transition_Lemma}
For each fixed $C \in I$, there exists a unique $S^* = S^*(C)$ in the interior of the interval $I$, such that
\begin{equation}\label{Transition_Lemma}
\lambda_C(S^*,C) = \lambda_S(S^*,C).
\end{equation}
\end{prop}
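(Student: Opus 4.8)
The plan is to reformulate the coincidence condition \eqref{Transition_Lemma} as the vanishing of a single scalar function and then study that function's monotonicity. Fixing $C \in I$ and writing $f=f(S,C)$, I would introduce
\[
\psi(S) := f(S,C) - (S+\mathcal{A})\,\partial_S f(S,C),
\]
and observe that, since $S+\mathcal{A}>0$ on $I$,
\[
\lambda_S(S,C) - \lambda_C(S,C) = \partial_S f - \frac{f}{S+\mathcal{A}} = -\frac{\psi(S)}{S+\mathcal{A}}.
\]
Hence \eqref{Transition_Lemma} holds precisely at the zeros of $\psi$, and it suffices to prove that $\psi$ has a unique zero in the interior of $I$.

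Next I would differentiate; the two $\partial_S f$ contributions cancel and one is left with
\[
\psi'(S) = -(S+\mathcal{A})\,\partial_{SS} f(S,C).
\]
By properties $a)$ and $b)$, for fixed $C$ the slope $\partial_S f$ vanishes at $S=0$ and $S=1$, is positive in between, and attains a single interior maximum at the inflection point $S_{\mathrm{infl}}$; consequently $\partial_{SS} f > 0$ on $(0,S_{\mathrm{infl}})$ and $\partial_{SS} f < 0$ on $(S_{\mathrm{infl}},1)$. Because $S+\mathcal{A}>0$, this makes $\psi$ strictly decreasing on $(0,S_{\mathrm{infl}})$ and strictly increasing on $(S_{\mathrm{infl}},1)$.

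Finally I would read off the boundary values from property $a)$: $\psi(0)=f(0,C)-\mathcal{A}\,\partial_S f(0,C)=0$ and $\psi(1)=f(1,C)-(1+\mathcal{A})\,\partial_S f(1,C)=1>0$. Combining with the monotonicity above: starting from $\psi(0)=0$ and strictly decreasing, $\psi$ is strictly negative on all of $(0,S_{\mathrm{infl}}]$, so no zero occurs there; on $(S_{\mathrm{infl}},1)$ the function increases strictly from a negative value to $\psi(1)=1>0$, so by the intermediate value theorem it has exactly one zero $S^*\in(S_{\mathrm{infl}},1)$, which lies in the interior of $I$. This delivers both existence and uniqueness simultaneously.

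The step I expect to demand the most care is not any individual computation but the correct treatment of the boundary coincidence: at $S=0$ one already has $\lambda_C=\lambda_S=0$ (equivalently $\psi(0)=0$), so the claim that the relevant crossing is both interior and unique rests entirely on the single-inflection-point hypothesis. I would therefore be explicit that ``strictly increasing with a single inflection point'' is used exactly to guarantee that $\partial_{SS} f$ changes sign only once, from $+$ to $-$, since this is what forces $\psi$ to cross zero only once, on the increasing branch past $S_{\mathrm{infl}}$, and rules out any spurious interior zero on $(0,S_{\mathrm{infl}})$.
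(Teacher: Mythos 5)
Your proof is correct and takes essentially the same route as the paper: the paper's proof introduces $\phi(S)=f(S,C)/(S+\mathcal{A})-\partial_S f(S,C)$ (your $\psi$ is just $(S+\mathcal{A})\,\phi$, with the same zero set), uses the single inflection point to obtain the same decreasing-then-increasing sign pattern, and concludes from the boundary values $0$ and $1$ via the intermediate value theorem. If anything, your normalization is the one for which the derivative formula $-(S+\mathcal{A})\,\partial_{SS}f$ and the endpoint value $\psi(1)=1$ hold exactly as stated, so your write-up is marginally cleaner than the paper's.
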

\begin{proof}
Let us consider a fixed $C \in I$ and define $\phi: I \to \mathbb{R}$, such that
$\phi(S) = f(S,C)/(S+\mathcal{A}) - \partial_Sf(S,C)$. In fact, from \eqref{eq:lambda_C}-\eqref{eq:lambda_S}, it follows that $\phi \in \mathscr{C}^1$ and $\phi(S)= 0$ if and only if $\lambda_S(S,C) =\lambda_C(S,C)$
in the interior of the interval $I$. Let us prove that $\phi$ possesses only one root in the open interval $(0,1)$.
Derivation of $\phi$ results in $d_S\phi(S) = - \partial_{SS}f(S,C)(S+\mathcal{A})$. 
Denoting the inflection point of $f(\cdot, C)$ by $S^i = S^i(C)$, yields
\begin{equation*}
d_S\phi(S)< 0  \, \text{ if } S<S^i, \quad
d_S\phi(S)= 0  \, \text{ if } S=S^i, \quad  d_S\phi(S)> 0  \, \text{ if } S>S^i.
\end{equation*}
Therefore, 
$\phi$ has a minimum at $S^i$ and $\phi(S^i)<0$. Once $\phi(0)=0, \ \phi(1) = 1$ and for $S$ greater than $S^i(C)$ we possess $d_S\phi(S)>0$; by the Intermediate Value Theorem there exists a unique $S^*=S^*(C)$ such that $\phi(S^*) = 0$.
For this reason, there exists a unique $S^*$ (see the left panel in Fig.~\ref{fig:diagram}), such that the relation \eqref{Transition_Lemma} is satisfied. Although the S-shape is less evident for the real applications presented in Subsection~\ref{App:A}, the construction shown here is still valid; see the right panel in Fig~\ref{fig:diagram}.
\end{proof}

\begin{figure}[h!]
	\centering	    
	\includegraphics[height=5cm]{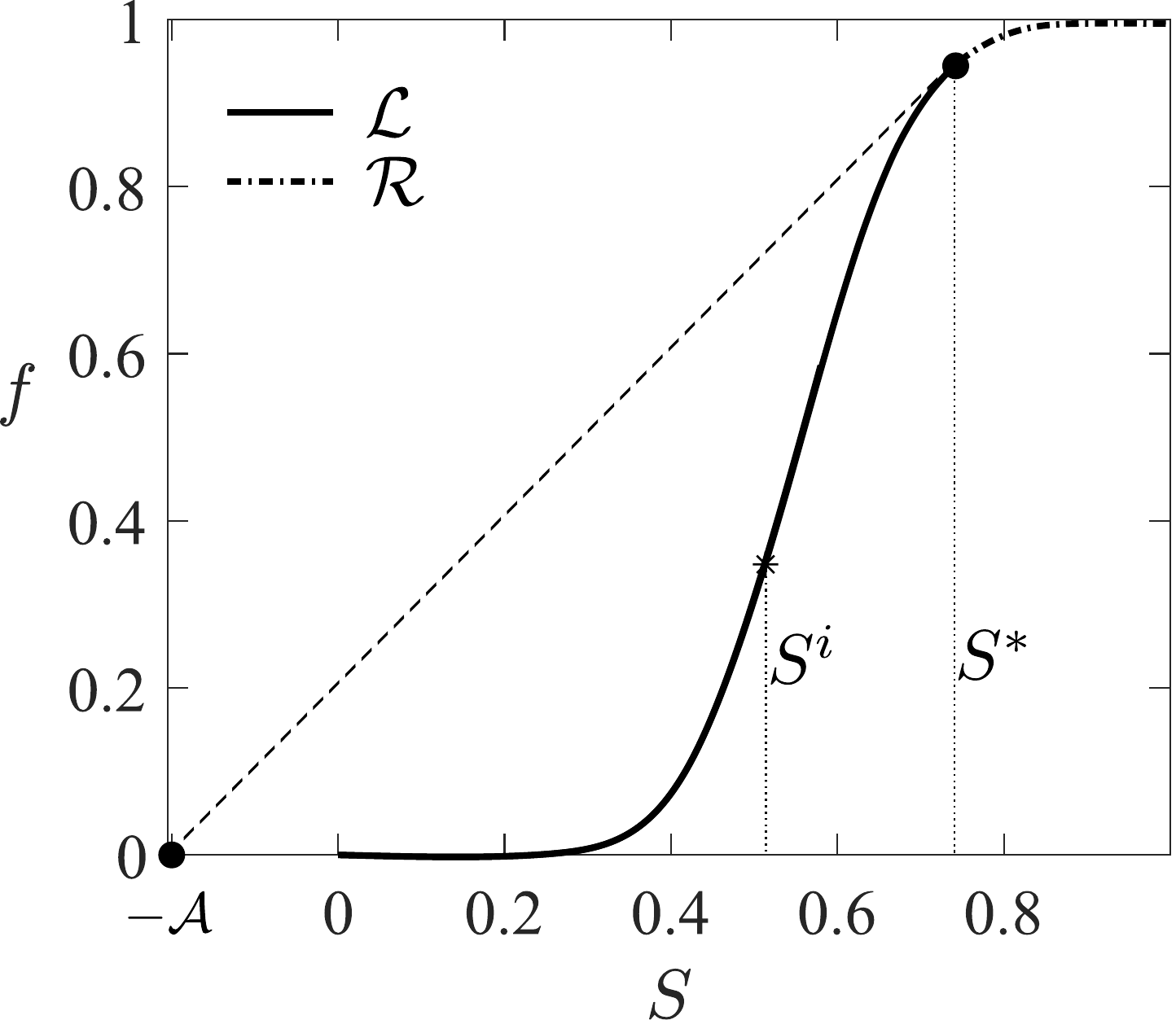} \hfill
	\includegraphics[height=5cm]{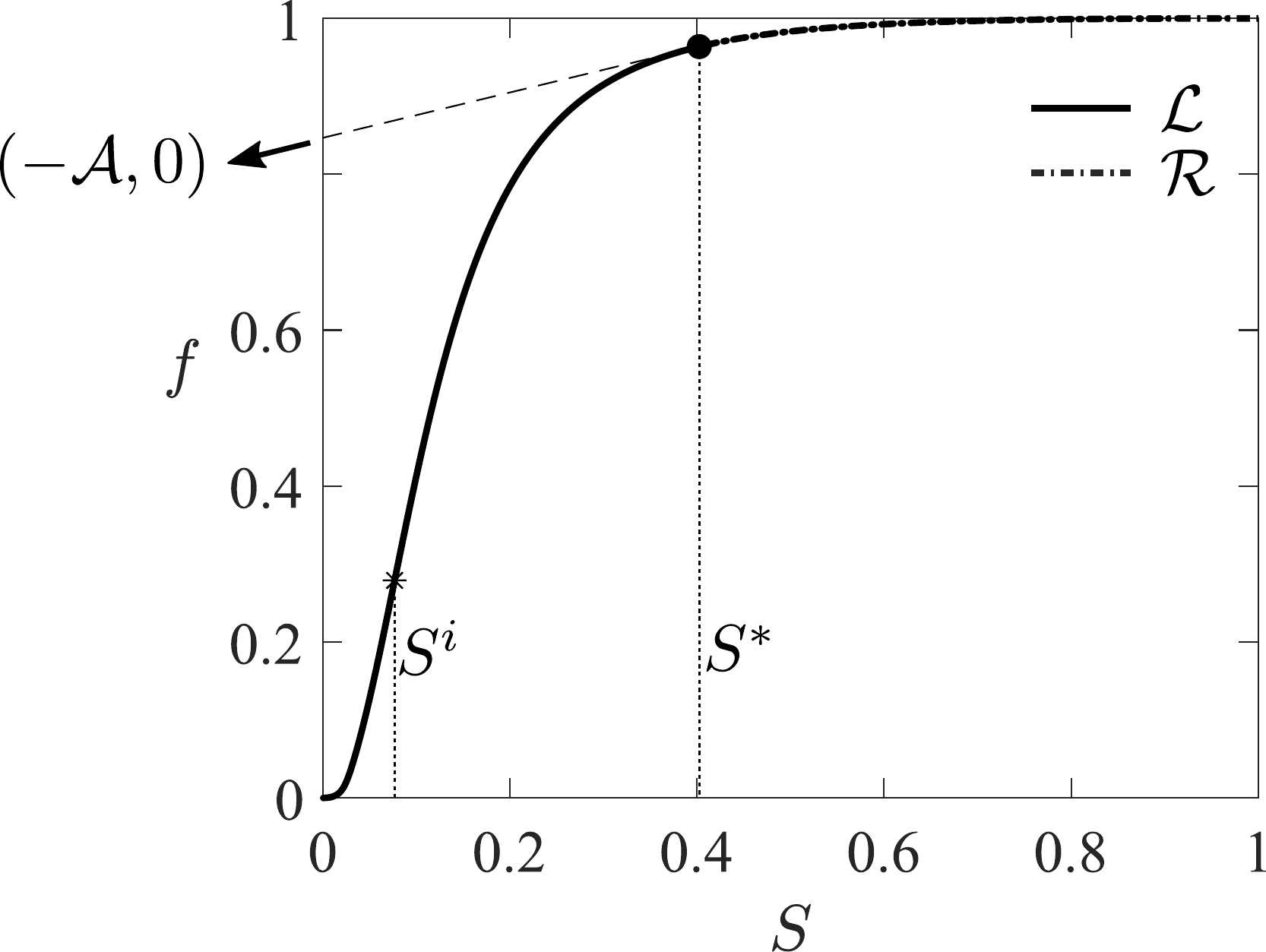}
	\caption{Fractional flow function $f$ with the inflection point and tangent line starting from point $(-\mathcal{A},0)$, which coincides with the slope of $f$ at $S^*$. The left panel presents a schematic representation. The right one corresponds to the realistic model described in Subsection~\ref{App:A} with parameter values from Table \ref{Table:Values_parameter}. 
	}
	\label{fig:diagram}
\end{figure}

We stress that the point $S^*$ defined in the proof above is widely used throughout this work.

We consider the phase plane as the set $I \times I$, described as a union of three sets: the \emph{transition curve} $\mathcal{T}$ (by Proposition \ref{pro:Transition_Lemma}), $\mathcal{L}$ on the left side of $\mathcal{T}$, and $\mathcal{R}$ on the right side of $\mathcal{T}$, see Fig.~\ref{fig:regions}. Those sets are defined as
\begin{eqnarray}
\label{eq:transition_curve}	\mathcal{T}&=&\left\{U=(S,C) \in I\times I : \lambda_S(U)=\lambda_C(U)  \right\},\\
\label{eq:set_L}	\mathcal{L}&=&\left\{U=(S,C) \in I\times I : \lambda_S(U)>\lambda_C(U) \right\},\\
\label{eq:set_R}	\mathcal{R}&=&\left\{U=(S,C) \in I\times I : \lambda_S(U)<\lambda_C(U) \right\}.
\end{eqnarray}
Notice that, the eigenvectors $r_C$ and $r_S$ (see Eqs. \eqref{eq:lambda_C}-\eqref{eq:lambda_S}) are linearly dependent in $\mathcal{T}$. Thus, the matrix $A$ is not diagonalizable in $\mathcal{T}$, similar to \cite{ELI,isaacson1986analysis, johansen1988, temple1982global}.
\begin{figure}[h!]
	\centering	    
	\includegraphics[height=5cm]{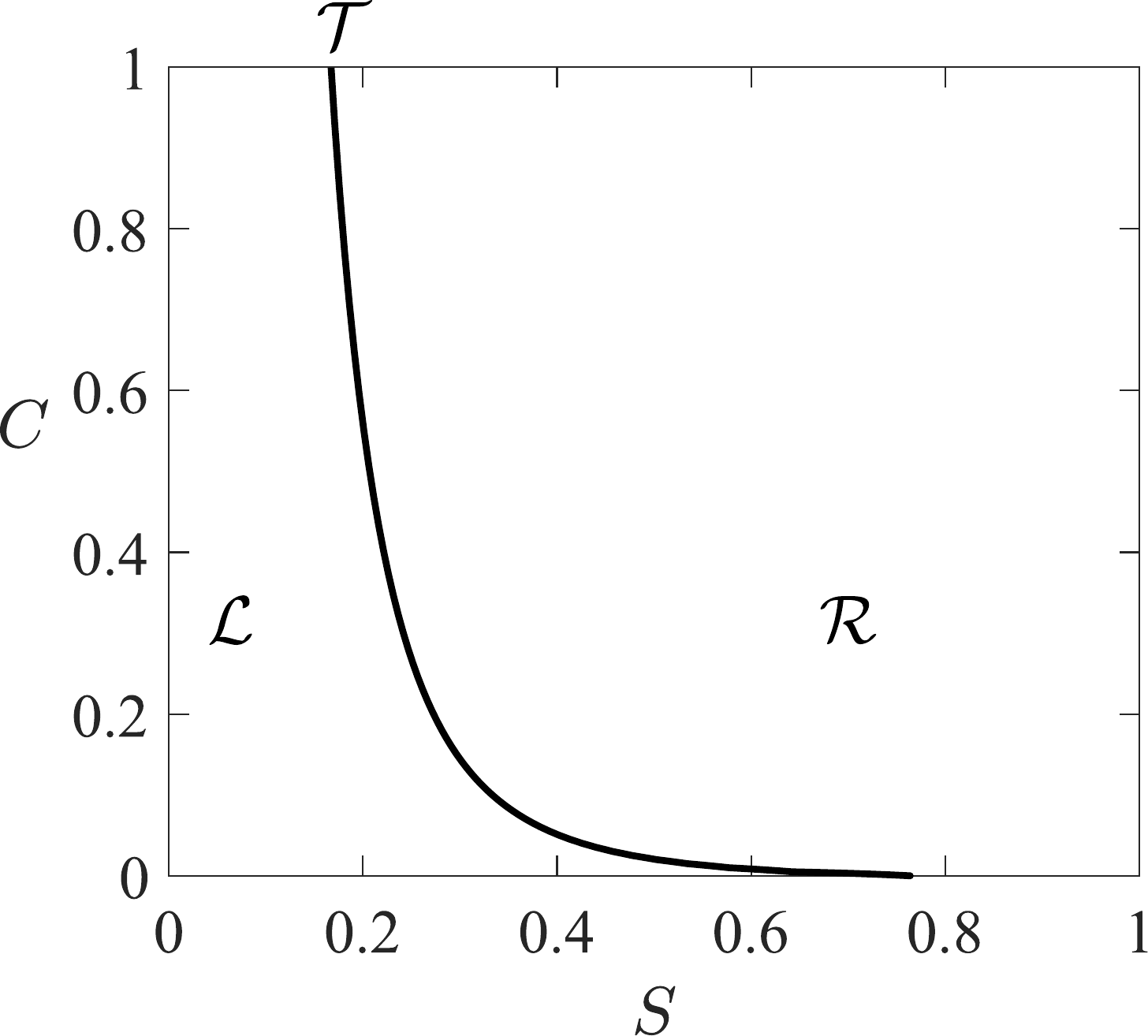}
	\caption{The phase plane is divided in sets $\mathcal{L}$ and $\mathcal{R}$ separated by transition curve $\mathcal{T}$.}
	\label{fig:regions}
\end{figure}

\subsection{Fundamental waves}
\label{sec:preliminary}
The typical solution to the Riemann problem is composed of constant states separated by waves moving with different velocities.
In this section, we review the basic wave concepts required to solve the Riemann problem \eqref{eq:mass_con_S}-\eqref{eq:evol_con_C} and initial condition \eqref{eq:initial_cond}. The states on each wave left and right are referred to as $U_L$ and $U_R$, respectively.

To determine the basic waves that compose the solution of system \eqref{eq:mass_con_S}-\eqref{eq:evol_con_C}, we classify the characteristic fields %(see \cite{Smoller1994} for further details) 
corresponding to variables $S$ and $C$. The $C$-characteristic field satisfies $\nabla\lambda_C \cdot r_C = 0$, classified as {\it linearly degenerate} and results in a contact discontinuity solution \cite{LeVeque1990}. The $S$-characteristic field provides $\nabla\lambda_S \cdot r_S =\partial_{SS}f$, which gives us a field with a {\it local linear degeneracies} at the inflection point of $f$ (where $\partial_{SS}f=0$) for constant $C$ \cite{Barkve1989}. The corresponding solution can be a shock discontinuity, a rarefaction wave, or a combination of both.

\subsubsection{Rarefaction waves}\label{Sub_Sec:Rarefaction}

Rarefaction waves are continuous solutions of the system \eqref{eq:conservative_form} connecting states $U_L$ and $U_R$ satisfying
\begin{equation}\label{eq:Rar_Geral}
U(x,t)=V(x/t),
\end{equation}
where $V$ is an integral curve, which is locally defined as a tangent in the direction of the eigenvector $r$ associated with the eigenvalue $\lambda$ of matrix $A$ in \eqref{eq:matrix_A}; for more details, see \cite{LeVeque1990}. The initial velocity of a rarefaction wave is $\lambda(U_L)$, and the final velocity is $\lambda(U_R)$. In the $x$-$t$ plane, rarefaction waves are characterized through a set of straight lines starting from the origin with slopes between $\lambda(U_L)$ and $\lambda(U_R)$ \cite{LeVeque1990}.
Notice that \eqref{eq:Rar_Geral} yields a constant solution along the lines with fixed $x/t$.
%\textcolor{red}{Suggest rewriting slightly to point out that the solution is constant along each of these straight lines (which follows trivially from (12)). Seria falar sobre as curvas integrais:??}

For the system \eqref{eq:mass_con_S}-\eqref{eq:evol_con_C}, the eigenvector of the $S$-family is $r_S=(1,0)^T$, yielding the straight line integral curve with constant $C$. Therefore, the $S$-family rarefaction waves maintain the value of $C$ constant.% and follow a Buckley-Leverett type solution.

\subsubsection{Shock and contact waves}
\label{Sub_Sec:Discontinuities}

Shock and contact waves are discontinuous solutions satisfying Rankine-Hugoniot (RH) condition, which provides the discontinuity's propagation velocity $\sigma$ given by 
\begin{eqnarray}
\label{RH} f(U_R) - f(U_L) &=& \sigma(S_R - S_L) , \label{RH-1} \\
\label{RH-2} f(U_R)C_R -  f(U_L)C_L &=& \sigma( (S_R + \mathcal{A})C_R - (S_L + \mathcal{A})C_L ) . 
\end{eqnarray}
In the $x$-$t$ plane, these waves are represented by a straight line starting from the origin with slope $\sigma$, which refers to the solution's discontinuity.

After some algebraic manipulations, Eqs.~\eqref{RH-1}-\eqref{RH-2} can be rewritten as 
\begin{align}
\label{Eq:vel_BL}
 &\sigma = (f(U_R)-f(U_L))/(S_R-S_L),\\
 (C_R-C_L)(S_Rf(U_L)&-S_Lf(U_R)-\mathcal{A}(f(U_R)-f(U_L)))= 0.
\end{align}
The last relation is satisfied if $C_L=C_R$, or
\begin{eqnarray}\label{Eq:Vel_Contact}
 \lambda_C(U_L)=\frac{f(U_L)}{S_L+\mathcal{A}}= \frac{f(U_R)}{S_R+\mathcal{A}}=\lambda_C(U_R).
\end{eqnarray}
When $C_L=C_R$, we obtain a shock that maintains the value of $C$ constant. 
We refer to the Buckley-Leverett type solutions containing shocks and rarefactions as $S$-waves. On the other hand, if $\lambda_C(U_L)=\lambda_C(U_R)$, a contact discontinuity occurs. We call the latter a $C$-wave.

Next, we analyze the behavior of the curve $\lambda_C(S,C(S)) =$ constant in the phase plane $S$-$C$. By differentiating it in relation to $S$, then applying the chain rule and isolating $d_SC(S)$, we obtain
\begin{equation}
d_SC(S) = (\lambda_C - \lambda_S)/(\partial_C f).
\end{equation}
Therefore, from the definition of sets $\mathcal{L}$ and $\mathcal{R}$ (see Eqs. \eqref{eq:set_L}-\eqref{eq:set_R}), it follows that
\begin{equation}\label{eq:lambda_c_para}
d_SC(S) < 0 \ 
\ \text{in }\mathcal{L}, \quad d_SC(S) > 0 \ \text{in }\mathcal{R}.
\end{equation}
Therefore, the curve $\lambda_C=$ constant as a function of $C(S)$ is decreasing in $\mathcal{L}$ and increasing in $\mathcal{R}$. If this curve reaches the minimum point inside the domain $I \times I$, we obtain $d_SC(S)=0$ on the transition curve $\mathcal{T}$ (i.e., the minimum is on $\mathcal{T})$. 
It should be noticed that this curve does not always intersect the transition curve $\mathcal{T}$. For example, for small values of $S$, the curve $\lambda_C=$ constant can occasionally be found exclusively in $\mathcal{L}$, as shown in Fig.~\ref{fig:LC_CTE}.
\begin{figure}[h!]
	\centering	    
	\includegraphics[height=5cm]{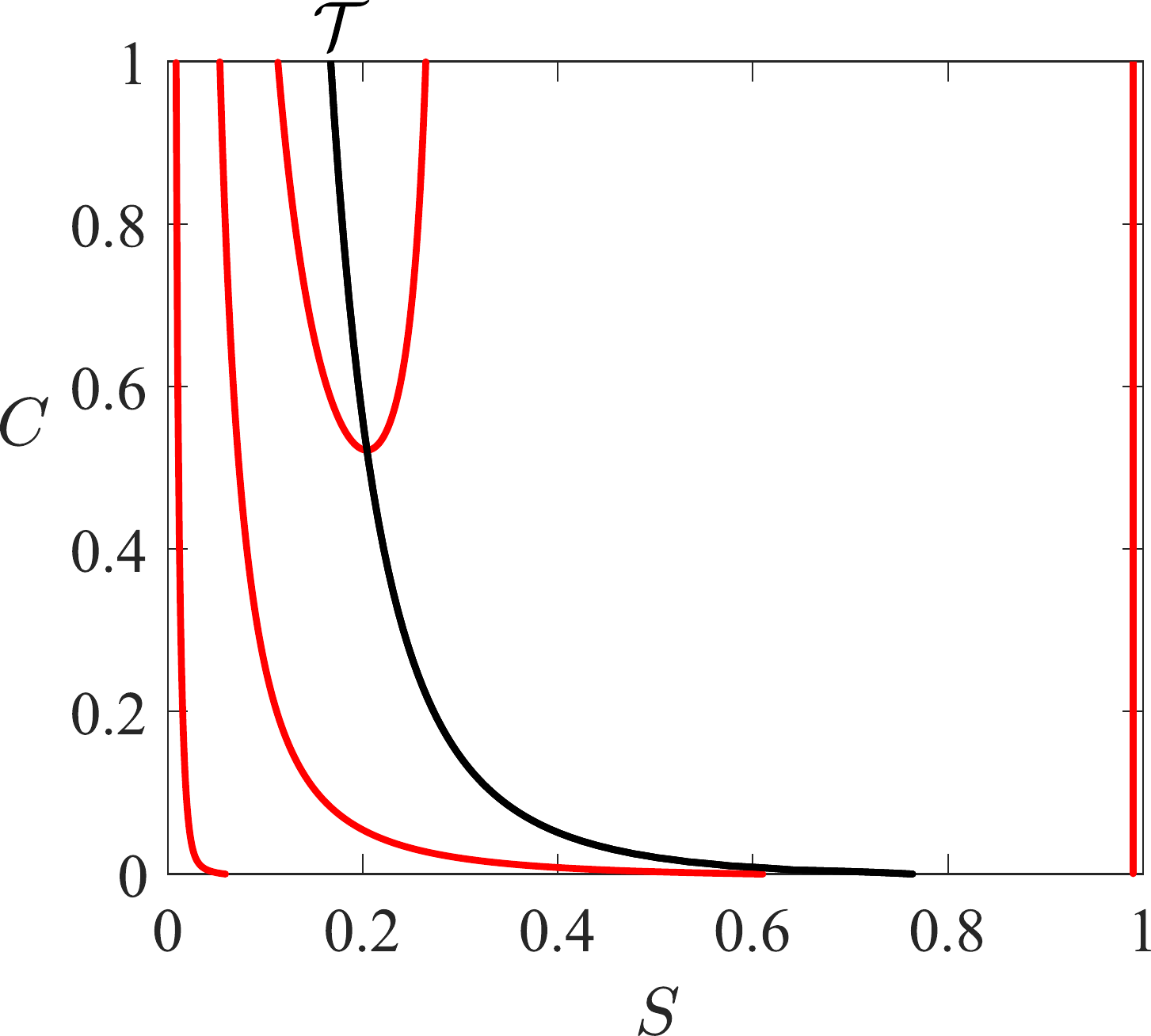}
	\caption{The phase plane with the transitional curve $\mathcal{T}$ (solid black line) and the curve $\lambda_C=$constant (solid red line) for three states $U$. This figure uses the model described in Subsection~\ref{App:A} with parameter values from Table \ref{Table:Values_parameter}.  }
	\label{fig:LC_CTE}
\end{figure}

%\giu{Talvez essa observação seja redundante. Eu já comentei anteriormente que $C_L=C_R$ recai em BL.}
\begin{remark}{\it
Once the shock wave solutions keep the value of $C$ constant, substituting $C_L=C_R$ in \eqref{RH}-\eqref{RH-2}, we obtain that the shock velocity is given by \eqref{Eq:vel_BL},
which is the Rankine-Hugoniot condition of the classical Buckley-Leverett equation.
On the other hand, if a contact wave occurs and we assume $C_L \ne C_R$ in \eqref{RH}-\eqref{RH-2}, its velocity is given by Eq.~\eqref{Eq:Vel_Contact},
or equivalently, $\sigma = \lambda_C(U_L)=\lambda_C(U_R)$.
}
\end{remark}

\subsection{Admissibility criteria}
\label{Sub_sec:Adm_Crit}

Generally, the solution of the system \eqref{eq:mass_con_S}-\eqref{eq:evol_con_C} with initial data \eqref{eq:initial_cond} is not unique. To obtain physically meaningful solutions, one uses entropy conditions \cite{Smoller1994}. In this section, we review the literature about entropy conditions and present the condition adopted in this work. Later, in Section \ref{subsec:numerical} we validate the obtained solutions through direct numerical simulations.
% In this work, we adopt the \giu{discontinuity criteria presented in \cite{isaacson1986analysis}}. In Section \ref{subsec:numerical} we also validate the obtained solutions through direct numerical simulations.

Let us consider $\sigma$ the propagation velocity of the discontinuity connecting $U_L$ and $U_R$ satisfying RH condition \eqref{RH-1}-\eqref{RH-2}. We say that a characteristic velocity $\lambda$ leaves the discontinuity if $\lambda(U_L) < \sigma$ or $\lambda(U_R)>\sigma$, and enters the discontinuity if $\lambda(U_L)>\sigma$ or $\lambda(U_R)<\sigma$.

In genuinely nonlinear fields of strictly hyperbolic systems of conservation laws, Lax's entropy condition states that a shock wave associated with the characteristic field $P$ is admissible if the $P$-characteristics enters the shock and exactly one characteristic of the other fields leaves it \cite{Lax1957_II}. For a linearly degenerate field, the contact wave is admissible if the $P$-characteristics are tangent to the discontinuity and exactly one characteristic of the other fields leaves it. For non-strictly hyperbolic systems, Keyfitz \& Kranzer \cite{keyfitz1980system} generalized Lax entropy condition, where their system possesses a linearly degenerate and a genuinely non-linear families. The entropy condition in Barkve's work \cite{Barkve1989} is equivalent to the generalized Lax entropy condition in the absence of degeneracies. Regarding polymer flooding, the works \cite{ELI, isaacson1986analysis, temple1982global} used a similar model to the one considered here. Despite one family possesses degeneracies, generalized Lax or a similar entropy condition was considered. A physically meaningful vanishing adsorption condition was presented by Petrova et al., \cite{petrova2022vanishing}, which implies the standard entropy conditions from \cite{ELI, isaacson1986analysis,keyfitz1980system}. For simplicity, in this work we use the entropy condition presented by Isaacson \& Temple \cite{isaacson1986analysis}, defined below.
\begin{definition}
    A $C$-wave is admissible if connects states on the same side of the transition curve $\mathcal{T}$. An $S$-wave is admissible if satisfies a usual entropy condition for scalar conservation laws.
\end{definition}
Once the fractional flow function $f$ possesses an inflection point, we adopt Oleinik's entropy condition for the $S$-waves. This condition states that a discontinuity is admissible if
\begin{eqnarray}\label{Eq:Oleinik}
\frac{f(U)-f(U_L)}{S-S_L} \ge \sigma \ge \frac{f(U_R)-f(U)}{S_R-S}
\end{eqnarray}
for all $U$ between $U_L$ and $U_R$, where $\sigma$ is the discontinuity velocity, given by Eq.~\eqref{Eq:vel_BL}.
% Once the system of conservation laws \eqref{eq:mass_con_S}-\eqref{eq:evol_con_C} is non-strictly hyperbolic and one of the corresponding fields presents local linear degeneracies, it is not possible to apply generalized Lax's conditions. Instead, we adopt the following entropy condition:\textcolor{red}{Adicionar os artigos da Yulia e Isaacson, Temple. Usar entropia de Isaacson e Temple}\\
% \emph{A $C$-wave is admissible if it does not cross the transition curve $\mathcal{T}$ in the phase plane. An $S$-wave is admissible if satisfies the Oleinik's entropy condition \cite{Oleinik1957}, given by
% \begin{eqnarray}\label{Eq:Oleinik}
% \frac{f(U)-f(U_L)}{S-S_L} \ge \sigma \ge \frac{f(U_R)-f(U)}{S_R-S}
% \end{eqnarray}
% for all $U$ between $U_L$ and $U_R$, where $\sigma$ is the discontinuity velocity, given by Eq.~\eqref{Eq:vel_BL}.}
Geometrically, Oleinik's entropy condition states that if $S_L < S_R$ (or $S_L>S_R$), the shock is admissible if the secant line connecting $(S_L,f(U_L))$ and $(S_R,f(U_R))$ lies above (or below) the arc of the graphic of $f$ with endpoints at $(S_L,f(U_L))$ and $(S_R,f(U_R))$ \cite{dafermos2005hyperbolic}. We emphasize that applying Olenik's entropy condition for the $S$ characteristics field is possible because all waves in this field keep the value o $C$ constant.

If $S$ tends to $S_L$ or $S_R$ in Eq.~\eqref{Eq:Oleinik}, we obtain the following relation (see \cite{dafermos2005hyperbolic} for further details):
\begin{eqnarray}\label{Eq:Lax}
\lambda_S(U_R) \le \sigma \le \lambda_S(U_L).
\end{eqnarray}
Therefore, the $S$-wave velocity is at least $\lambda_S(U_R)$ and at most $\lambda_S(U_L)$; the same result was obtained in \cite{ELI}.

\section{Construction of the Riemann solution}
\label{sec:RP_construction}

In this section, we present conditions for the existence of a solution to the Riemann problem \eqref{eq:mass_con_S}-\eqref{eq:evol_con_C}, and \eqref{eq:initial_cond}. The proof is done by constructing the solution as a sequence of waves connecting steady states. A similar construction was presented in \cite{ELI, isaacson1986analysis, johansen1988, temple1982global}.

We denote by $U_1 \xrightarrow{\quad a \quad}
U_2$, an $a$-wave connecting the initial state $U_1$ to the final state $U_2$. For the wave sequence $U_1 \xrightarrow{\quad a \quad}
U_2 \xrightarrow{\quad b \quad} U_3$,
we denote by $v_i^a$ the initial wave velocity of the $a$-wave and $v_f^b$ the final wave velocity of the $b$-wave. In this case, the wave sequence is said to be \emph{compatible} if and only if
\begin{eqnarray}\label{Cond_Compatibilidade}
v_f^a \le v_i^b.
\end{eqnarray}

\begin{theorem}\label{teo:RP}
For arbitrary states $U_L \in I\times I$ and $U_R \in I\times I$, there exists a finite sequence of compatible $S$ and $C$-waves providing a solution of the Riemann problem \eqref{eq:mass_con_S}-\eqref{eq:evol_con_C} with left state $U_L$ and right state $U_R$. The solution is not unique in the phase plane, but unique in the $x$-$t$ plane.
\end{theorem}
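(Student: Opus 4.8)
The plan is to build the solution as a composite wave curve, exploiting the way the two families decouple: by Subsection~\ref{Sub_Sec:Rarefaction} and the analysis following \eqref{Eq:vel_BL} every admissible $S$-wave keeps $C$ fixed and is governed by the scalar Buckley--Leverett theory, whereas every admissible $C$-wave is a contact that travels along a level set of $\lambda_C$ and, by the admissibility criterion in Subsection~\ref{Sub_sec:Adm_Crit}, stays on one side of the transition curve $\mathcal{T}$. I would therefore seek a solution of the canonical form
\[
U_L \xrightarrow{\ S\ } U_L' \xrightarrow{\ C\ } U_R' \xrightarrow{\ S\ } U_R,
\]
where $U_L'=(S_1,C_L)$ and $U_R'=(S_2,C_R)$ obey $\lambda_C(U_L')=\lambda_C(U_R')=:\sigma^*$ and lie on the same side of $\mathcal{T}$, so that the middle wave is an admissible contact of speed $\sigma^*$; degenerate data will simply collapse one or both flanking $S$-waves.

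First I would assemble the building blocks. For fixed $C$, properties $a)$ and $b)$ make $f(\cdot,C)$ an S-shaped flux, so the Oleinik construction of Subsection~\ref{Sub_sec:Adm_Crit} yields, for any pair $(S_L,C)\to(S_R,C)$, an admissible $S$-wave (shock, rarefaction, or composite) whose speeds lie in the interval $[\lambda_S(U_R),\lambda_S(U_L)]$ recorded in \eqref{Eq:Lax}. For the contact I would use the monotonicity \eqref{eq:lambda_c_para}: along $\{\lambda_C=\text{const}\}$ the concentration is strictly decreasing in $\mathcal{L}$ and strictly increasing in $\mathcal{R}$, so on a fixed side of $\mathcal{T}$ each level curve is a monotone graph over $C$, and prescribing the side together with the value $\sigma^*$ pins down the endpoints $U_L'$, $U_R'$ continuously. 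This is what lets me treat $\sigma^*$ (equivalently $S_1$) as a single shooting parameter.

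The core of the argument is then a matching step: as $\sigma^*$ ranges over its admissible interval, the exit speed of the first $S$-wave and the entrance speed of the second vary continuously, and I must show that the compatibility inequalities \eqref{Cond_Compatibilidade}, namely $v_f^{S}\le\sigma^*\le v_i^{S}$, can be satisfied simultaneously. The structural fact that makes this possible is that $\lambda_S=\lambda_C$ on $\mathcal{T}$ (Proposition~\ref{pro:Transition_Lemma}), so the contact speed agrees with the $S$-characteristic speed exactly where a level curve meets $\mathcal{T}$, while the sign of $\lambda_S-\lambda_C$ in $\mathcal{L}$ versus $\mathcal{R}$ (Eqs.~\eqref{eq:set_L}--\eqref{eq:set_R}) forces the velocities into the correct order on either side. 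I expect this matching, carried out over the full case analysis by position of $U_L$, $U_R$ relative to $\mathcal{T}$ and by the sign of $C_R-C_L$, to be the main obstacle: the awkward cases are those in which the natural level curve would cross $\mathcal{T}$ (so the contact must be seated at the minimum on $\mathcal{T}$, where its speed equals the flanking characteristic speeds) and those in which a flanking $S$-wave degenerates; in each case compatibility and admissibility must be checked by hand using the geometry of Fig.~\ref{fig:regions}.

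Finally, for the uniqueness statement I would separate the two notions. Non-uniqueness in the phase plane is only apparent: a single contact of speed $\sigma^*$ may be written either as one jump $U_L'\to U_R'$ or as a chain of intermediate jumps along $\{\lambda_C=\sigma^*\}$, all carrying the identical speed $\sigma^*$, and an $S$-wave may likewise be subdivided. To obtain uniqueness in the $x$-$t$ plane I would note that the constructed sequence has nondecreasing speeds, so the self-similar solution is constant along each ray $x/t=\xi$ and the rays are filled without gaps or overlaps; the contact then occupies the single ray $\xi=\sigma^*$ irrespective of how its phase-plane path is partitioned, and the Oleinik condition fixes the $S$-wave profiles uniquely. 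Thus every admissible phase-plane representation collapses to the same function $U(x,t)$, which is the asserted uniqueness in $x$-$t$.
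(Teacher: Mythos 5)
Your proposal is correct in outline and takes essentially the same route as the paper: the paper's proof is precisely this wave-curve construction, formalized in Lemmas~\ref{Lemma-1}--\ref{Lemma:Classification} as compatible sequences $S$--$C$--$S$ (with degenerate flanks giving the $C$--$S$ and $S$--$C$ cases), classified by the position of $U_L$, $U_R$ relative to $\mathcal{T}$ and the sign of $C_R-C_L$, and with uniqueness in the $x$-$t$ plane argued in Section~\ref{Sec:Uniqueness} by the same mechanism you describe (the competing phase-plane sequences have coinciding shock and contact speeds, so they occupy the same rays and yield one function $U(x,t)$). The one substantive difference is that your shooting parameter $\sigma^*$ is not actually free: the compatibility inequalities, the velocity bound \eqref{Eq:Lax}, and the sign of $\lambda_S-\lambda_C$ on each side of $\mathcal{T}$ force the contact in any genuine $S$--$C$--$S$ sequence to be anchored at $\mathcal{T}\cap\{C=C_L\}$ (when $C_L<C_R$, Lemma~\ref{Lemma-D1}) or at $\mathcal{T}\cap\{C=C_R\}$ (when $C_R<C_L$, Lemma~\ref{Lemma-D2}), so the paper obtains all intermediate states as explicit intersections of level curves of $\lambda_C$, constant-$C$ lines, and $\mathcal{T}$ rather than by a continuity argument --- what you call the ``awkward case'' is in fact the only case.
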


The proof of Theorem~\ref{teo:RP} follows from the auxiliary lemmas below. In addition to Lemmas~\ref{Lemma-1}-\ref{Lemma-D2}, we present a phase-plane classification in Lemma~\ref{Lemma:Classification} identifying sets of compatible wave sequences that fill the phase plane.
The proof of the solution's uniqueness in the $x$-$t$ plane and examples of non-uniqueness are presented in Section~\ref{Sec:Uniqueness}.

From now on, we refer to the injection and initial conditions of the Riemann problem as $U_L$ and $U_R$, respectively.
For a given state $U$, we define $S^k = S^k(U)$ as a set of possible values of $S$ satisfying
\begin{equation}\label{S_k}
\lambda_C(U) = \lambda_C(S^k,C).
\end{equation}
Geometrically, relation \eqref{S_k} implies that both states are in the same fractional flow function, and the secant line that connects them to $(-\mathcal{A},0)$ is the same, see Fig.~\ref{fig:S_k}. If there exists $S^k$ satisfying \eqref{S_k}, the states $U$ and $(S^k(U),C)$ are in opposite sides of the transitional curve $\mathcal{T}$ (see Figs.~\ref{fig:S_k} and\ref{fig:diagram}). In case such $S^k(U)$ does not exist, we assume $S^k(U) = + \infty$. If $S = S^*(C)$, we take $S^k(U) = S^*$ ($S^*$ was defined in Prop.~\ref{pro:Transition_Lemma}). 
\begin{figure}[h!]
	\centering	    
	\includegraphics[height=5cm]{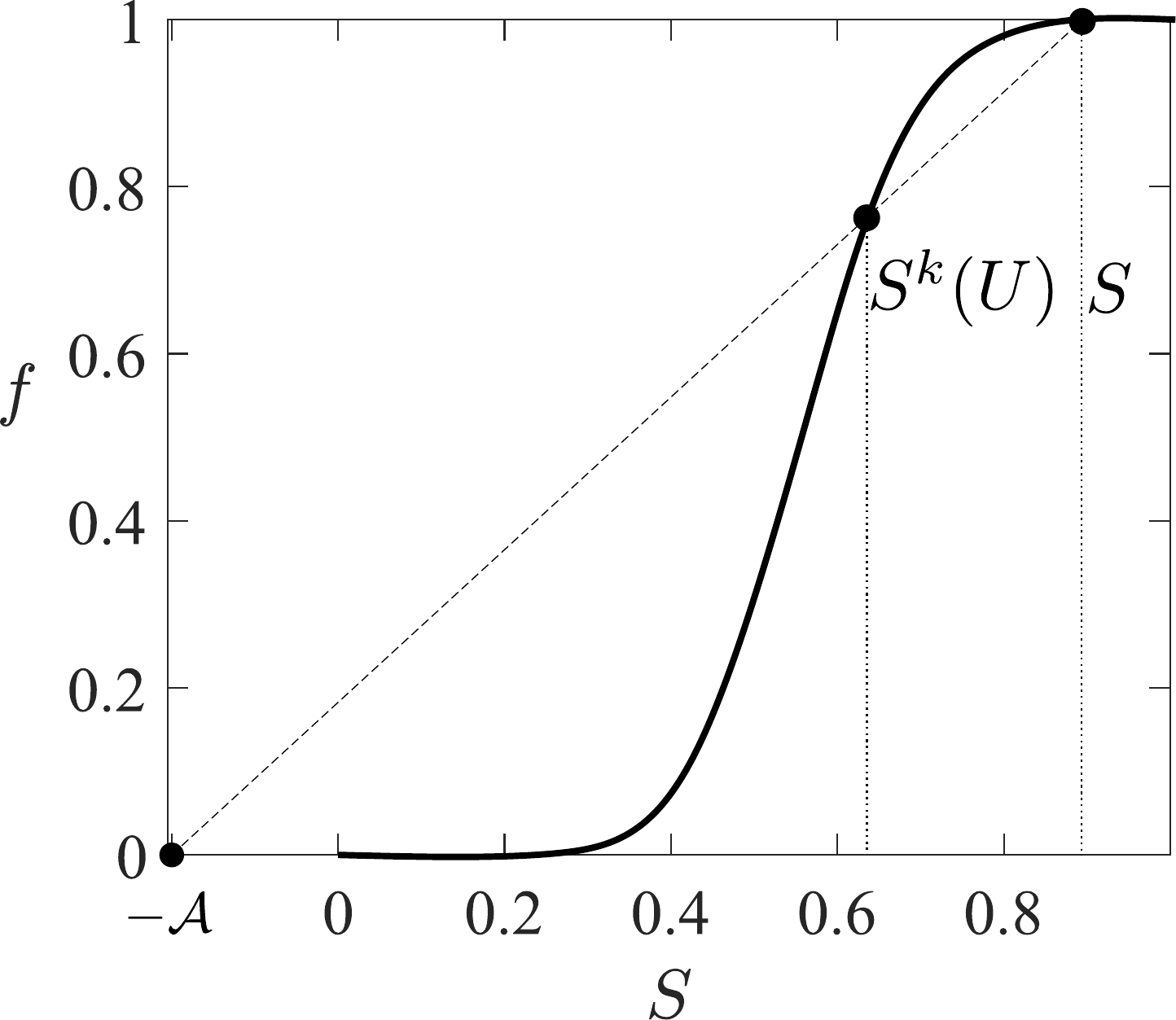}\hfill
	\includegraphics[height=5cm]{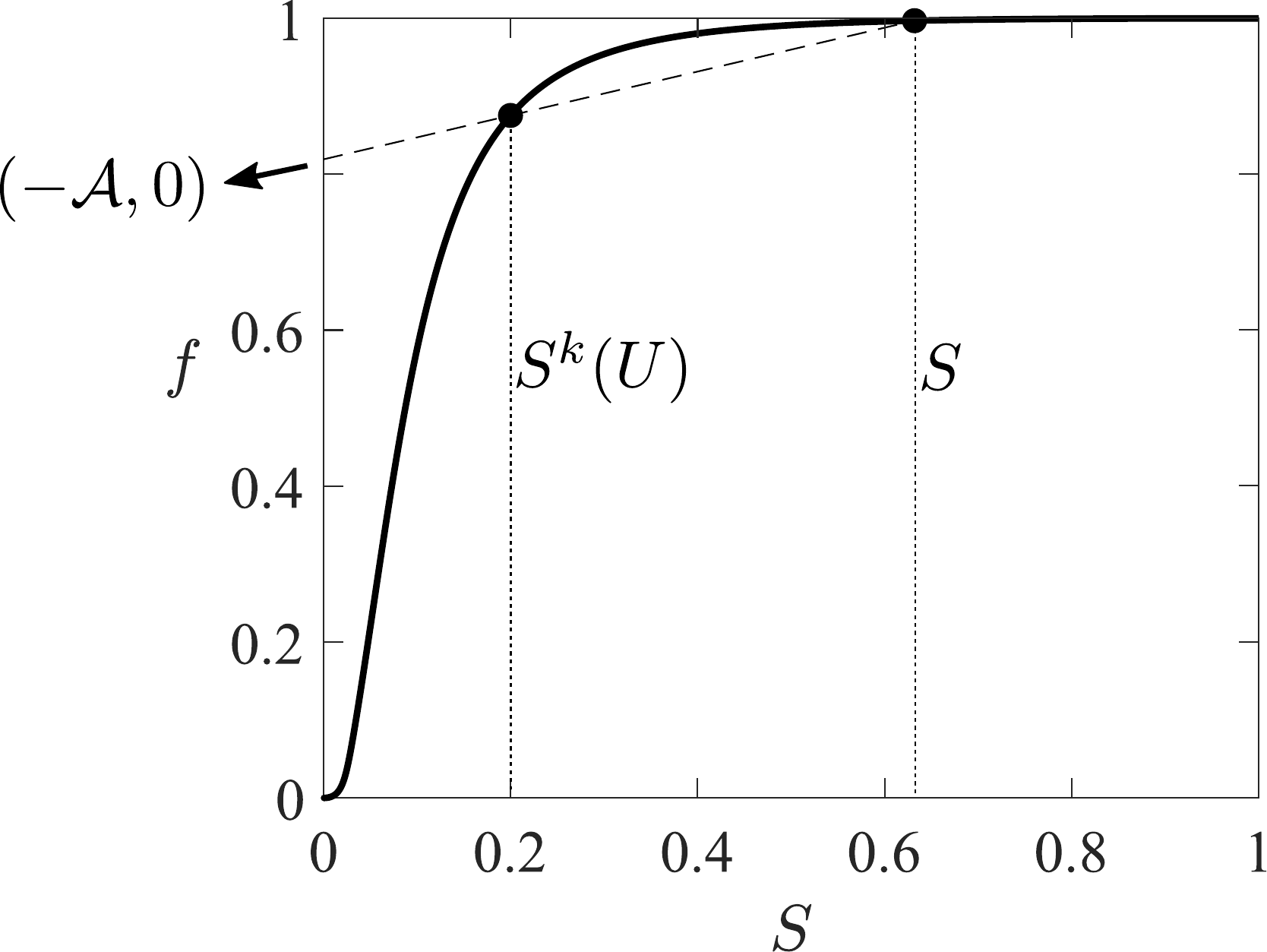}
	\caption{Geometric representation of $S^k$ for a fixed state $U$. It is the intersection between the dashed-line connecting $(-\mathcal{A}, 0)$ to $(S,f(U))$ and the fractional flow function $f(\cdot,C)$. On the left, we present a schematic representation for clarity. On the right, we use the model described in Subsection~\ref{App:A} with parameter values from Table \ref{Table:Values_parameter}.}
	\label{fig:S_k}	
\end{figure}

Hereafter, all figures use the fractional flow function for the model described in Subsection~\ref{App:A} with parameter values from Table \ref{Table:Values_parameter}.

\begin{lemma}\label{Lemma-1}
Let us consider the wave sequence
\begin{eqnarray}
U_L \xrightarrow{\quad C \quad}
U_M \xrightarrow{\quad S \quad}
U_R,
\end{eqnarray}
with $U_L \ne U_R$. This wave sequence is compatible if and only if $U_M \in \mathcal{L}\cup \mathcal{T}$ and $0\le S_R \le S^k(U_M)$.
\end{lemma}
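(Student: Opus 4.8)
The plan is to reduce compatibility to a single velocity inequality and then read it off geometrically from the definition of $S^k(U_M)$. The first wave is a $C$-wave (a contact), so its velocity is constant and equal to $\lambda_C(U_M)=\lambda_C(U_L)$ by \eqref{Eq:Vel_Contact}; hence $v_f^C=\lambda_C(U_M)$. The second is an $S$-wave, which keeps $C\equiv C_M=C_R$ constant and is the Oleinik solution of the scalar law with flux $f(\cdot,C_M)$; its initial (leftmost, hence slowest) velocity $v_i^S$ is the slope of the first segment of the relevant convex/concave envelope of $f(\cdot,C_M)$ on the interval between $S_M$ and $S_R$. By \eqref{Cond_Compatibilidade} the sequence is compatible iff $\lambda_C(U_M)\le v_i^S$, so the whole argument reduces to comparing $v_i^S$ with $\lambda_C(U_M)$.

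First I would dispose of the side condition. The leftmost envelope slope never exceeds the characteristic speed at the left state, i.e. $v_i^S\le\lambda_S(U_M)$ (for $S_R>S_M$ this is the limit of the secant slope as $S\to S_M^+$; for $S_R<S_M$ the leftmost wave is a rarefaction of speed $\lambda_S(U_M)$ or a slower shock). Consequently, if $U_M\in\mathcal R$ then $\lambda_S(U_M)<\lambda_C(U_M)$ by \eqref{eq:set_R}, forcing $v_i^S<v_f^C$; compatibility therefore requires $U_M\in\mathcal L\cup\mathcal T$.

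The core step assumes $U_M\in\mathcal L\cup\mathcal T$ and treats $S_R>S_M$, where $v_i^S=\inf_{S\in(S_M,S_R]}g(S)$ with $g(S)=(f(S,C_M)-f(U_M))/(S-S_M)$ the secant slope from $(S_M,f(U_M))$. The key identity is that $g(S)\ge\lambda_C(U_M)$ is equivalent to $\lambda_C(S,C_M)\ge\lambda_C(U_M)$, because the line of slope $\lambda_C(U_M)$ through $(S_M,f(U_M))$ is exactly the line through $(-\mathcal A,0)$ appearing in the definition \eqref{S_k} of $S^k$. Using that $\lambda_C(\cdot,C_M)$ increases on $(0,S^*)$ and decreases on $(S^*,1)$ — which follows from $d_S\lambda_C=(\lambda_S-\lambda_C)/(S+\mathcal A)$ together with Proposition~\ref{pro:Transition_Lemma} — the superlevel set $\{S:\lambda_C(S,C_M)\ge\lambda_C(U_M)\}$ is exactly $[S_M,S^k]$ (and is $[S_M,1]$ when $S^k=+\infty$, making the bound vacuous). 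Hence $v_i^S\ge\lambda_C(U_M)$ iff $g\ge\lambda_C(U_M)$ on all of $(S_M,S_R]$ iff $S_R\le S^k$, with equality $v_i^S=\lambda_C(U_M)$ precisely at $S_R=S^k$. For the remaining range $S_R\le S_M$ I would check directly that $v_i^S\ge\lambda_C(U_M)$: the left endpoint $(S_R,f(U_R))$ lies below that same line, so the adjacent shock (convex side) has secant slope exceeding $\lambda_C(U_M)$, while the adjacent rarefaction (concave side) has speed $\lambda_S(U_M)\ge\lambda_C(U_M)$; moreover $S_R\le S_M\le S^*\le S^k$ holds automatically, so the bound $0\le S_R\le S^k$ is met. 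Combining the two ranges yields compatibility iff $0\le S_R\le S^k(U_M)$.

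The delicate point — and the one I would write out in full — is the evaluation of $v_i^S$ at the threshold. Even when $S_M$ lies on the convex side of $f(\cdot,C_M)$, so one might expect a leading rarefaction of speed $\lambda_S(U_M)>\lambda_C(U_M)$, the global requirement that the lower convex envelope reach the endpoint $(S^k,f(S^k))$ forces the leading wave at $S_R=S^k$ to be a single \emph{transitional} shock of speed exactly $\lambda_C(U_M)$; the chord through $(-\mathcal A,0)$ lies below the graph on $(S_M,S^k)$, so Oleinik's condition \eqref{Eq:Oleinik} is satisfied. Getting this envelope bookkeeping right — namely that $v_i^S$ is the infimum of secant slopes rather than simply $\lambda_S(U_M)$ — is what makes the threshold land precisely at $S^k$ and is the main obstacle; once it is in place, the monotonicity of $\lambda_C(\cdot,C_M)$ closes both directions of the equivalence.
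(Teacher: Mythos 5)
Your proposal is correct and takes essentially the same route as the paper's (much terser) proof: compatibility is reduced to the velocity chain $\lambda_C(U_M)=v_f^C\le v_i^S\le\lambda_S(U_M)$, whose left-hand inequality forces $U_M\in\mathcal{L}\cup\mathcal{T}$, and the analysis of the $S$-wave's initial velocity then yields $0\le S_R\le S^k(U_M)$. The envelope bookkeeping you spell out (identifying $v_i^S$ with the infimum of secant slopes and comparing against the line through $(-\mathcal{A},0)$, using the unimodality of $\lambda_C(\cdot,C_M)$ from Proposition~\ref{pro:Transition_Lemma}) is exactly the "analysis of possible initial velocities" that the paper asserts without detail.
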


\begin{proof}
The wave sequence is compatible if $\lambda_C(U_M) = v_f^c\le v_i^S \le \lambda_S(U_M)$, implying that it is necessary to possess $U_M \in \mathcal{L}\cup \mathcal{T}$. Analyzing the possible initial velocities of the $S$-wave, we conclude that the additional condition to guarantee the wave compatibility is that $0\le S_R \le S^k(U_M)$. Therefore, the solution is compatible if, and only if $U_M \in \mathcal{L}\cup \mathcal{T}$ and $0\le S_R \le S^k(U_M)$. %\giu{Adiciono uma das figuras que comparam o $\lambda_C(U_M)$ com a velocidade do choque????}
% Let us consider the $C$-wave connecting $U_L$ to $U_M$. 
% Notice that a contact discontinuity is the only wave that changes the value of $C$ (see Sec.~\ref{Sub_sec:Adm_Crit}). Therefore, it is possible to identify the coordinates of the intermediate state $U_M$ as $(S_M,C_R)$. There are two possibilities for the initial $S$-wave velocity:
% \begin{itemize}
%      \item $v_i^S$ is the shock wave velocity satisfying \eqref{Eq:vel_BL}.
% Assuming the existence of $S^k(U_M)$ (in the sense of Eq. \eqref{S_k}), the following inequality
% \begin{eqnarray}\label{Eq:RHM}
% \lambda_C(U_M) = \frac{f(S^k,C_R)-f(U_M)}{S^k-S_M} \le \frac{f(U_R) - f(U_M) }{ S_R - S_M} = v_i^S,
% \end{eqnarray}
% is satisfied if and only if $U_M \in \mathcal{L} \cup \mathcal{T}$ and $0\le S_R \le S^k(U_M) $, see Fig.~\ref{fig:lemma1_2}. 
% Whether $S^k(U_M) = + \infty$, the wave sequence in compatible and $S_R$ is always less than $S^k(U_M)$ (see Fig.~\ref{fig:lemma1_3}).
% \item $v_i^S$ corresponds to a rarefaction wave velocity, then $\lambda_C(U_M) = v_f^C \le v_i^S=\lambda_S(U_M)$ is satisfied if and only if $U_M \in \mathcal{L}\cup\mathcal{T}$. We highlight that the condition $0\le S_R \le S^k(U_M) $ is satisfied by Oleinik's entropy condition \eqref{Eq:Oleinik}.
% \end{itemize}
% In both cases, is compatible if and only if $U_M \in \mathcal{L}\cup \mathcal{T}$ and $0\le S_R \le S^k(U_M)$.
\end{proof}

% \begin{figure}[h!]
% 	\centering	    
% 	\includegraphics[height=5cm]{Lema_1_1.pdf}\hfill		
%  \includegraphics[height=5cm]{Lema_1_2.pdf}
% 	\caption{Geometric comparison between the final velocity of the $C$-wave (slope of the dashed black line) and the initial velocity of the $S$-wave (slope of the red line), see Eq.~\eqref{Eq:RHM}. On the left, we plot the case $0 \le S_R \le S^k(U_M)$. On the right, we plot the case $ S_R > S^k(U_M)$.}
% 	\label{fig:lemma1_2}	
% \end{figure}

% \begin{figure}[h!]
% 	\centering	    
% 	\includegraphics[height=6cm]{Lema_1_3.pdf}
% 	\caption{Geometrical representation of the case where $S^k(U_M) = + \infty$. The black dashed line refers to $\lambda_C(U_M)$ and the red dashed line refers to shock velocity $v_i^S$, as in Eq.~\eqref{Eq:RHM}.
% 	}
% 	\label{fig:lemma1_3}
% \end{figure}

\begin{lemma}\label{Lemma-2}
		Let us consider the wave sequence
		\begin{eqnarray}
		U_L \xrightarrow{\quad S \quad}
		U_M \xrightarrow{\quad C \quad}
		U_R,
		\end{eqnarray}
		with $U_L \ne U_R$. The wave sequence is compatible if and only if $U_M \in \mathcal{R}\cup \mathcal{T}$ and $S^k(U_M) \le S_L \le 1$.
\end{lemma}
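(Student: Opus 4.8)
The plan is to mirror the proof of Lemma~\ref{Lemma-1}, interchanging the two waves and the roles of $\mathcal{L}$ and $\mathcal{R}$. By the compatibility criterion \eqref{Cond_Compatibilidade}, the sequence $U_L \xrightarrow{S} U_M \xrightarrow{C} U_R$ is compatible if and only if $v_f^S \le v_i^C$. Since the second wave is a $C$-wave (contact discontinuity), its initial velocity is the contact speed $v_i^C = \lambda_C(U_M) = \lambda_C(U_R)$ from \eqref{Eq:Vel_Contact}. Hence the whole argument reduces to controlling the \emph{final} velocity $v_f^S$ of the $S$-wave joining $U_L$ to $U_M$, recalling that an $S$-wave keeps $C$ fixed, so $C_L = C_M$ and the analysis takes place on the single fractional-flow profile $f(\cdot,C_M)$.

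First I would establish the necessity of $U_M \in \mathcal{R}\cup\mathcal{T}$. Applying the Lax-type inequality \eqref{Eq:Lax} to the leading wavelet of the $S$-wave, which arrives at the right state $U_M$, gives $v_f^S \ge \lambda_S(U_M)$ (with equality exactly when this wavelet is a rarefaction). Combined with the compatibility requirement $v_f^S \le \lambda_C(U_M)$, this forces $\lambda_S(U_M) \le \lambda_C(U_M)$, i.e.\ $U_M \in \mathcal{R}\cup\mathcal{T}$ by \eqref{eq:transition_curve} and \eqref{eq:set_R}.

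For the bound on $S_L$ I would track $v_f^S$ as a function of $S_L$ through Oleinik's construction \eqref{Eq:Oleinik}. The central observation is that at $S_L = S^k(U_M)$ the admissible $S$-wave from $S^k(U_M)$ to $U_M$ is a single shock of speed exactly $\lambda_C(U_M)$: by the definition \eqref{S_k} of $S^k$, the points $(S^k(U_M),f(S^k(U_M),C_M))$ and $(S_M,f(U_M))$ lie on the same line through $(-\mathcal{A},0)$ of slope $\lambda_C(U_M)$, this secant meets the graph of $f(\cdot,C_M)$ only at those two saturations, and the single shock connecting them satisfies \eqref{Eq:Oleinik}, so by \eqref{Eq:vel_BL} its speed is $\lambda_C(U_M)$. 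Since $v_f^S$ is non-increasing in $S_L$ (a larger left saturation can only slow the leading wavelet reaching $U_M$, with $v_f^S = \lambda_S(U_M) \le \lambda_C(U_M)$ as soon as $S_L \ge S_M$, where the wave is a pure rarefaction in the concave part of $f$), it follows that $v_f^S \le \lambda_C(U_M)$ precisely when $S_L \ge S^k(U_M)$. Together with the trivial $S_L \le 1$, this yields the stated equivalence and, with the necessity from the previous paragraph, both directions.

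The hard part will be the monotonicity claim together with the identification $v_f^S(S^k(U_M)) = \lambda_C(U_M)$ in full generality, because the $S$-wave may be a shock, a rarefaction, or a rarefaction--shock composite depending on the relative positions of $S_L$, the inflection point $S^i(C_M)$ from Proposition~\ref{pro:Transition_Lemma}, and $S^k(U_M)$. I would settle this by splitting into the cases $S_L \ge S_M$, $S^k(U_M) \le S_L < S_M$, and $S_L < S^k(U_M)$, computing $v_f^S$ in each from the convex envelope of $f(\cdot,C_M)$ on the relevant interval and comparing the resulting chord or tangent slope with the secant slope $\lambda_C(U_M)$ through $(-\mathcal{A},0)$; the convex--concave shape guaranteed by property $b)$ ensures $v_f^S$ varies monotonically across these regimes and crosses the threshold exactly at $S^k(U_M)$. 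As in Lemma~\ref{Lemma-1}, the change of variable $\overline{C}=1-C$ lets me import the remaining routine estimates from \cite{ELI, isaacson1986analysis}.
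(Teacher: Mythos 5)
Your proposal is correct and follows essentially the same route as the paper's (very terse) proof: necessity of $U_M \in \mathcal{R}\cup\mathcal{T}$ from the velocity chain $\lambda_S(U_M) \le v_f^S \le v_i^C = \lambda_C(U_M)$, followed by an analysis of the $S$-wave final velocity as a function of $S_L$ with the threshold identified at $S^k(U_M)$, where the single shock has speed exactly $\lambda_C(U_M)$. The paper merely asserts the outcome of this analysis by analogy with Lemma~\ref{Lemma-1}, so your convex-envelope case splitting is a faithful (and more detailed) filling-in of the intended argument rather than a different method.
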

	
\begin{proof} 
Analogous to the previous lemma, a necessary condition to guarantee the wave compatibility is that $U_M \in \mathcal{R}\cup\mathcal{T}$.
Performing an analysis of the $S$-wave final velocity %(analogous to the one made in Lemma~\ref{Lemma-1})
, we obtain that the wave sequence is compatible if and only if $U_M \in \mathcal{R}\cup\mathcal{T}$ and $S^k(U_M)\le S_L\le 1$. %In this case, $S^k(U_M)$ always exists because $U_M \in \mathcal{R}\cup\mathcal{T}$.
\end{proof}

The following lemmas analyze the possible sequences of $S$-wave, followed by a $C$-wave and an $S$-wave again for $C_L<C_R$ and $C_R<C_L$, respectively. We omit the proofs as they are an application of lemmas \ref{Lemma-1} and \ref{Lemma-2}.

\begin{lemma}\label{Lemma-D1}
    Let us consider $C_L < C_R$ and the wave sequence
    \begin{eqnarray}
    U_L \xrightarrow{\quad S_1 \quad}
    U_1 \xrightarrow{\quad C \quad}
    U_2 \xrightarrow{\quad S_2 \quad}
    U_R,
    \end{eqnarray}
    with $U_L \ne U_R$. This wave sequence is compatible if and only if all the following conditions are satisfied: $U_L\in\mathcal{R}$, $U_1\in\mathcal{T}$, $U_2\in\mathcal{L}$ and $0\le S_R\le S^k(U_2)$.
\end{lemma}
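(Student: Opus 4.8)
The plan is to read the three-wave sequence as two overlapping consecutive pairs and invoke Lemmas~\ref{Lemma-1} and~\ref{Lemma-2} on each. First I would record the kinematic bookkeeping: the two $S$-waves keep $C$ constant, so $C_L=C_1$ and $C_2=C_R$, and the hypothesis $C_L<C_R$ becomes $C_1<C_2$. Moreover a $C$-wave is a contact discontinuity of constant speed $\lambda_C(U_1)=\lambda_C(U_2)$, so $v_i^C=v_f^C$; hence the whole sequence is compatible if and only if the two consecutive pairs $U_L\xrightarrow{\ S_1\ }U_1\xrightarrow{\ C\ }U_2$ and $U_1\xrightarrow{\ C\ }U_2\xrightarrow{\ S_2\ }U_R$ are each compatible.

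Applying Lemma~\ref{Lemma-2} to the first pair (an $S$-wave followed by a $C$-wave) gives $U_1\in\mathcal{R}\cup\mathcal{T}$ together with $S^k(U_1)\le S_L\le 1$, while applying Lemma~\ref{Lemma-1} to the second pair (a $C$-wave followed by an $S$-wave) gives $U_2\in\mathcal{L}\cup\mathcal{T}$ together with $0\le S_R\le S^k(U_2)$. This already yields the last inequality of the statement, so what remains is to sharpen the two membership conditions to $U_1\in\mathcal{T}$ and $U_2\in\mathcal{L}$, and to promote $U_1\in\mathcal{R}\cup\mathcal{T}$ with $S^k(U_1)\le S_L$ into $U_L\in\mathcal{R}$.

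The sharpening is where the real work lies, and I would carry it out using the geometry of the level curve $\lambda_C=$\,constant. Since $U_1$ and $U_2$ lie on one such curve and, by~\eqref{eq:lambda_c_para}, $C(S)$ is decreasing in $\mathcal{L}$ and increasing in $\mathcal{R}$ with its minimum attained exactly on $\mathcal{T}$, the admissibility of the $C$-wave (both endpoints on the same side of $\mathcal{T}$) combined with $U_1\in\mathcal{R}\cup\mathcal{T}$, $U_2\in\mathcal{L}\cup\mathcal{T}$ and $C_1<C_2$ leaves only one viable configuration: $U_1$ sits at the minimum, i.e.\ $U_1\in\mathcal{T}$, and $U_2$ lies strictly above it on the $\mathcal{L}$-branch, i.e.\ $U_2\in\mathcal{L}$. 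The alternatives are excluded because $U_2\in\mathcal{T}$ would force $C_2\le C_1$, while $U_1\in\mathcal{R}$ with $U_2\in\mathcal{L}$ would place the endpoints on opposite sides of $\mathcal{T}$. With $U_1\in\mathcal{T}$ the convention $S^k(U_1)=S_1$ turns the bound from Lemma~\ref{Lemma-2} into $S_1\le S_L$; since the first wave is a genuine $S$-wave we have $U_L\ne U_1$, hence $S_L>S_1=S^*(C_1)$ and therefore $U_L\in\mathcal{R}$ by the sign analysis in Proposition~\ref{pro:Transition_Lemma}. The converse is then routine verification: assuming $U_L\in\mathcal{R}$, $U_1\in\mathcal{T}$, $U_2\in\mathcal{L}$ and $0\le S_R\le S^k(U_2)$, one checks directly that the hypotheses of Lemmas~\ref{Lemma-2} and~\ref{Lemma-1} hold for the two pairs and that the $C$-wave joins two points of $\mathcal{L}\cup\mathcal{T}$, so every consecutive pair is compatible and hence so is the whole sequence.
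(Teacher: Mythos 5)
Your proof is correct and takes exactly the route the paper intends: the paper omits the proof of Lemma~\ref{Lemma-D1}, stating only that it is ``an application of lemmas~\ref{Lemma-1} and \ref{Lemma-2}'', which is precisely your decomposition into the two overlapping pairs $U_L \xrightarrow{S_1} U_1 \xrightarrow{C} U_2$ and $U_1 \xrightarrow{C} U_2 \xrightarrow{S_2} U_R$. Your sharpening step---using the level-curve monotonicity \eqref{eq:lambda_c_para}, the same-side admissibility of the $C$-wave, and $C_L<C_R$ to force $U_1\in\mathcal{T}$ and $U_2\in\mathcal{L}$, and then the convention $S^k(U_1)=S^*$ together with the nontriviality of the first $S$-wave to conclude $U_L\in\mathcal{R}$---correctly supplies the details the paper leaves out.
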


\begin{lemma}\label{Lemma-D2}
    Let us consider $C_R < C_L$ and the wave sequence
    \begin{eqnarray}
    U_L \xrightarrow{\quad S_1 \quad}
    U_1 \xrightarrow{\quad C \quad}
    U_2 \xrightarrow{\quad S_2 \quad}
    U_R,
    \end{eqnarray}
    with $U_L \ne U_R$. The wave sequence is compatible if and only if all the following conditions are satisfied: $S^k(U_1) \le S_L \le 1$, $U_1\in\mathcal{R}$, $U_2\in\mathcal{T}$ and $U_R \in \mathcal{L}$.
\end{lemma}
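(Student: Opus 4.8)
The plan is to split the three-wave sequence at its central $C$-wave and reduce everything to the two already-proved two-wave lemmas. By the compatibility definition in \eqref{Cond_Compatibilidade}, the sequence $U_L \xrightarrow{\,S_1\,} U_1 \xrightarrow{\,C\,} U_2 \xrightarrow{\,S_2\,} U_R$ is compatible exactly when both consecutive junctions satisfy $v_f^{S_1}\le v_i^{C}$ and $v_f^{C}\le v_i^{S_2}$. The first of these is the compatibility of the sub-sequence $U_L \xrightarrow{\,S_1\,} U_1 \xrightarrow{\,C\,} U_2$, which is of the type covered by Lemma~\ref{Lemma-2} with middle state $U_1$; the second is the compatibility of $U_1 \xrightarrow{\,C\,} U_2 \xrightarrow{\,S_2\,} U_R$, of the type covered by Lemma~\ref{Lemma-1} with middle state $U_2$. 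Since $S$-waves keep $C$ constant, I would first note $C_{U_1}=C_L$ and $C_{U_2}=C_R$, so the central $C$-wave carries the concentration from $C_L$ down to $C_R$.

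For the \emph{if} direction I would assume the four conditions and reassemble. Lemma~\ref{Lemma-2} applies to the first sub-sequence because $U_1\in\mathcal{R}\subset\mathcal{R}\cup\mathcal{T}$ and $S^k(U_1)\le S_L\le 1$, with the $C$-wave $U_1\to U_2$ admissible since $U_1\in\mathcal{R}$ and $U_2\in\mathcal{T}$ both lie in the closed region $\mathcal{R}\cup\mathcal{T}$; this gives $v_f^{S_1}\le v_i^{C}$. Lemma~\ref{Lemma-1} applies to the second sub-sequence because $U_2\in\mathcal{T}\subset\mathcal{L}\cup\mathcal{T}$, and since $U_2\in\mathcal{T}$ forces $S^k(U_2)=S^*(C_R)$ while $U_R\in\mathcal{L}$ means $S_R<S^*(C_R)$, we obtain $0\le S_R\le S^k(U_2)$; this gives $v_f^{C}\le v_i^{S_2}$. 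Both junctions being compatible yields compatibility of the whole sequence.

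For the \emph{only if} direction I would assume compatibility, so both junctions are compatible, and read off the weaker conclusions of Lemmas~\ref{Lemma-2} and~\ref{Lemma-1}: $U_1\in\mathcal{R}\cup\mathcal{T}$, $S^k(U_1)\le S_L\le 1$, $U_2\in\mathcal{L}\cup\mathcal{T}$ and $0\le S_R\le S^k(U_2)$. The decisive step is to sharpen the memberships of $U_1$ and $U_2$. Admissibility of the central $C$-wave forces $U_1$ and $U_2$ to the same side of $\mathcal{T}$, and combined with $U_1\in\mathcal{R}\cup\mathcal{T}$, $U_2\in\mathcal{L}\cup\mathcal{T}$ the only possibilities are $\{U_1\in\mathcal{R},\,U_2\in\mathcal{T}\}$, $\{U_1\in\mathcal{T},\,U_2\in\mathcal{L}\}$, or $U_1=U_2\in\mathcal{T}$. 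To select the correct one I would use that $U_1,U_2$ lie on a common level curve $\{\lambda_C=\text{const}\}$ (contact condition \eqref{Eq:Vel_Contact}) whose concentration, by \eqref{eq:lambda_c_para}, strictly decreases in $\mathcal{L}$ and strictly increases in $\mathcal{R}$, hence attains its unique minimum on $\mathcal{T}$. As $C_{U_2}=C_R<C_L=C_{U_1}$, the lower state $U_2$ must be the minimizer, i.e. $U_2\in\mathcal{T}$ and therefore $U_1\in\mathcal{R}$; the option $U_1\in\mathcal{T}$ would give $C_L\le C_R$ and the option $U_1=U_2$ would give $C_L=C_R$, each contradicting $C_R<C_L$. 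Finally, $U_2\in\mathcal{T}$ gives $S^k(U_2)=S^*(C_R)$, so $0\le S_R\le S^k(U_2)$ together with the nondegeneracy of the $S_2$-wave (otherwise the sequence collapses to the two-wave case of Lemma~\ref{Lemma-2}) forces $S_R<S^*(C_R)$, i.e. $U_R\in\mathcal{L}$.

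The only genuine obstacle is this sharpening: deciding which endpoint of the $C$-wave lands on $\mathcal{T}$. It is precisely the step where the hypothesis $C_R<C_L$ enters, and it is what distinguishes this lemma from Lemma~\ref{Lemma-D1}, where the reverse inequality $C_L<C_R$ instead places $U_1$ on $\mathcal{T}$ and $U_2$ in $\mathcal{L}$. Everything else is a routine reassembly of Lemmas~\ref{Lemma-1} and~\ref{Lemma-2}, which is exactly why the authors omit the details.
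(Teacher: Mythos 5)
Your proof is correct and takes precisely the route the paper intends: the authors omit the proof of Lemma~\ref{Lemma-D2}, stating only that it is an application of Lemmas~\ref{Lemma-1} and~\ref{Lemma-2}, and your splitting at the central $C$-wave, followed by the sharpening of $U_1\in\mathcal{R}$, $U_2\in\mathcal{T}$, $U_R\in\mathcal{L}$ via the admissibility of the contact wave, the monotonicity \eqref{eq:lambda_c_para} of the level curve of $\lambda_C$, and the hypothesis $C_R<C_L$, is exactly that application carried out in detail. Nothing further is needed.
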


% \begin{proof}
% The demonstration of this lemma is analogous to the previous one; we skip some details. From the properties of a $C$-waves, we obtain $U_1=(S_1,C_L)$ and $U_2=(S_2,C_R)$. To possess a compatible wave sequence (see Eq.~\eqref{Cond_Compatibilidade}), it is necessary and sufficient that
% \begin{eqnarray}
% v_f^1\le v_i^C \le v_f^C \le v_i^2,
% \end{eqnarray}
% or by substituting their respective values and applying Oleinik's entropy condition \eqref{Eq:Lax},
% \begin{eqnarray}
% \lambda_S(U_1)\le \lambda_C(U_1)=\lambda_C(U_2) \le \lambda_S(U_2).
% \end{eqnarray}
% Therefore, we must posses $U_1\in\mathcal{R}\cup\mathcal{T}$ and $U_2\in\mathcal{L}\cup\mathcal{T}$.

% For the case $C_R<C_L$, we perform an analysis similar to the one shown in Fig.~\ref{fig:lemma4_3}, switching the positions of the fractional flow functions of $C_L$ and $C_R$. As a result, $U_1 \not\in \mathcal{T}$, yielding $U_1 \in \mathcal{R}$ and $U_2 \in\mathcal{T}$. Recalling that $S^*(C_R)=S^k(U_2)$, from Lemma.~\ref{Lemma-1} follows
% \begin{eqnarray}
% 0 \le S_R \le S^k(U_2) = S^*(C_R).
% \end{eqnarray}
% Consequently, $U_R \in \mathcal{L}$. 
% On the other hand, by Lemma \ref{Lemma-2}, it is necessary and sufficient to guarantee the wave compatibility that
% \begin{eqnarray}
% S^k(U_1) \le S_L \le 1.
% \end{eqnarray}
% This concludes the proof.
% \end{proof}

Lemmas~\ref{Lemma-1}, \ref{Lemma-2}, \ref{Lemma-D1}, and \ref{Lemma-D2} allow us to construct compatible solution sequences for any Riemann problem \eqref{eq:mass_con_S}-\eqref{eq:initial_cond}. In the literature, this type of result is also known as a classification lemma, as it allows us to divide the phase plane into sets according to the solution type.

\begin{lemma}\label{Lemma:Classification}
Let $U_L$ be a constant left state in the $S$-$C$ phase plane. Then, for every right state $U_R$, there is a solution of the Riemann problem \eqref{eq:mass_con_S}-\eqref{eq:initial_cond}, given by a compatible sequence of $S$ and $C$-waves.
\end{lemma}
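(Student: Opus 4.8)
The plan is to fix the left state $U_L$ and to produce, for every right state $U_R \in I \times I$, a compatible wave sequence by partitioning the square $I \times I$ of right states into regions and assigning to each region the appropriate sequence catalogued in Lemmas~\ref{Lemma-1}--\ref{Lemma-D2} (together with a single $S$-wave when $C_R = C_L$); the content of the lemma is precisely that these regions exhaust $I \times I$. The organizing principle is that $S$-waves keep $C$ fixed (Subsections~\ref{Sub_Sec:Rarefaction} and \ref{Sub_Sec:Discontinuities}) while an admissible $C$-wave moves the state along a level curve $\lambda_C = \mathrm{const}$ that remains on one side of $\mathcal{T}$ and, by \eqref{eq:lambda_c_para}, is strictly decreasing in $\mathcal{L}$, strictly increasing in $\mathcal{R}$, and minimal on $\mathcal{T}$. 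Hence any passage from $C_L$ to a different $C_R$ uses exactly one $C$-wave, with $S$-waves repositioning the saturation before and after it; this is why no sequence longer than $S$-$C$-$S$ is ever needed, the shorter sequences $C$-$S$ and $S$-$C$ corresponding to the degenerate situations in which one flanking $S$-wave has zero strength.

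First I would dispose of the case $C_R = C_L$: along $C \equiv C_L$ the system decouples, and the scalar Buckley--Leverett problem with Oleinik's condition \eqref{Eq:Oleinik} supplies an admissible $S$-wave (a shock, a rarefaction, or a composite of the two) for any pair $S_L, S_R$. For $C_R \neq C_L$ I would run through the combinatorial possibilities fixed by the positions of $U_L$ and $U_R$ relative to $\mathcal{T}$ and by the sign of $C_R - C_L$, guided by the velocity ordering \eqref{Cond_Compatibilidade}: since $\lambda_S > \lambda_C$ in $\mathcal{L}$ and $\lambda_S < \lambda_C$ in $\mathcal{R}$, a state in $\mathcal{L}$ admits a slow $C$-wave on its left and a fast $S$-wave on its right, while a state in $\mathcal{R}$ admits the reverse. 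Thus when $U_L \in \mathcal{L} \cup \mathcal{T}$ and $C_R$ is attained on the $\mathcal{L} \cup \mathcal{T}$ branch of the level curve through $U_L$, the two-wave $C$-$S$ sequence of Lemma~\ref{Lemma-1} applies; when $U_L \in \mathcal{R}$ and $U_R \in \mathcal{R} \cup \mathcal{T}$, the two-wave $S$-$C$ sequence of Lemma~\ref{Lemma-2} applies; and when a single two-wave sequence cannot bridge $U_L$ and $U_R$ because the $C$-wave must be launched from, or land on, the transition curve, a three-wave $S$-$C$-$S$ sequence is used --- Lemma~\ref{Lemma-D1} when $C_L < C_R$ and Lemma~\ref{Lemma-D2} when $C_R < C_L$.

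The heart of the argument, and the step I expect to be the main obstacle, is verifying that the intermediate states required by each assignment actually exist and satisfy the hypotheses of the invoked lemma, so that the regions genuinely tile the square with no gap left uncovered. Here I would exploit the monotonicity of the level curves $\lambda_C = \mathrm{const}$ recorded above: strict monotonicity on each branch guarantees that, once an initial $S$-wave has placed the state on a suitable $\lambda_C$-level, the level curve meets the line $C = C_R$ on the required side of $\mathcal{T}$, while the definition of $S^k$ encodes exactly the admissible saturation ranges $0 \le S_R \le S^k(U_M)$ and $S^k(U_M) \le S_L \le 1$ of the lemmas. The delicate bookkeeping lies at the region boundaries: right states lying exactly on $\mathcal{T}$, and the dichotomy between a finite $S^k$ and the case $S^k = +\infty$, where the saturation constraint becomes vacuous and the reachable region widens. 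One must also check that the chosen branch of the level curve indeed reaches $C_R$; when it does not, the construction switches branches, and hence lemmas --- precisely the configurations where an $S$-$C$-$S$ sequence degenerates, with a vanishing $S$-wave, into a $C$-$S$ or $S$-$C$ one --- and the careful point is to confirm that these assignments match continuously across the boundaries so that every $U_R$ receives a sequence whose velocities remain ordered throughout.
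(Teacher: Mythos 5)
Your proposal follows essentially the same route as the paper's proof: fix $U_L$, split into the cases $U_L \in \mathcal{L}\cup\mathcal{T}$ and $U_L \in \mathcal{R}$, partition the square of right states according to which of Lemmas~\ref{Lemma-1}, \ref{Lemma-2}, \ref{Lemma-D1}, \ref{Lemma-D2} applies, and construct the intermediate states as intersections of the level curves $\lambda_C = \mathrm{const}$ (the paper's $\Gamma(\cdot)$) with the lines $C = C_L$, $C = C_R$ and the transition curve $\mathcal{T}$, using their monotonicity in $\mathcal{L}$ and $\mathcal{R}$. The paper simply makes your qualitative region description explicit --- writing the sets $\mathcal{L}_1,\mathcal{L}_2,\mathcal{L}_3$ and $\mathcal{R}_1,\mathcal{R}_2,\mathcal{R}_3$ and the intermediate states in closed form --- which in particular resolves the case you correctly flag at the end (a right state in $\mathcal{R}$ whose level curve leaves $\mathcal{R}$ before reaching $C = C_L$ is handled by the three-wave sequence of Lemma~\ref{Lemma-D1}, not Lemma~\ref{Lemma-2}).
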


\begin{proof}
Given a fixed state $V \in I\times I$, we define the set of states with the same $\lambda_C$ as $V$:
\begin{eqnarray}
\Gamma(V) = \{ U\in I \times I:\lambda_C(U)=\lambda_C(V) \}.
\end{eqnarray}
It should be noticed that, due to the smoothness of $\lambda_C$, $\Gamma(V)$ is a curve.
Let us fix $U_L \in I\times I$, and consider two cases: $U_L\in \mathcal{L}\cup \mathcal{T}$ and $U_L \in \mathcal{R}$.

\begin{figure}[ht!]
     \centering
     \begin{subfigure}[h!]{0.49\textwidth}
         \centering
         \includegraphics[height=5cm]{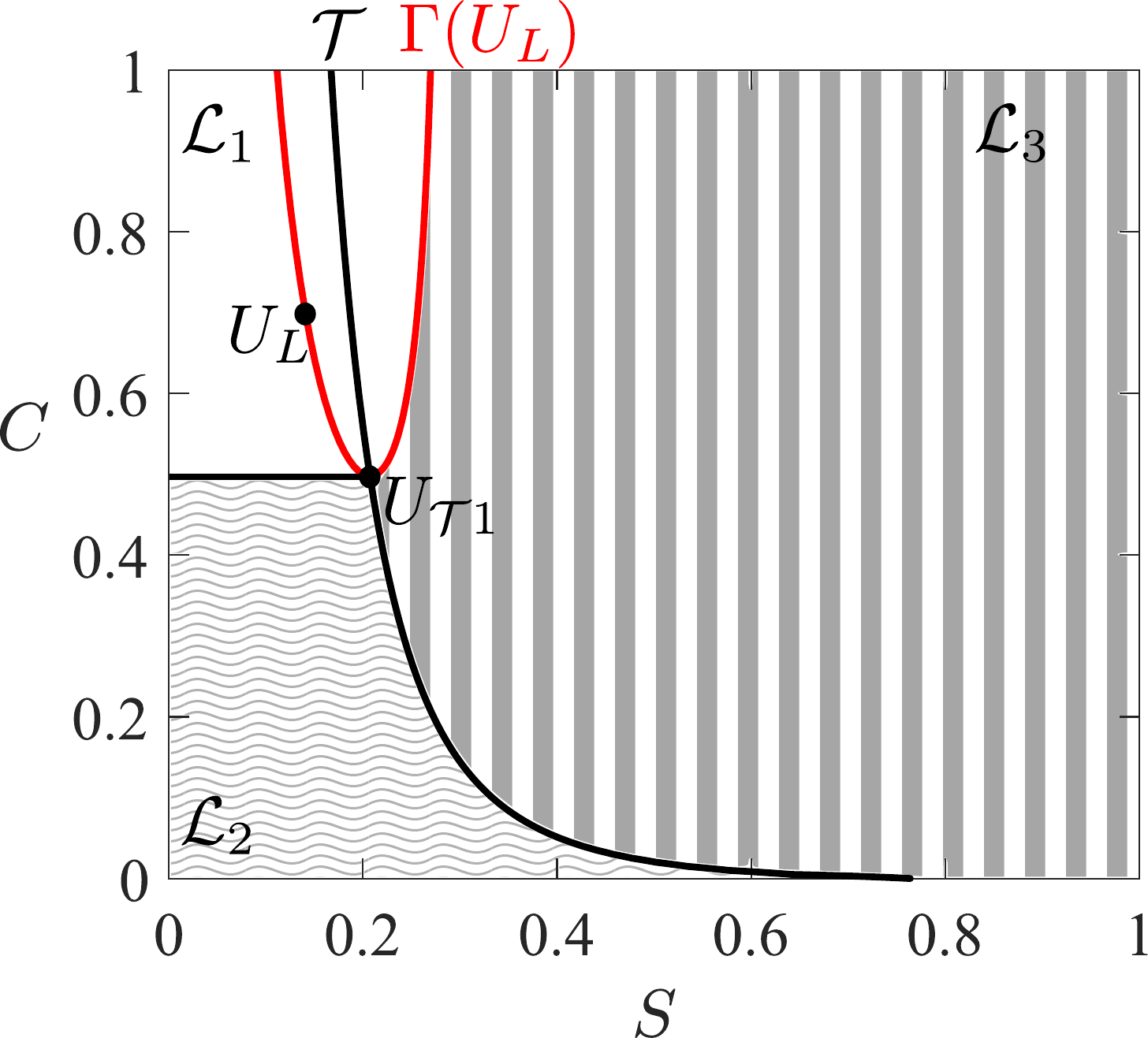}
        \caption{Sets $\mathcal{L}_i$ with $i=1,2,3$, in the phase plane following Eqs.~\eqref{eq:L1} through \eqref{eq:L3}.  }
	\label{fig:L}
     \end{subfigure}
     %\hfill
     \begin{subfigure}[h!]{0.49\textwidth}
         \centering
         \includegraphics[height=5cm]{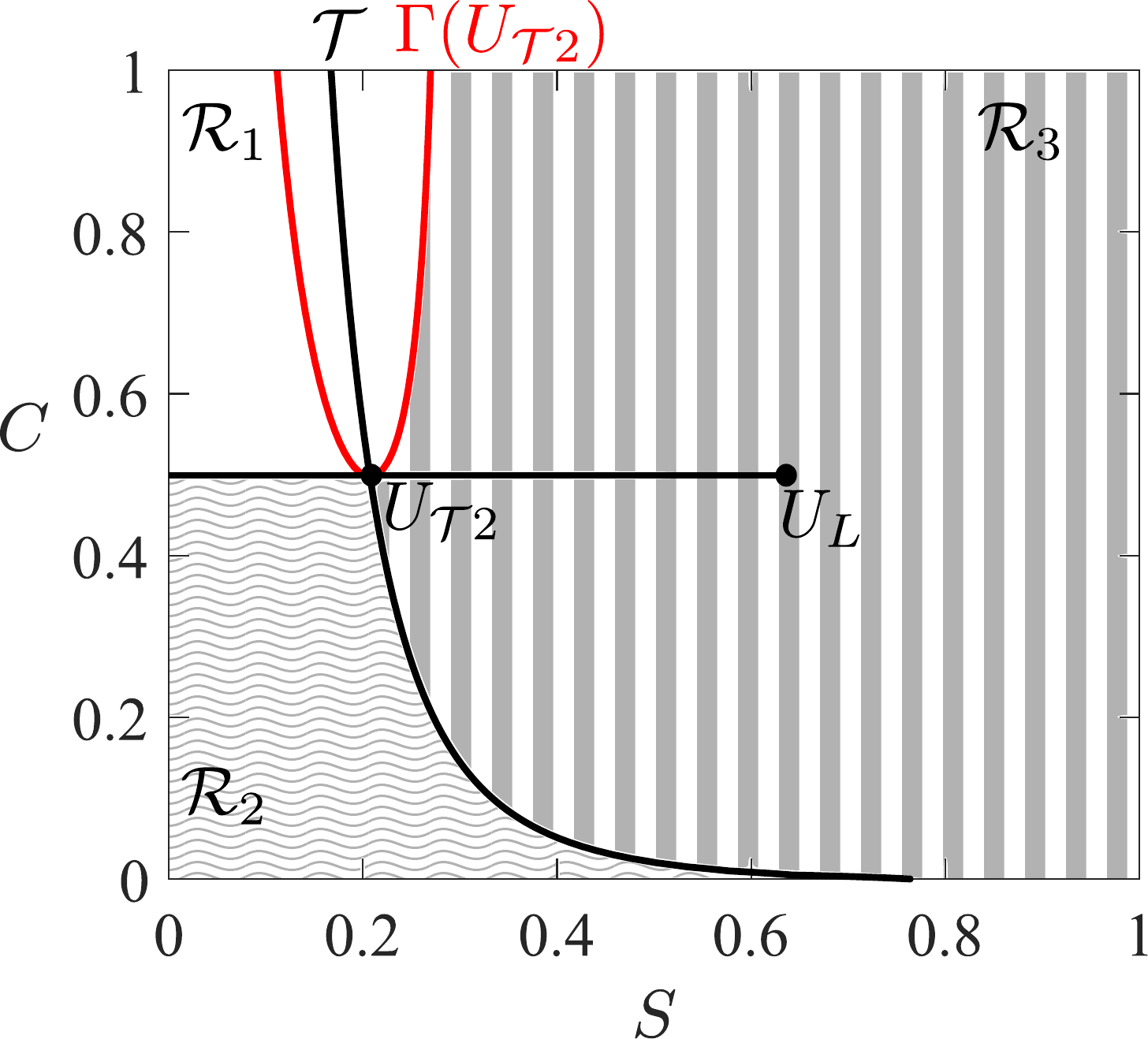}
        \caption{Sets $\mathcal{R}_i$ with $i=1,2,3$, in the phase plane following Eqs.~\eqref{eq:R1} through \eqref{eq:R3}.  }
	\label{fig:R}
     \end{subfigure}
        \caption{Phase plane division for $U_L \in \mathcal{L}\cup\mathcal{T}$ and $U_R \in \mathcal{R}$.}
        \label{fig:three graphs}
\end{figure}

% \begin{minipage}{\linewidth}
%       \centering
%       \begin{minipage}[t]{0.45\linewidth}
%           \begin{figure}[H]
%           \centering
%          \includegraphics[height=5cm]{L.pdf}
%         \caption{Sets $\mathcal{L}_i$ with $i=1,2,3$, in the phase plane following Eqs.~\eqref{eq:L1} through \eqref{eq:L3}.  }
% 	\label{fig:L}
%           \end{figure}
%       \end{minipage}
%       \hspace{0.05\linewidth}
%       \begin{minipage}[t]{0.45\linewidth}
%           \begin{figure}[H]
%           \centering
%          \includegraphics[height=5cm]{R.pdf}
%         \caption{Sets $\mathcal{R}_i$ with $i=1,2,3$, in the phase plane following Eqs.~\eqref{eq:R1} through \eqref{eq:R3}.  }
% 	\label{fig:R}
%           \end{figure}
%       \end{minipage}
%   \end{minipage}

\begin{enumerate}
\item \underline{In case of $U_L \in \mathcal{L} \cup \mathcal{T}$}.
We define the state $U_{\mathcal{T}1}$ as the intersection of the curve $\Gamma(U_L)$ and the transition curve $\mathcal{T}$, given by
\begin{eqnarray}
U_{\mathcal{T}1}=\mathcal{T}\cap \Gamma(U_L).
\end{eqnarray}
Observe that this state only exists if the minimum of the curve $\Gamma(U_L)$ is in the phase plane. For the cases where this intersection does not happen, we define $U_{\mathcal{T}1} = (1,0)$. In the particular case of $U_L \in \mathcal{T}$, we obtain $U_{\mathcal{T}1}=U_L$.

We divide the phase plane into three regions (see Fig.~\ref{fig:L}), defined by
\begin{eqnarray}
\label{eq:L1} \mathcal{L}_1 &=& \{ U \in I \times I : \lambda_C(U)\geq\lambda_C(U_L) \} \cup  \{U \in \mathcal{L} : C\ge C_{\mathcal{T}1}\} ,\\
\label{eq:L2} \mathcal{L}_2 &=& \mathcal{L} - \mathcal{L}_1 \cup \{U \in  \mathcal{L} :  C=C_{\mathcal{T}1} \},\\
\label{eq:L3} \mathcal{L}_3 &=& I \times I - \mathcal{L}_1 - \mathcal{L}_2 \cup \{ U \in \mathcal{R} : \lambda_C(U) = \lambda_C(U_L) \}.
\end{eqnarray}

  If $U_R \in \mathcal{L}_1$, the solution construction follows Lemma~\ref{Lemma-1}, where the intermediary state $U_M$ is defined by 
\begin{eqnarray}\label{eq:UM_L1}
U_M=\Gamma(U_L)\cap \{ U \in \mathcal{L}: C=C_R\}.
\end{eqnarray}    
   
If $U_R \in \mathcal{L}_2$, the solution construction follows Lemma~\ref{Lemma-D2}. In this case, the solution possesses two intermediate states, given by
\begin{eqnarray}
\label{eq:U2_L2}   U_2 &=&\mathcal{T} \cap \{ U \in I \times I: C=C_R \},\\
\label{eq:U1_L2}   U_1 &=& \Gamma(U_2)\cap \{ U \in \mathcal{R}: C=C_L\}.
\end{eqnarray}

If $U_R \in \mathcal{L}_3$, the solution construction is given by Lemma~\ref{Lemma-2}. The intermediary state $U_M$ is defined as
\begin{eqnarray}\label{eq:UM_L3}
U_M=\Gamma(U_R) \cap \{ U \in \mathcal{R}: C=C_L\}.
\end{eqnarray}

\item \underline{In case of $U_L\in \mathcal{R}$}. 
We define the state $U_{\mathcal{T}2}$ as the intersection of the transition curve $\mathcal{T}$ and the line $C=C_R$, as shown in Fig.~\ref{fig:R} and described by 
\begin{eqnarray}
U_{\mathcal{T}2} = \mathcal{T} \cap \{U \in I \times I: C = C_L\}.
\end{eqnarray}
It is possible to identify three sets (see Fig.~\ref{fig:R}) in the phase plane given by 
\begin{eqnarray}
\label{eq:R1}    \mathcal{R}_1 &=& \{ U\in I \times I: \lambda_C(U)\ge\lambda_C(U_{\mathcal{T}2}) \} \cup  \{ U \in \mathcal{L}: C >  C_L \},\\
\label{eq:R2}    \mathcal{R}_2 &=& \mathcal{L}-\mathcal{R}_1 ,\\
\label{eq:R3}    \mathcal{R}_3 &=& I \times I -\mathcal{R}_1 - \mathcal{R}_2 \cup \{U \in \mathcal{R}: \lambda_C(U)=\lambda_C(U_{\mathcal{T}2})\}.
\end{eqnarray}

          If $U_R \in \mathcal{R}_1$, the solution construction is given by Lemma~\ref{Lemma-D1}. Two intermediary states are defined by
\begin{eqnarray}
\label{eq:U1_R1}    U_1 &=& U_{\mathcal{T}2},\\
\label{eq:U2_R1}    U_2 &=& \Gamma(U_1) \cap \{ U \in \mathcal{L} : C = C_R\}.
\end{eqnarray}

If $U_R \in \mathcal{R}_2$, the solution construction follows Lemma~\ref{Lemma-D2}. The two intermediate states of this solution are listed below
\begin{eqnarray}
\label{eq:U2_R2}    U_2 &=& \mathcal{T} \cap \{U\in I \times I: C = C_R\},\\
\label{eq:U1_R2}    U_1 &=& \Gamma(U_2) \cap \{ U \in \mathcal{R} : C = C_L\}.
\end{eqnarray}

If $U_R \in \mathcal{R}_3$, the solution is composed of a wave sequence given by Lemma~\ref{Lemma-2}. The intermediate state $U_M$ is defined as follows
\begin{equation}
\label{eq:UM_R3}
U_M=\Gamma(U_R) \cap \{ U \in \mathcal{R}: C=C_L\}.
\end{equation}

\end{enumerate}

 \end{proof}

\begin{remark}
\label{rem:set_division}
{\it Proofs of Lemmas~\ref{Lemma-1}-\ref{Lemma:Classification} strongly depend on the states $U_L$ and $U_R$. 
Notice that, for limiting cases (small values of $S$, small or big values of $C$), it is possible to have some of these sets empty. Despite the occasional changes in the proofs, the main results remain valid, the solution's existence for any values $U_L$ and $U_R$ in the phase plane $S$-$C$.}
\end{remark}

\section{On the well-posedness of the problem}\label{Sec:Uniqueness}

For the Riemann problem solutions in this work the existence is proved by construction and the uniqueness is considered in the following sense: given a left state $U_L$ and a right state $U_R$ in the phase plane $I\times I$, there exists a unique compatible wave sequence connecting them.

%In this work, the uniqueness of Riemann problem solutions is considered in the following sense: given a left state $U_L$ and a right state $U_R$ in the phase plane $I\times I$, there exists a unique compatible wave sequence connecting them.  

% , determined by Lemmas \ref{Lemma-1}-\ref{Lemma-D2}. In Lemma~\ref{Lemma:Classification}, we classify the phase plane into sets where the conditions of just one of Lemmas~\ref{Lemma-1}-\ref{Lemma-D2} are satisfied. For states located in only one of these sets, the uniqueness of the solution is guaranteed. The uniqueness is not guaranteed at the intersection between the sets in Lemma~\ref{Lemma:Classification}. The possible intersections are: 
% $\mathcal{L}_1 \cap \mathcal{L}_2$, 
% $\mathcal{L}_1 \cap \mathcal{L}_3$ (see Fig.~\ref{fig:L});
% $\mathcal{R}_1 \cap \mathcal{R}_3$ (see Fig.~\ref{fig:R}). In what follows, we study these intersections with two compatible wave sequences. 

\subsection{The intersection between sets $\mathcal{L}_1$ and $\mathcal{L}_2$}
\label{sec:L1L2}

In this case, $U_L\in\mathcal{L}$ and $U_R$ is in the intersection between sets $\mathcal{L}_1$ and $\mathcal{L}_2$ (see Fig.~\ref{fig:L}).
%, i.e., $U_R\in\mathcal{L}$ and $C_R = C_{\mathcal{T}1}$. 
Possible solutions are given in Lemmas~\ref{Lemma-1} and \ref{Lemma-D2}.
The solution corresponding to Lemma~\ref{Lemma-D2} presents two intermediate states, $U_1$ and $U_2$. State $U_2$ coincides with $U_{\mathcal{T}1}$, and $U_1$ possesses the same $\lambda_C$ as $U_L$ with $C=C_L$, yielding $\lambda_C(U_L) = \lambda_C(U_1) = \lambda_C(U_2)$. Geometrically, these relations imply that the points $(S_L,f(U_L))$, $(S_1,f(U_1))$ and $(S_2,f(U_2))$ are on the same secant line connecting $(-\mathcal{A},0)$ to $(S_L,f(U_L))$, see Fig.~\ref{fig:UniGeo_31}.
\begin{figure}[h!]
	\centering	    
	\includegraphics[height=5cm]{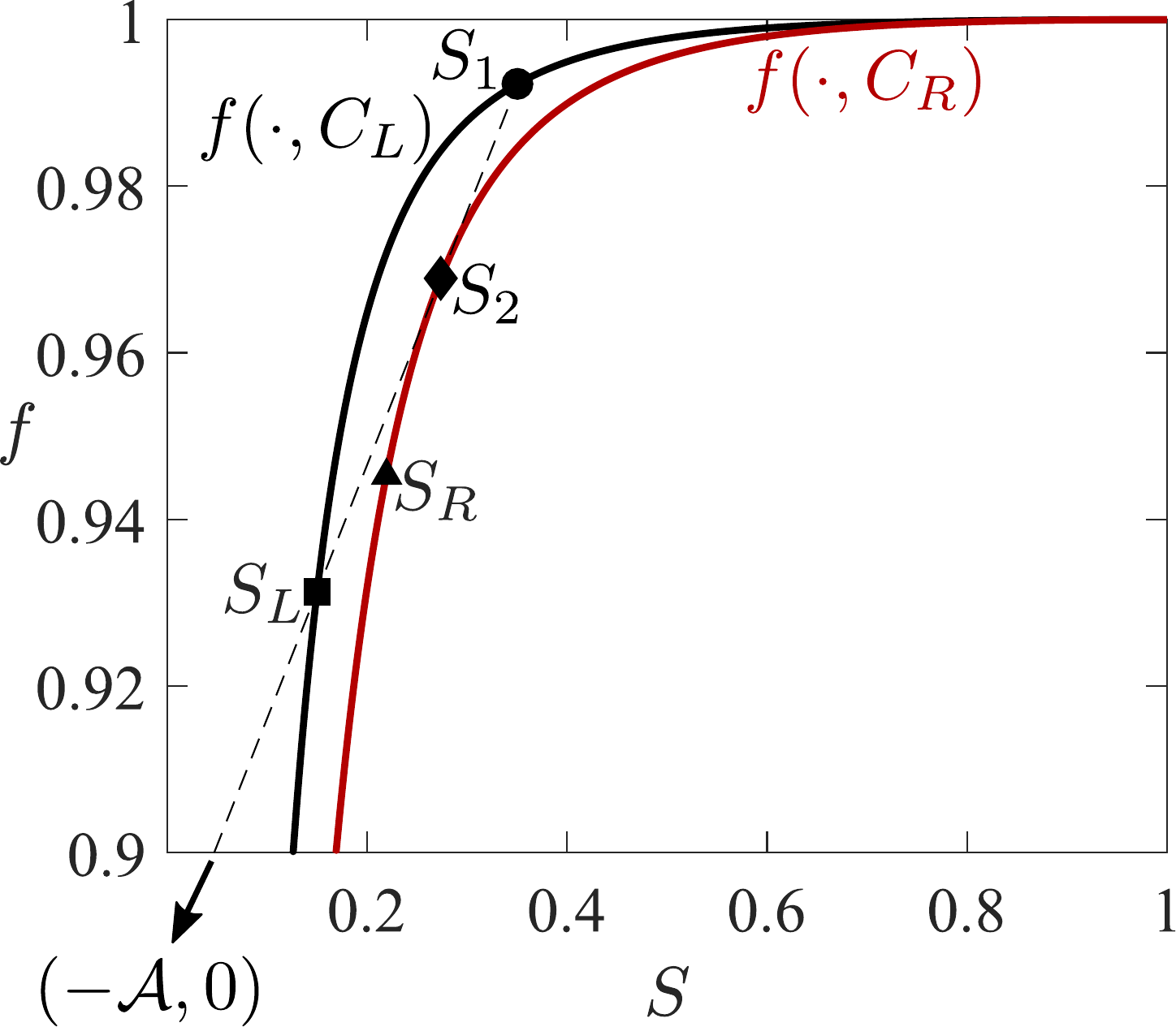}
	\caption{Fractional flow function of the states $U_L$ and $U_R$ with the secant line that connects $(-\mathcal{A},0)$ with $(S_L,f(U_L))$. The points $(S_1,f(U_1))$ and $(S_2,f(U_2))$ are in the same secant line (black dashed line) that connects $(-\mathcal{A},0)$ and $(S_L,f(U_L))$.}
	\label{fig:UniGeo_31}
\end{figure}

This intersection corresponds to $U_L \in \mathcal{L}$ and, from Lemma \ref{Lemma-D2}, $U_1 \in \mathcal{R}$ and $U_2 \in \mathcal{T}$, yielding
\begin{equation}\label{eq:ineq_1}
S_L \le S_2 \le S_1,
\end{equation}
where the equality only happens when $U_L \in \mathcal{T}$. 
The states $U_L$ and $U_1$ satisfying Eq.~\eqref{eq:ineq_1} are connected by the secant line located beneath the fractional flow function of $C_L$; thus, they are connected by a shock $S$-wave.
Therefore, the contact and the shock waves possess the same velocity; see Fig.~\ref{fig:UniGeo_31}.
\begin{figure}[h!]
	\centering	    
	\includegraphics[height=5cm]{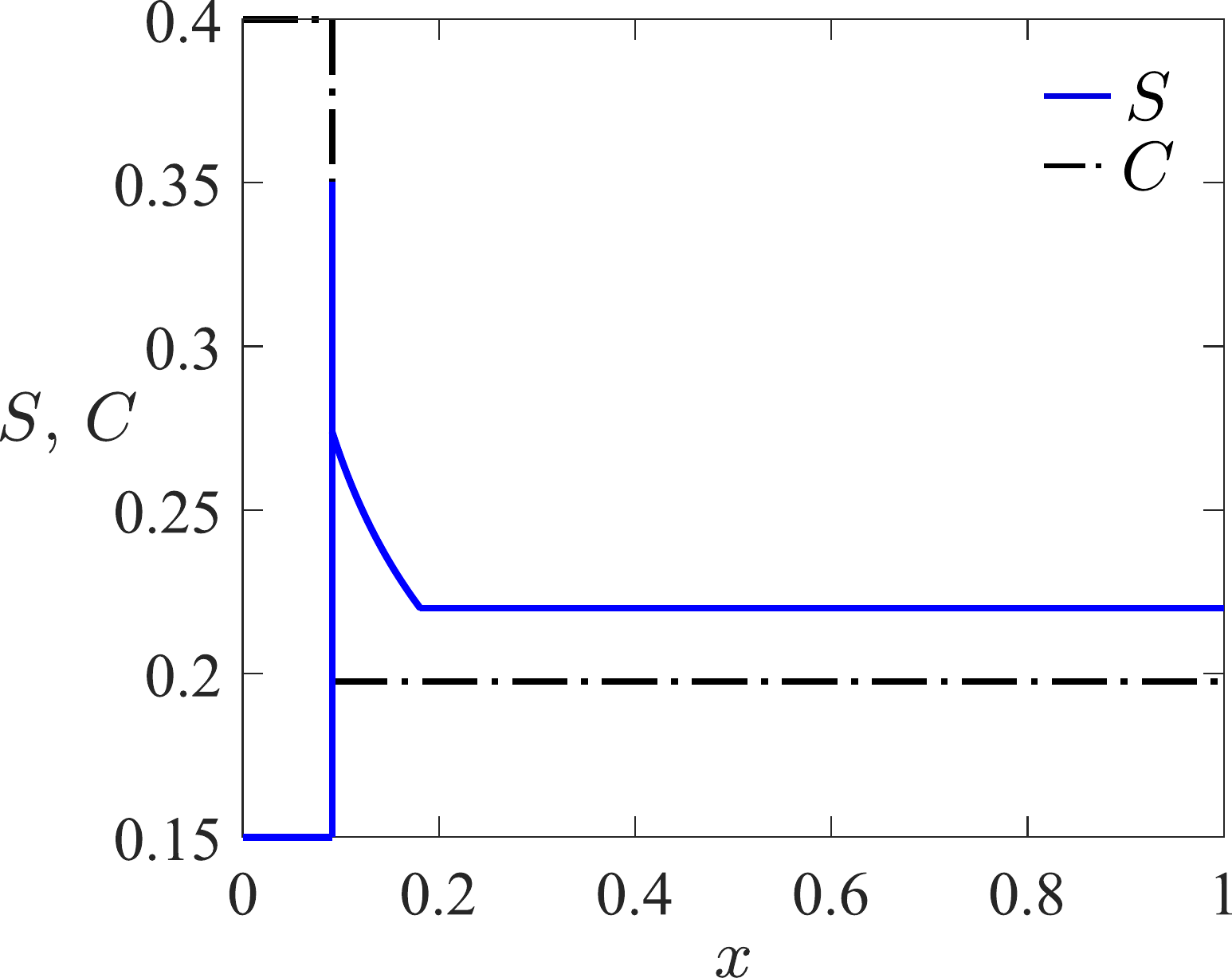}\hfill
	\includegraphics[height=5cm]{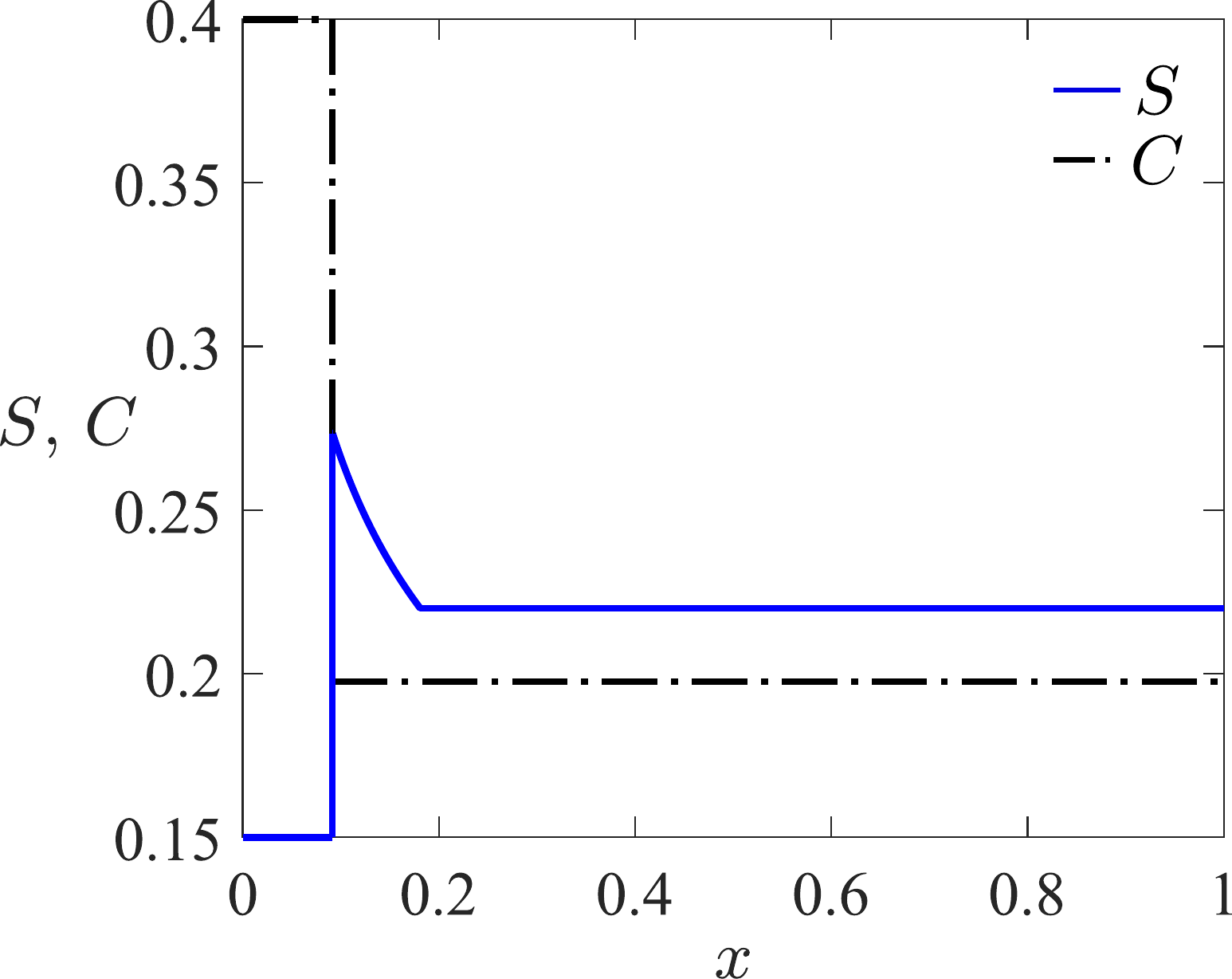}
	\caption{Solution profile at the same point in $\mathcal{L}_1 \cap \mathcal{L}_2$. The left panel shows the solution described in Lemma~\ref{Lemma-D2} and the right panel shows the one described in Lemma~\ref{Lemma-1}. This plot was done for the model explained in Section \ref{sec:application}, $U_L=(0.15,0.4)$, $U_R=(0.22,0.1975540160002906)$, time $t=0.3$ and the parameter values from Table~\ref{Table:Values_parameter}.}
	\label{fig:Unicidade_31}
\end{figure}

On the other hand, the solution following Lemma~\ref{Lemma-1} possesses an intermediate state $U_M=U_{\mathcal{T}1}=U_2$, which is connected to $U_L$ through a $C$-wave. Thus, this $C$-wave possesses the same velocity as the sequence of $S$ and $C$ waves of the previous solution. Despite this, as $S_M = S_2 < S_1$ (see Eq.~\eqref{eq:ineq_1}), the saturation profile for each construction is not the same. Fig.~\ref{fig:Unicidade_31} shows an example of these two possible saturation profiles for this case.

% Thus, $S_L<S_M$ (see Eq.~\eqref{eq:ineq_1}), and as in the previous analysis, we obtain that the $S$-wave is a shock wave.
% The $C$-wave velocity connecting $U_L$ to $U_M$ is $\lambda_C(U_L)$; therefore, the solution profile shows the same behavior as mentioned in the previous paragraph: the $S$-wave is a shock wave and moves together with the $C$-wave. Despite this, as $S_M = S_2 < S_1$ (see Eq.~\eqref{eq:ineq_1}), the saturation profile for each construction is not the same. Fig.~\ref{fig:Unicidade_31} shows an example of these two possible saturation profiles for this case.

\subsection{The intersection between sets $\mathcal{L}_1$ and $\mathcal{L}_3$}
\label{sec:L1L3}

In this case, $U_L\in\mathcal{L}$ and $U_R$ is in the intersection between sets $\mathcal{L}_1$ and $\mathcal{L}_3$ (see Fig.~\ref{fig:L}).% i.e., $U_R\in\mathcal{R}$ and $\lambda_C(U_L) = \lambda_C(U_R)$. 
Figure~\ref{fig:Unicidade_PF} shows the possible wave sequences in the phase plane, following Lemma~\eqref{Lemma-1} with intermediate state $U_M^1\in \mathcal{L}$, and following Lemma~\eqref{Lemma-2} with intermediate state $U_M^2 \in \mathcal{R}$; yielding $S_L < S_M^2$ and $S_M^1 < S_R$.
Geometrically, this scenario occurs when the states $U_L$, $U_R$ and $U_M^1$, $U_M^2$ are in the same secant line connecting $(-\mathcal{A},0)$ and $(S_L,f(U_L))$ and posses the same value of $\lambda_C$.%, i.e., these states are located on the curve $\Gamma(U_L)$, see Fig.~\ref{fig:Unicidade_PF}. 
Therefore, the $S$-waves connecting $U_L$ to $U_M^2$ and $U_M^1$ to $U_R$ are always shock waves with the same propagation velocity as the contact wave.
\begin{figure}[h!]
	\centering	    
	\includegraphics[height=5cm]{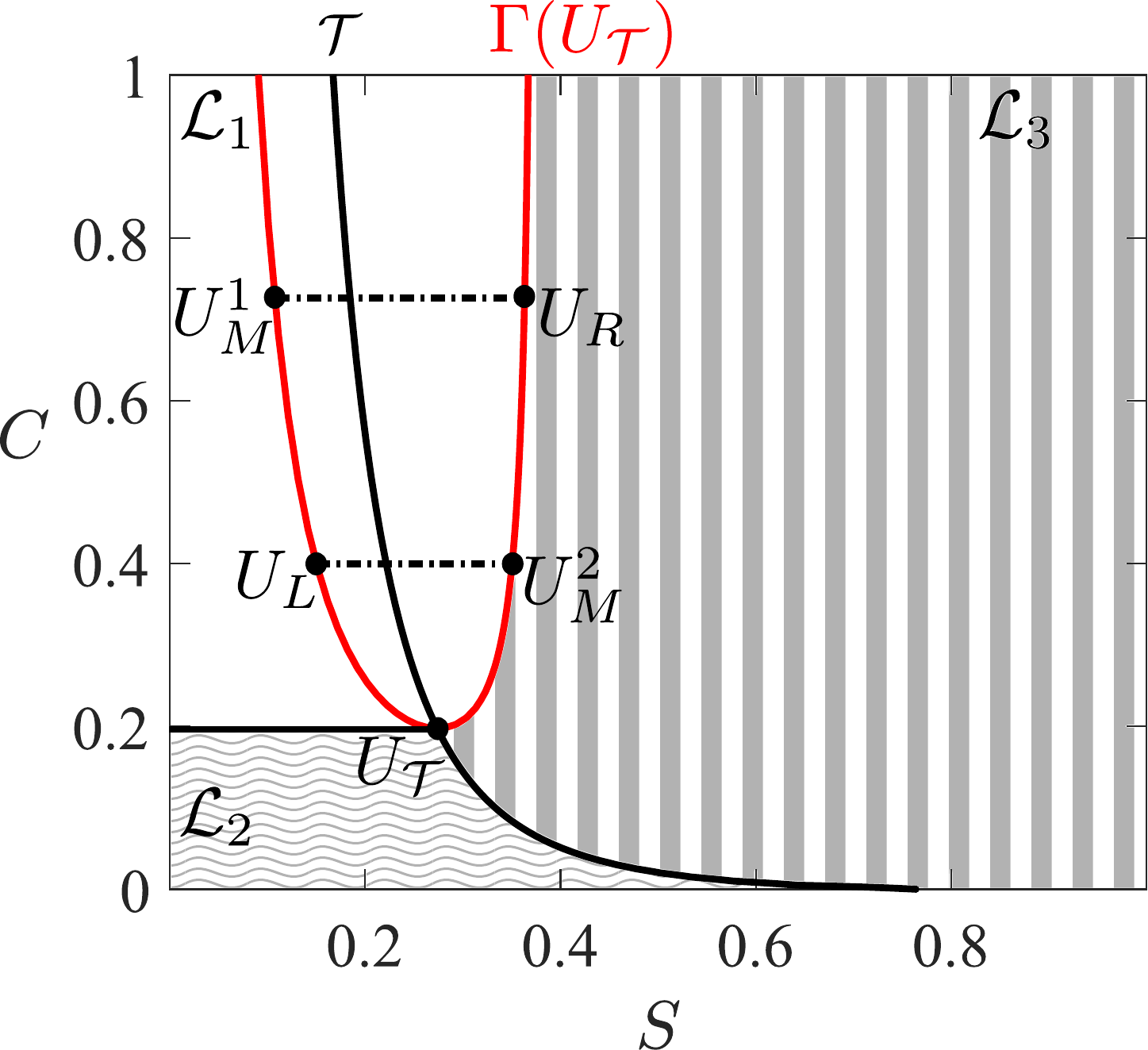}
	\caption{Two compatible wave sequences are possible with state $U_L \in \mathcal{L}_1$ and state $U_R \in \mathcal{L}_1 \cap \mathcal{L}_3$: one links $U_L$ to $U_M^1$ by a $C$-wave, then $U_M^1$ to $U_R$ by a $S$-wave. The other one links  $U_L$ to $U_M^2$ by a $S$-wave, then $U_M^2$ to $U_R$ by a $C$-wave.}
	\label{fig:Unicidade_PF}
\end{figure}

Despite the equality in the $S$ and $C$-waves velocities, the saturations $S_L,\ S_M^1, \ S_M^2$ and $S_R$ are always different, resulting in different profiles of the solution. Figure~\ref{fig:Unicidade_11} shows an example of the possible profiles solution for this case.
\begin{figure}[h!]
	\centering	    
	\includegraphics[height=5cm]{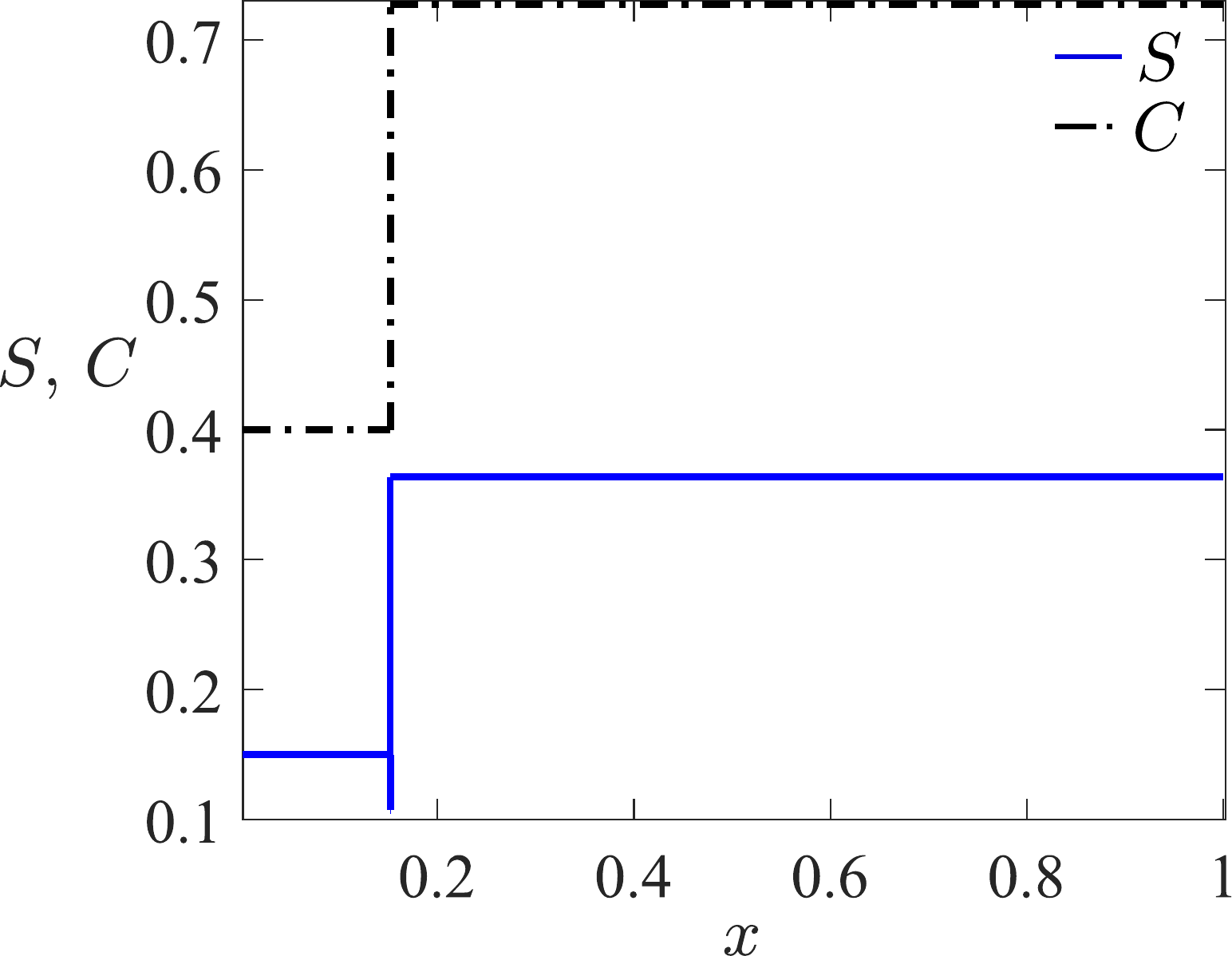}\hfill
	\includegraphics[height=5cm]{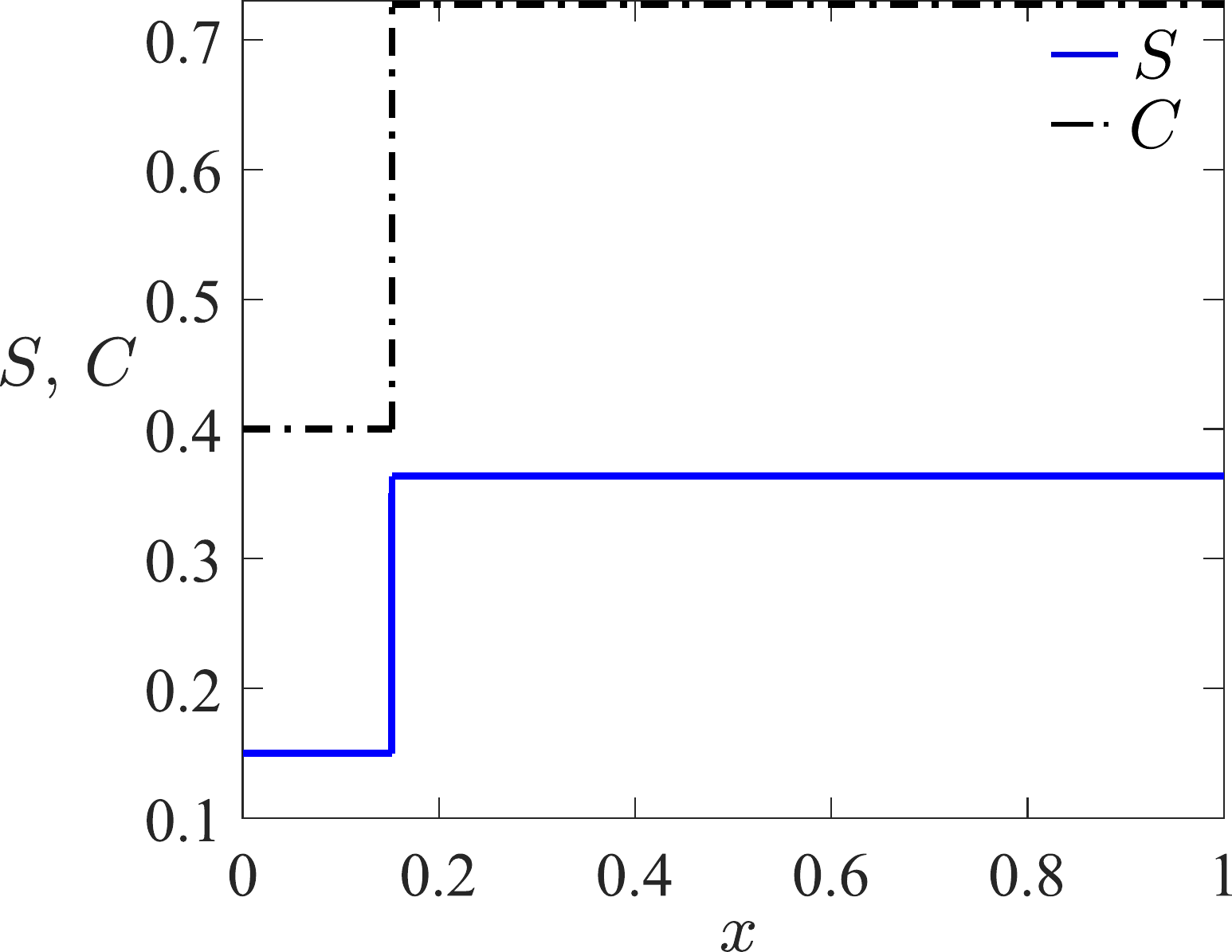}
	\caption{Solution profile for $U_R \in \mathcal{L}_1 \cap \mathcal{L}_3$, with $C_L<C_R$. The left panel shows the solution described in Lemma~\ref{Lemma-1} and the right panel the one described in Lemma~\ref{Lemma-2}. This plot was made using the model from Section~\ref{sec:application}, with $U_L=(0.15,0.4)$, $U_R=(0.3636,0.7273)$, time $t=0.3$ and parameters values from Table~\ref{Table:Values_parameter}.}
	\label{fig:Unicidade_11}
\end{figure}

\subsection{The intersection between sets $\mathcal{R}_1$ and $\mathcal{R}_3$}
\label{sec:R1R3}

In this case, $U_L\in\mathcal{R}$ and $U_R$ is in the intersection between sets $\mathcal{R}_1$ and $\mathcal{R}_3$ (see Fig.~\ref{fig:R}).%, i.e., $U_R\in\mathcal{R}$ and $\lambda_C(U_R)= \lambda_C(U_{\mathcal{T}2})$. 
Possible solutions follow Lemmas~\ref{Lemma-2}, \ref{Lemma-D1}. The solution provided by Lemma~\ref{Lemma-D1} possesses two intermediate states: $U_1=U_{\mathcal{T}2}$ and $U_2 \in \mathcal{L}$ yielding $S_2 < S_1 < S_R$. Moreover, $\lambda_C(U_2) = \lambda_C(U_1)=\lambda_C(U_R)$, which geometrically indicates that points $(S_1,f(U_1))$, $(S_2,f(U_2))$ and $(S_R,f(U_R))$ are in the same secant line connecting $(-\mathcal{A},0)$ to $(S_R,f(U_R))$. Thus, the $S$-wave connecting $U_2$ to $U_R$ is a shock with the same propagation velocity as the $C$-wave connecting $U_1$ to $U_2$.

The solution construction following Lemma~\ref{Lemma-2} has an intermediate state $U_M=U_{\mathcal{T}2}$. Once the $C$-wave speed is the same as the sequence of $S$ and $C$-waves presented in Lemma~\ref{Lemma-D1}, both wave sequences move together with time. However, as $S_2 < S_1=S_M$, the solution profile is different for each wave sequence, as shown in Fig.~\ref{fig:Unicidade_R14}.
\begin{figure}[h!]
	\centering	    
	\includegraphics[height=5cm]{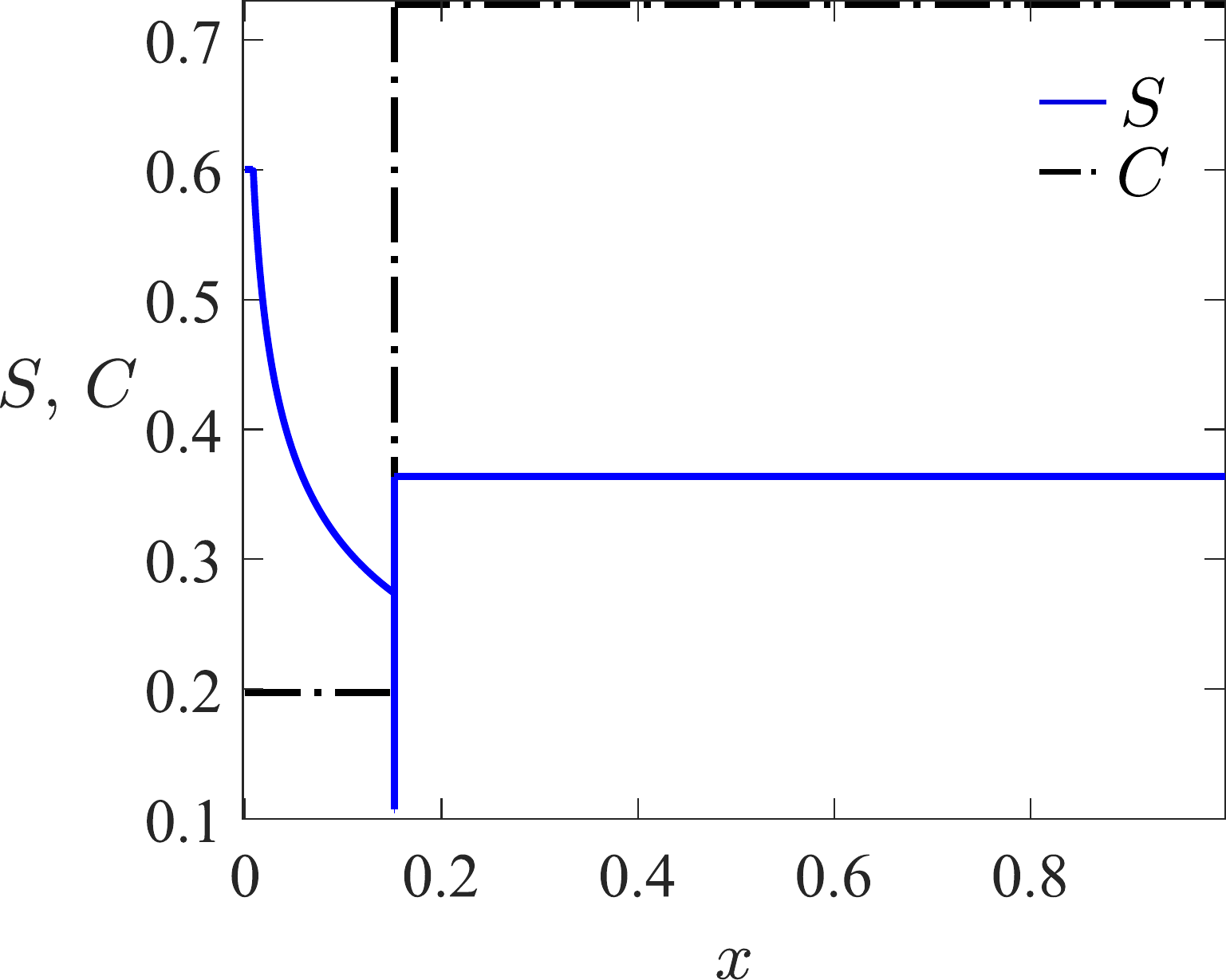}\hfill
	\includegraphics[height=5cm]{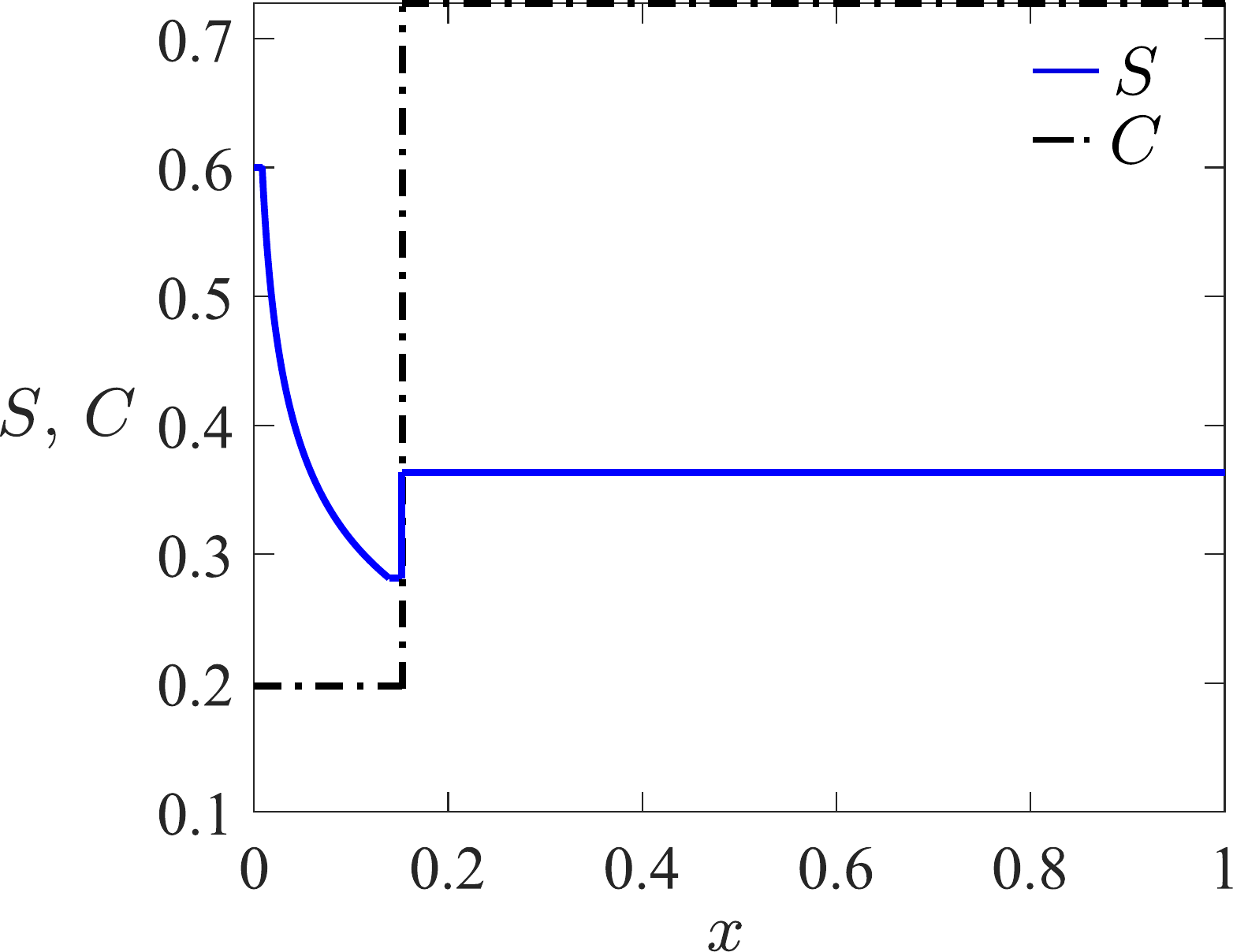}
	\caption{Solution profile at the same point in $\mathcal{R}_1 \cap \mathcal{R}_3$. The left panel shows the solution described in Lemma~\ref{Lemma-D1} and the right panel shows the one described in Lemma~\ref{Lemma-2}. This plot was done for the model explained in Section \ref{sec:application}, $U_L=(0.6, 0.1975540160002906)$, $U_R = (0.3636, 0.7273)$, time $t=0.5$ and the parameter values from Table \ref{Table:Values_parameter}.}
	\label{fig:Unicidade_R14}
\end{figure}

For the cases presented in Subsections \ref{sec:L1L2}, \ref{sec:L1L3}, and \ref{sec:R1R3}, the lack of uniqueness is related to sequences involving shocks and contact waves. In the $x$-$t$ (characteristic) plane, Fig.~\ref{Fig:Carac_Unicidade} presents the solutions corresponding to Figs.~\ref{fig:Unicidade_31}, \ref{fig:Unicidade_11}, and \ref{fig:Unicidade_R14}. 
As shock and contact wave velocities are equal in each of these cases, the solution is unique in the $x$-$t$ plane. The latter proves the following Lemma~\ref{Lemma:Unic_xt}.

\begin{figure}[h!]
	\centering	    
	\includegraphics[height=3.8cm]{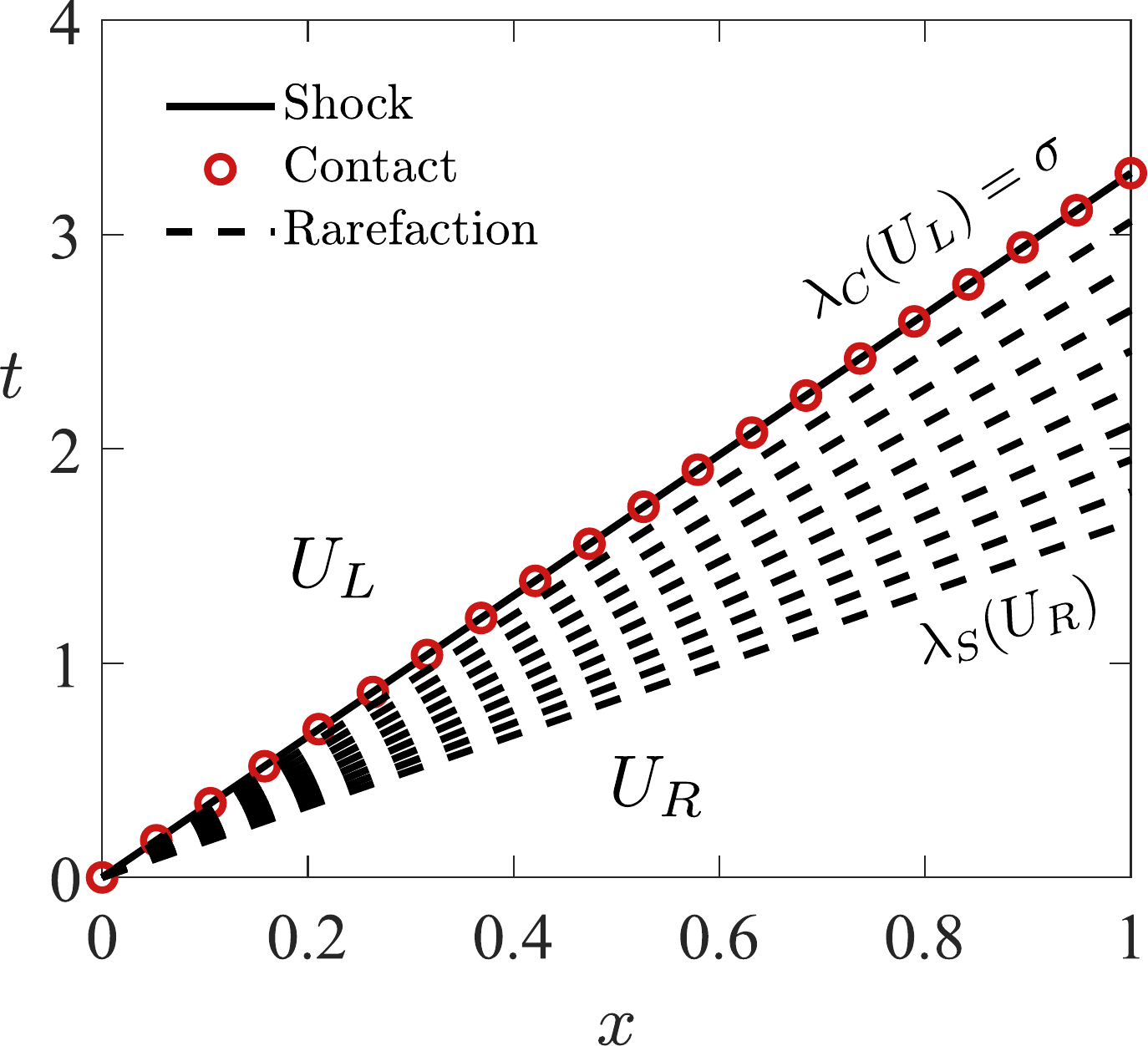}
	\includegraphics[height=3.8cm]{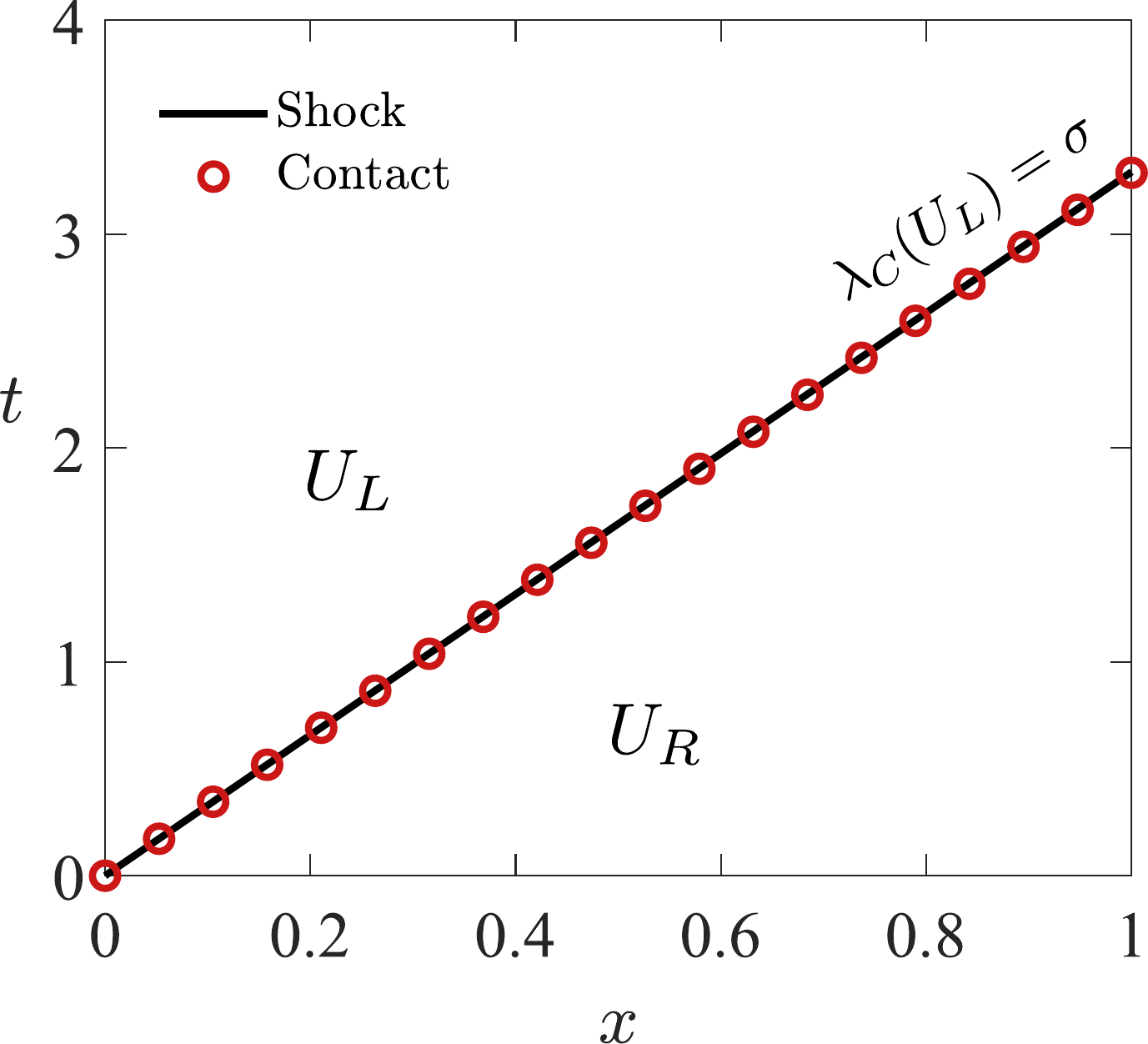}
	\includegraphics[height=3.8cm]{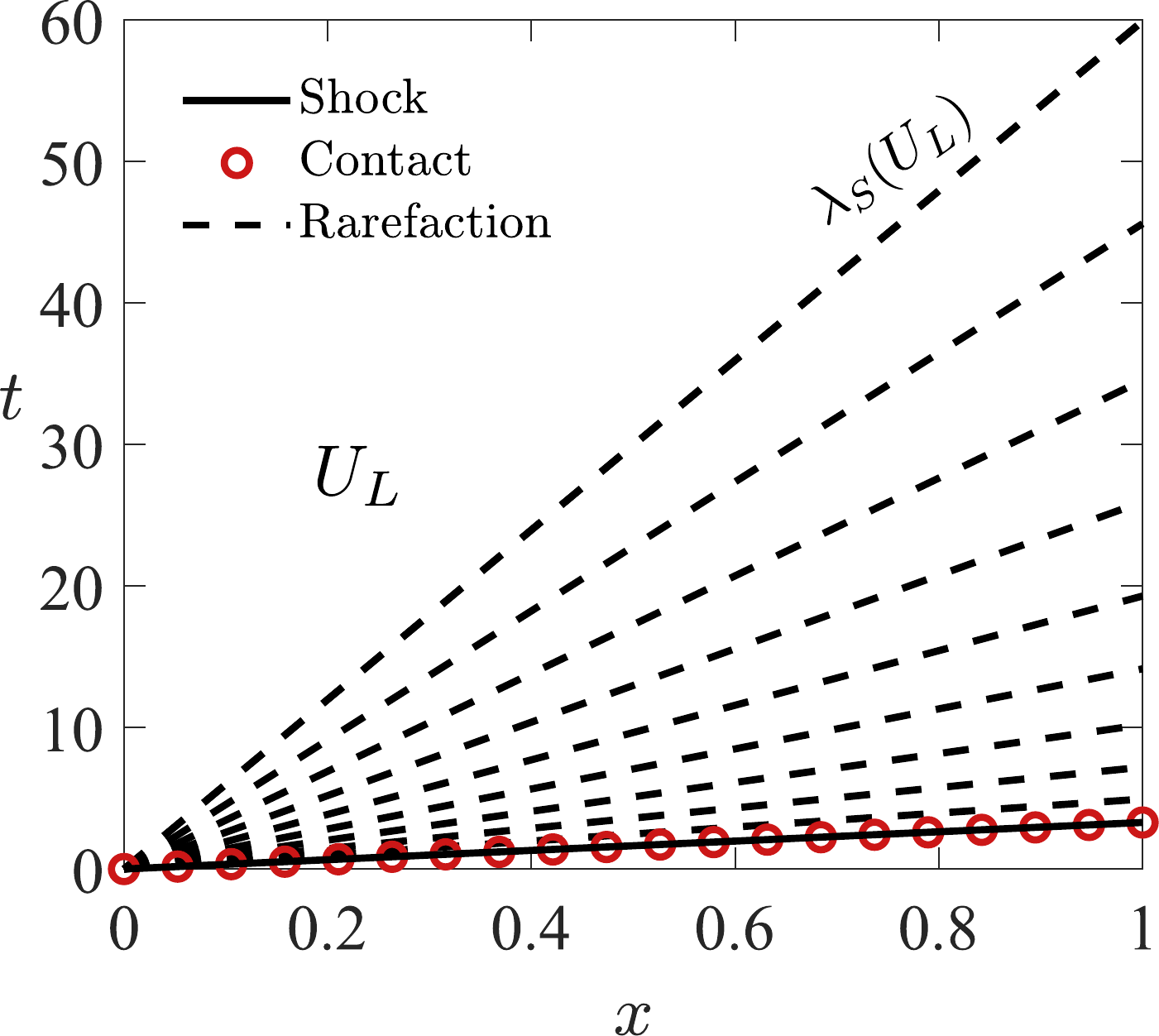}
 	\caption{Solutions for the cases presented in sub-sections \ref{sec:L1L2} (left panel), \ref{sec:L1L3} (central panel), and \ref{sec:R1R3} (right panel) in the $x$-$t$ plane.
  }
	\label{Fig:Carac_Unicidade}
\end{figure}

\begin{lemma}\label{Lemma:Unic_xt}
For a given left and right states ($U_L$ and $U_R$) in $I \times I$, there is a solution of the system \eqref{eq:mass_con_S}-\eqref{eq:evol_con_C} with initial data \eqref{eq:initial_cond}. This solution is unique for all $U_L$ and $U_R$ in $I \times I$, except on the set intersections $\mathcal{L}_1\cap \mathcal{L}_2$, $\mathcal{L}_1\cap \mathcal{L}_3$, and $\mathcal{R}_1\cap \mathcal{R}_3$ (defined in \eqref{eq:L1}-\eqref{eq:L3}, \eqref{eq:R1}-\eqref{eq:R3}), where we have two different solutions in the phase plane.
In any case, the solution is unique in the $x$-$t$ plane.
\end{lemma}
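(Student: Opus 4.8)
The plan is to assemble the statement from the classification result and the three boundary analyses already carried out, treating existence, phase-plane multiplicity, and $x$-$t$ uniqueness in turn. Existence is immediate: Lemma~\ref{Lemma:Classification} produces, for every pair $(U_L,U_R)\in(I\times I)^2$, a compatible sequence of $S$- and $C$-waves, and each such sequence defines a weak solution obeying the admissibility criteria of Subsection~\ref{Sub_sec:Adm_Crit}. The real task is therefore to control the number of such sequences.

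First I would establish uniqueness in the phase plane away from the three flagged overlaps. The construction of Lemma~\ref{Lemma:Classification} partitions the target plane, for a fixed $U_L$, into $\mathcal{L}_1,\mathcal{L}_2,\mathcal{L}_3$ when $U_L\in\mathcal{L}\cup\mathcal{T}$ and into $\mathcal{R}_1,\mathcal{R}_2,\mathcal{R}_3$ when $U_L\in\mathcal{R}$; on the interior of each region exactly one of Lemmas~\ref{Lemma-1}, \ref{Lemma-2}, \ref{Lemma-D1}, \ref{Lemma-D2} applies, and the intermediate states it prescribes are pinned down uniquely as intersections of the curve $\Gamma(\cdot)$, the transition curve $\mathcal{T}$, and the lines $C=C_L$ or $C=C_R$. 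Together with the admissibility criteria, which exclude alternative wave types, this forces the wave sequence to be unique on each region interior. A second, distinct compatible sequence can thus arise only where $U_R$ simultaneously satisfies the compatibility hypotheses of two different lemmas, that is, on a shared boundary of two regions. I would then examine these boundaries one by one and show that on every shared boundary except $\mathcal{L}_1\cap\mathcal{L}_2$, $\mathcal{L}_1\cap\mathcal{L}_3$, and $\mathcal{R}_1\cap\mathcal{R}_3$ the two candidate constructions in fact degenerate to the same sequence; for instance, on $\mathcal{L}_2\cap\mathcal{L}_3$ the right state lies on $\mathcal{T}$, so the terminal $S$-wave of the Lemma~\ref{Lemma-D2} construction collapses and the remaining $S$-then-$C$ sequence coincides with the Lemma~\ref{Lemma-2} one. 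This case-by-case bookkeeping is where I expect the genuine work to lie, since one must verify carefully that the three listed overlaps are the only ones supporting two truly different phase-plane paths. The exceptional overlaps themselves require no new argument: Subsections~\ref{sec:L1L2}, \ref{sec:L1L3}, and \ref{sec:R1R3} already exhibit the two distinct phase-plane sequences, which is exactly the claimed non-uniqueness in the phase plane.

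Finally, for uniqueness in the $x$-$t$ plane I would invoke the common-velocity observation from those same subsections. In each of the three cases the state that is present in one construction but absent from the other lies on the single secant line through $(-\mathcal{A},0)$ determined by $U_L$ (or, in the $\mathcal{R}_1\cap\mathcal{R}_3$ case, by $U_R$), so by \eqref{Eq:vel_BL} and \eqref{Eq:Vel_Contact} the extra $S$-shock and the adjacent $C$-contact carry the identical speed $\lambda_C$. Consequently the ``extra'' intermediate state occupies only the ray $x/t=\lambda_C$, a set of measure zero, while both constructions produce the same jump of $U$ across that ray and the same wave structure on either side. The delicate point is to make precise that the two differing phase-plane paths collapse to a single weak solution: I would check directly that the two associated profiles agree off the common ray and hence coincide as elements of $L^1_{\mathrm{loc}}$, so that they represent the very same weak solution. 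Uniqueness in the $x$-$t$ plane for all remaining $(U_L,U_R)$ is then inherited from the phase-plane uniqueness established in the previous step, completing the proof.
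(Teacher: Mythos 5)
Your proposal is correct and follows essentially the same route as the paper: existence via Lemma~\ref{Lemma:Classification}, phase-plane non-uniqueness confined to the three overlaps analyzed in Subsections~\ref{sec:L1L2}, \ref{sec:L1L3}, and \ref{sec:R1R3}, and $x$-$t$ uniqueness from the coincidence of the shock and contact velocities there. If anything, you are more explicit than the paper, which simply asserts that equal velocities give uniqueness in the $x$-$t$ plane, whereas you spell out that the extra intermediate state lives only on the single ray $x/t=\lambda_C$ (a measure-zero set), so the two constructions define the same $L^1_{\mathrm{loc}}$ solution.
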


Let us consider the state $U_R$ in the intersections defined in the lemma above. Any neighborhood of this state possesses points in different sets, resulting in qualitatively different solutions. Thereby, small perturbation of $U_R$ results in a different sequence and, in some cases, a different number of waves. In this context, we enunciate the definition of structural stability \cite{thesisFred, schecter1996structurally}:
\begin{definition}
    The solution of the Riemann problem is said \textbf{structurally stable}, if the number and type of waves that compose the solution are preserved, when the initial data and the flux function are perturbed.
\end{definition}
The results proved in this section yield:
\begin{theorem}
    The Riemann problem \eqref{eq:mass_con_S}-\eqref{eq:evol_con_C}, and \eqref{eq:initial_cond} presents a loss of structural stability when initial conditions are in the sets $\mathcal{L}_1\cap \mathcal{L}_2$, $\mathcal{L}_1\cap \mathcal{L}_3$, and $\mathcal{R}_1\cap \mathcal{R}_3$.
\end{theorem}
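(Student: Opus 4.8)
The plan is to read the result off the phase-plane classification of Lemma~\ref{Lemma:Classification} together with a simple topological fact about the three intersection sets. The key observation is that each intersection named in the statement is a common boundary of two adjacent classification regions whose associated wave sequences differ either in the \emph{number} of waves or in the \emph{type and ordering} of the waves. Since structural stability requires the number and type of composing waves to be preserved under small perturbations, it suffices to show that in every neighborhood of a boundary state $U_R$ there are perturbed right states lying in each of the two adjacent regions, whose solutions are built from genuinely different wave sequences. Much of the work is already contained in Lemma~\ref{Lemma:Unic_xt}, which established that at precisely these intersections two distinct phase-plane solutions coexist; what remains is to identify those two solutions with the two sides of the interface and to observe that their wave patterns are not the same.

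First I would record the wave structure attached to each relevant region, reading it directly off Lemmas~\ref{Lemma-1}, \ref{Lemma-2}, \ref{Lemma-D1} and \ref{Lemma-D2}. For $U_L \in \mathcal{L}\cup\mathcal{T}$, a right state in $\mathcal{L}_1$ produces the two-wave sequence $U_L \xrightarrow{C} U_M \xrightarrow{S} U_R$ (Lemma~\ref{Lemma-1}); a right state in $\mathcal{L}_2$ produces the three-wave sequence $U_L \xrightarrow{S} U_1 \xrightarrow{C} U_2 \xrightarrow{S} U_R$ (Lemma~\ref{Lemma-D2}); a right state in $\mathcal{L}_3$ produces the two-wave sequence $U_L \xrightarrow{S} U_M \xrightarrow{C} U_R$ (Lemma~\ref{Lemma-2}). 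For $U_L \in \mathcal{R}$, a right state in $\mathcal{R}_1$ yields the three-wave sequence of Lemma~\ref{Lemma-D1}, while a right state in $\mathcal{R}_3$ yields the two-wave sequence of Lemma~\ref{Lemma-2}.

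Next I would treat each intersection in turn. On $\mathcal{L}_1\cap\mathcal{L}_2$ a two-wave ($C$ then $S$) solution meets a three-wave ($S$-$C$-$S$) solution, so crossing the boundary changes the number of waves. On $\mathcal{R}_1\cap\mathcal{R}_3$ the situation is symmetric: a three-wave sequence abuts a two-wave sequence. On $\mathcal{L}_1\cap\mathcal{L}_3$ both adjacent solutions contain two waves, but their types are interchanged, namely $C$ then $S$ on the $\mathcal{L}_1$ side versus $S$ then $C$ on the $\mathcal{L}_3$ side, so the type and ordering of the waves are not preserved. In all three cases the structure of the solution jumps discontinuously across the boundary, which is exactly the negation of structural stability.

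The remaining point, and the one requiring the most care, is the topological claim that $U_R$ is a genuine two-sided accumulation point of the adjacent regions, that is, every neighborhood of $U_R$ meets both sets in open pieces. Here I would invoke the smoothness of $\lambda_C$, already used in Lemma~\ref{Lemma:Classification} to guarantee that $\Gamma(\cdot)$ and $\mathcal{T}$ are curves, to conclude that the defining inequalities of the sets, for instance $\lambda_C(U)\ge\lambda_C(U_L)$ versus $\lambda_C(U)<\lambda_C(U_L)$, or $C\ge C_{\mathcal{T}1}$ versus $C<C_{\mathcal{T}1}$, cut every neighborhood of the interface into nonempty open pieces on each side. One must exclude the degenerate limiting configurations flagged in Remark~\ref{rem:set_division}, where an adjacent set may be empty and the interface collapses; away from those cases two-sidedness holds and the argument is complete. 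Finally, since exhibiting a single admissible perturbation of the initial data alone already destroys the wave pattern, the conclusion holds \emph{a fortiori} for the joint perturbations of data and flux that appear in the definition of structural stability.
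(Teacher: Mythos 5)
Your proposal is correct and follows essentially the same route as the paper: the paper likewise reads the adjacent wave sequences off Lemmas~\ref{Lemma-1}--\ref{Lemma:Classification}, uses the analyses of Subsections~\ref{sec:L1L2}, \ref{sec:L1L3}, and \ref{sec:R1R3} to exhibit the differing wave patterns at each intersection, and concludes by noting that any neighborhood of such a $U_R$ contains points of the adjacent sets, so a small perturbation changes the number or type of waves. Your added care about the two-sidedness of the interfaces (via smoothness of $\lambda_C$) and the exclusion of the degenerate cases of Remark~\ref{rem:set_division} only makes explicit what the paper leaves implicit.
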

%\gr{Notice that, when there is a lack of uniqueness, it happens in only one point, and the wave velocities of both solutions coincide.
%%"When uniqueness is lost, wave velocities become equal, and small perturbations in $U_R$ lead to the divergence of these velocities, causing only one of the solutions to be viable. Consequently, the solution continuously changes in the vicinity of the intersection point until the velocities coincide, indicating that the solution depends continuously on the initial data. Moreover, we have a unique solution in $L^p$ for $p={1,2}$. Thus, the following result holds:"
%Small perturbations in $U_R$ result in the distancing of these velocities, making only one of the solutions compatible. 
%Therefore, in the vicinity of the point located at the intersection, the solution varies continuously in $L^p$ norm, that is, the solution depends continuously on the initial data. Furthermore, the solution is unique in $L^p$-space, for $p \in \mathbb{N}$. We can summarize this ideas in the following result:}

Notice that, the lack of uniqueness happens one point, where the wave velocities of both solutions coincide. Despite the perturbations in $U_R$ result in different solutions, the waves velocity variation is continuous due to characteristics behavior in the $x$-$t$ plane. In addition, the variation in the intermediates states is bounded. Therefore, the solution $L^p$ norm for perturbed $U_R$, for a fixed time, depends continuously on initial conditions. As the same condition is valid for the points not at the intersections, the following result is valid:
\begin{theorem}
    The Riemann problem \eqref{eq:mass_con_S}-\eqref{eq:evol_con_C}, and \eqref{eq:initial_cond} is well-posed in $L^p$ norm, $p \in \mathbb{N}$.
\end{theorem}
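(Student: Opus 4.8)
The statement asserts \textbf{well-posedness}, which here means existence, uniqueness in the sense made precise in Lemma~\ref{Lemma:Unic_xt}, and continuous dependence of the solution on the initial data in the $L^p$ norm. Existence for every pair $(U_L,U_R)\in I\times I$ is already furnished by the explicit construction of compatible wave sequences in Theorem~\ref{teo:RP} and Lemma~\ref{Lemma:Classification}, while uniqueness in the $x$-$t$ plane is exactly Lemma~\ref{Lemma:Unic_xt}. Hence the only remaining ingredient is the \emph{continuous dependence} estimate: fixing $t>0$ and regarding the solution at that time as the map $(U_L,U_R)\mapsto U(\cdot,t)\in L^p(\mathbb{R})$, one must show this map is continuous. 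The plan is to prove continuity first on the open regions where the wave pattern is locally fixed, and then separately across the three exceptional sets where structural stability is lost.

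First I would treat a pair $(U_L,U_R)$ lying in the interior of one of the classification regions appearing in \eqref{eq:L1}--\eqref{eq:R3}. On such a region the type and number of waves are constant, so the solution profile $x\mapsto U(x,t)$ consists of a fixed finite list of constant states and rarefaction fans separated by breakpoints $x=\sigma_i t$. All the objects entering the construction --- the transition curve $\mathcal{T}$, the level curves $\Gamma(V)$, the companion value $S^k$, and the intermediate states $U_M,U_1,U_2$ --- are produced by transverse intersections of the $\mathscr{C}^2$ graphs of $f$ and $\lambda_C$, so by the smoothness of $f$ together with the Implicit Function Theorem they depend continuously on $(U_L,U_R)$; the velocities $\sigma_i$ given by \eqref{Eq:vel_BL} and \eqref{Eq:Vel_Contact} inherit this continuity. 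A routine estimate then bounds $\|U^{(1)}(\cdot,t)-U^{(2)}(\cdot,t)\|_{L^p}$ by a sum of terms of the form (size of state variation)$\times$(interval length) plus (breakpoint displacement)$\times$(bounded jump), each of which tends to $0$ as the data converge; this gives $L^p$ continuity on every such region.

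The substantive step is continuity \emph{across} the exceptional sets $\mathcal{L}_1\cap\mathcal{L}_2$, $\mathcal{L}_1\cap\mathcal{L}_3$ and $\mathcal{R}_1\cap\mathcal{R}_3$, where the number of waves jumps and the estimate above cannot be applied verbatim. The key structural fact, already recorded in Subsections~\ref{sec:L1L2}--\ref{sec:R1R3}, is that on each of these sets the relevant shock and contact velocities \emph{coincide}, because the states involved lie on a single secant line through $(-\mathcal{A},0)$ and share the same value of $\lambda_C$. Consequently the extra intermediate plateau present in one of the two phase-plane representations collapses onto a single characteristic ray $x=\sigma t$ and has zero width there. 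I would therefore show that as $U_R$ approaches the exceptional curve, the width of that plateau shrinks continuously to zero while the collapsing states remain bounded (they are confined to the compact secant configuration), so that both candidate $x$-$t$ profiles converge in $L^p$ to the single limit profile --- which by Lemma~\ref{Lemma:Unic_xt} is the value the solution takes on the exceptional set itself. Pasting this together with the interior continuity of the previous paragraph yields continuity of $(U_L,U_R)\mapsto U(\cdot,t)$ on all of $I\times I$, and, combined with existence and $x$-$t$ uniqueness, establishes well-posedness for every $p\in\mathbb{N}$.

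The main obstacle I anticipate is precisely this last step: making rigorous the claim that the $L^p$ discrepancy vanishes uniformly as the data cross a structurally unstable set where the wave count changes. One must control the rate at which the collapsing plateau width tends to zero against the (bounded) jumps across it, and verify that the intermediate states vary boundedly and continuously right up to the exceptional curve; the coincidence of the shock and contact velocities there is what makes this possible, but quantifying it carefully --- rather than merely observing it at the single limiting point --- is the delicate part of the argument.
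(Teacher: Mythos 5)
Your proposal is correct and follows essentially the same route as the paper: existence from the constructive classification, uniqueness in the $x$-$t$ plane from Lemma~\ref{Lemma:Unic_xt}, and continuous dependence in $L^p$ obtained from the continuity of the wave velocities and the boundedness of the intermediate states, with the coincidence of the shock and contact velocities on $\mathcal{L}_1\cap\mathcal{L}_2$, $\mathcal{L}_1\cap\mathcal{L}_3$, and $\mathcal{R}_1\cap\mathcal{R}_3$ forcing the extra plateau to collapse to zero width as the data approach these sets. In fact, the paper's own justification is only the short paragraph preceding the theorem statement, so your outline (splitting into structurally stable regions plus the crossing argument, and flagging the collapsing-plateau estimate as the delicate step) is, if anything, more detailed than the published argument.
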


\section{Application to foam displacement in porous media}
\label{sec:application}

In what follows, we describe the model implemented in the CMG-STARS simulator. Then, we show that the fractional flow function of this model satisfies the properties presented in Section~\ref{sec:RP}. Finally, we compare the analytical solution obtained in Section~\ref{sec:RP_construction} with the direct numerical simulations.

\subsection{CMG-STARS foam displacement model}
\label{App:A}

Let us consider the governing equations for an immiscible two-phase (gas-water), two-component displacement in one-dimensional flow through a porous medium assuming
Newtonian viscosity, no capillary pressure gradient, negligible gravitational effects, no physical dispersion, incompressible fluids, and a local foam steady-state (similar to \cite{Thorat2016foam, Zhang2009})
\begin{eqnarray}
\label{eq:mass_con}   \phi \partial_t S_w + \partial_x u_w &=& 0,\\
\label{eq:evol_C}   \partial_t\left[\phi \rho_w S_w C_s^w + (1-\phi)\rho_s C_s^s\right] + \partial_x\left[\rho_w C_s^w u_w\right] &=& 0,
\end{eqnarray}
where $\phi$ is the porosity, 
$S_w$ is the water saturation, 
$u_w$ is the water superficial velocity,
$\rho_w$ is the fluid density, $\rho_s$ is the rock density,
$C_s^w$ is the concentration of surfactant in the water phase, 
$C_s^s= K_d^a C_s^w$ is the surfactant adsorbed in the solid phase following Henry's adsorption, where $K_d^a$ is the {\it Freundlich} coefficient.
We consider a fully saturated porous medium $S_w+S_g=1$, where $S_g$ is the gas saturation. 
Equations \eqref{eq:mass_con} and \eqref{eq:evol_C} represent the water mass and total surfactant conservation, respectively.

The superficial velocities are given by Darcy's Law
\begin{equation}\label{eq:Darcy}
u_w = -(k k_{rw}/\mu_w )\nabla p, \quad u_g = -(k k^f_{rg}/\mu_g) \nabla p,
% u_w = -\dfrac{k k_{rw}}{\mu_w} \nabla p, \quad u_g = -\dfrac{k k^f_{rg}}{\mu_g} \nabla p,
\end{equation}
where $k$ is the reference permeability, $\mu_j$ is the dynamic viscosity of phase $j$, $\nabla p$ is the pressure gradient (which is assumed constant), $k_{rj}$  is the $j$-phase relative permeability ($j=w, \ g$, where $w=$ water and $g=$ gas) is given by Brooks-Corey relation \cite{Brooks1966}
\begin{equation}\label{eq:relative_perm}
k_{rj} = k^0_{rj} \left(\dfrac{S_j - S_{jr}}{1-S_{wc}-S_{gr}} \right)^{n_j}, \quad j= w, g,
\end{equation}
where $k^0_{rj}$ is the end-point relative permeability, $n_j$ is Corey's exponent related to the wettability, $S_{wc}$ and $S_{gr}$ are the residual saturations of each phase.

In \eqref{eq:Darcy}, $k^f_{rg}$ represents the gas relative permeability modified by the water saturation and the effect of the surfactant concentration
\begin{equation}\label{eq:perm_foam}
k^f_{rg} = k_{rg} \cdot FM,
\end{equation}
where $FM$ is the mobility reduction factor \cite{Stars,Zhang2009} and is defined as
\begin{equation}\label{eq:FM}
FM(S_w,C_s^w)= \left(1+f_{mmob} \, F_1(C_s^w) \cdot F_2(S_w)\right)^{-1}.
\end{equation}
Here, $f_{mmob}$ indicates the reference mobility reduction factor \cite{Hematpur2018}; if this parameter is zero, there is no foam. The {\it dry-out} function $F_2$ depends on the water saturation:
\begin{equation}\label{eq:F2}
F_2(S_w) = 1/2 + {\arctan\left( epdry(S_w-fmdry)\right)}/{\pi},
\end{equation}
where $fmdry$ is a critical water saturation and $epdry$ indicates the abruptness of the dry-out effect, \cite{Zhang2009, Hematpur2018}. The function $F_1$ describes the surfactant concentration effect in the wetting phase:
\begin{align}
F_1(C_s^w) & =\begin{cases}
\left(\dfrac{C_s^w}{fmsurf}\right)^{epsurf}, & \text{if } C_s^w < fmsurf, \\
\quad \quad  1, & \text{if } C_s^w \geq fmsurf,\end{cases}
\end{align}
where {\it fmsurf} is the critical surfactant concentration and {\it epsurf} is an exponent parameter.

Now we rewrite the system \eqref{eq:mass_con} and \eqref{eq:evol_C} in the form of \eqref{eq:mass_con_S}-\eqref{eq:evol_con_C}. Let us now consider dimensionless time and length variables
\begin{equation}\label{eq:dim_var}
t' = (u\,t)/\left((1-S_{wc}-S_{gr}) \phi L\right) , \quad x' = x/L,
% t' = \dfrac{u\,t}{(1-S_{wc}-S_{gr}) \phi L} , \quad x' = \dfrac{x}{L},
\end{equation}
where $L$ is the 1D length of the reservoir. The normalized saturation and surfactant concentration of each phase are
\begin{equation}\label{eq:norm_S}
S =(S_{w} - S_{wc})/\left(1-S_{wc}-S_{gr}\right), \quad C=C_s^w/C_{\text{max}}.
% S = \dfrac{S_{w} - S_{wc}}{1-S_{wc}-S_{gr}}, \quad C=\dfrac{C_s^w}{C_{\text{max}}}.
\end{equation}
Using \eqref{eq:dim_var} and \eqref{eq:norm_S}, the fractional flow function defined as $f=u_w/u$, where $u=u_g+u_w$, becomes
\begin{equation}\label{eq:frac_flow_norm}
f(S,C) = \dfrac{k_{rw}(S)}{k_{rw}(S) + (\mu_w / \mu_g) k^f_{rg}(S,C)}.
\end{equation}
Rewriting \eqref{eq:mass_con}-\eqref{eq:evol_C} in variables \eqref{eq:dim_var} and \eqref{eq:norm_S} yield the system \eqref{eq:mass_con_S}-\eqref{eq:evol_con_C} with $f$ defined in \eqref{eq:frac_flow_norm} and constant $\mathcal{A}$ given by
\begin{align}\label{constant_A}
\mathcal{A} = \left( S_{wc} + (1-\phi)(\rho_s/\rho_w \phi )K_d^a \right)/(1-S_{wc}-S_{gr}).
% \mathcal{A} = \dfrac{1}{1-S_{wc}-S_{gr}}\left( S_{wc} + \dfrac{(1-\phi)\rho_s K_d^a }{\rho_w \phi}\right).
\end{align}

\subsection{Verification of the main properties of the fractional flow function}
\label{subsec:STARS_model}

In this Subsection, we show that the fractional flow function of water phase $f$ defined in Eq.~\eqref{eq:frac_flow_norm} satisfies conditions $a), \ b)$ and $c)$ presented at the beginning of Section~\ref{sec:RP}. As mentioned in the introduction, the difference between condition $c)$ and the corresponding one used in \cite{ELI,johansen1988} is related to the modeled physical phenomenon. In these works, the presence of the chemical tracer increases the viscosity of the wetting phase corresponding to the polymer flow. In the present study, the tracer reduces the mobility of the non-wetting phase, corresponding to foam flow in porous media \cite{de2021microscale}. 

\begin{itemize}
\item [$a$)] Due to the definition of the functions $k_{rw}, \ k_{rg}$ and $FM$ (see Subsection~\eqref{App:A}), we conclude that $f \in \mathscr{C}^2$. In addition, since $k_{rw}(0) = 0$, and applying the limit on $f$ we conclude that $f(0, C)=0$ for every $C \in I$. Notice that, $k_{rg}^f(1,C)=0$, therefore $f(1,C)=1$ for every $C \in I$.

The partial derivative of $f$ in $S$ is given by
\begin{eqnarray}\label{eq:STARS_f_S}
\partial_S f(S,C) = \dfrac{\mu_w}{\mu_g} \left(\frac{d_S k_{rw}(S)k_{rg}^f(S,C) - k_{rw}(S)\partial_S k_{rg}^f(S,C)}{k_{rw}(S)+(\mu_w/\mu_g) k_{rg}^f(S,C)}\right).
\end{eqnarray}
Once $k_{rw}(0)=0=d_S k_{rw}(0)$ and $k_{rg}^f(1,C)=0=\partial_S k_{rg}^f(1,C)$ for every $C \in I$, we conclude that $\partial_S f(0,C)=0=\partial_Sf(1,C)$. 

\item[$b$)] Notice that $d_S k_{rw}(S), \ k_{rg}^f(S,C), \ k_{rw}(S)$ are positive and $\partial_S k_{rg}^f(S,C)$ is negative for $C \in I$ and $S \in (0,1)$. Using \eqref{eq:STARS_f_S} yields $\partial_S f(S,C) > 0$ for $C \in I$ and $S \in (0,1)$.

In Fig.~\ref{fig:Inflexion}, we provide a numerical approximation of the second derivative of the fractional flow function in $S$ for several values of $C$. To allow this analysis we use quadratic Corey's exponents, which are consistent with laboratory results \cite{Marchesin1992}. For values reported in Table \ref{Table:Values_parameter}, a single inflection point is observed for each $C$. However, rigorous proof remains an open problem.
\begin{figure}[h!]
\centering	
\includegraphics[height=5cm]{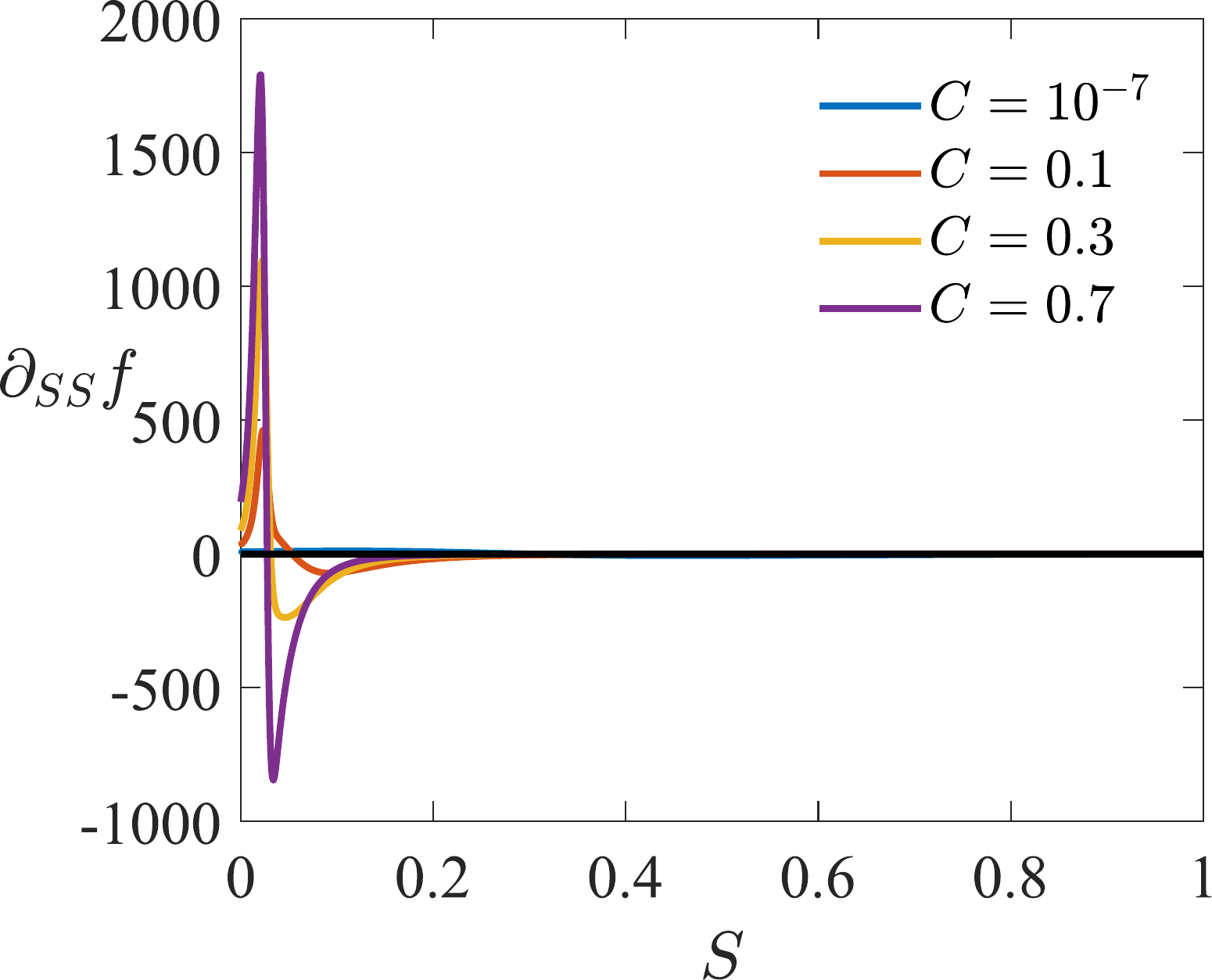}
\includegraphics[height=5cm]{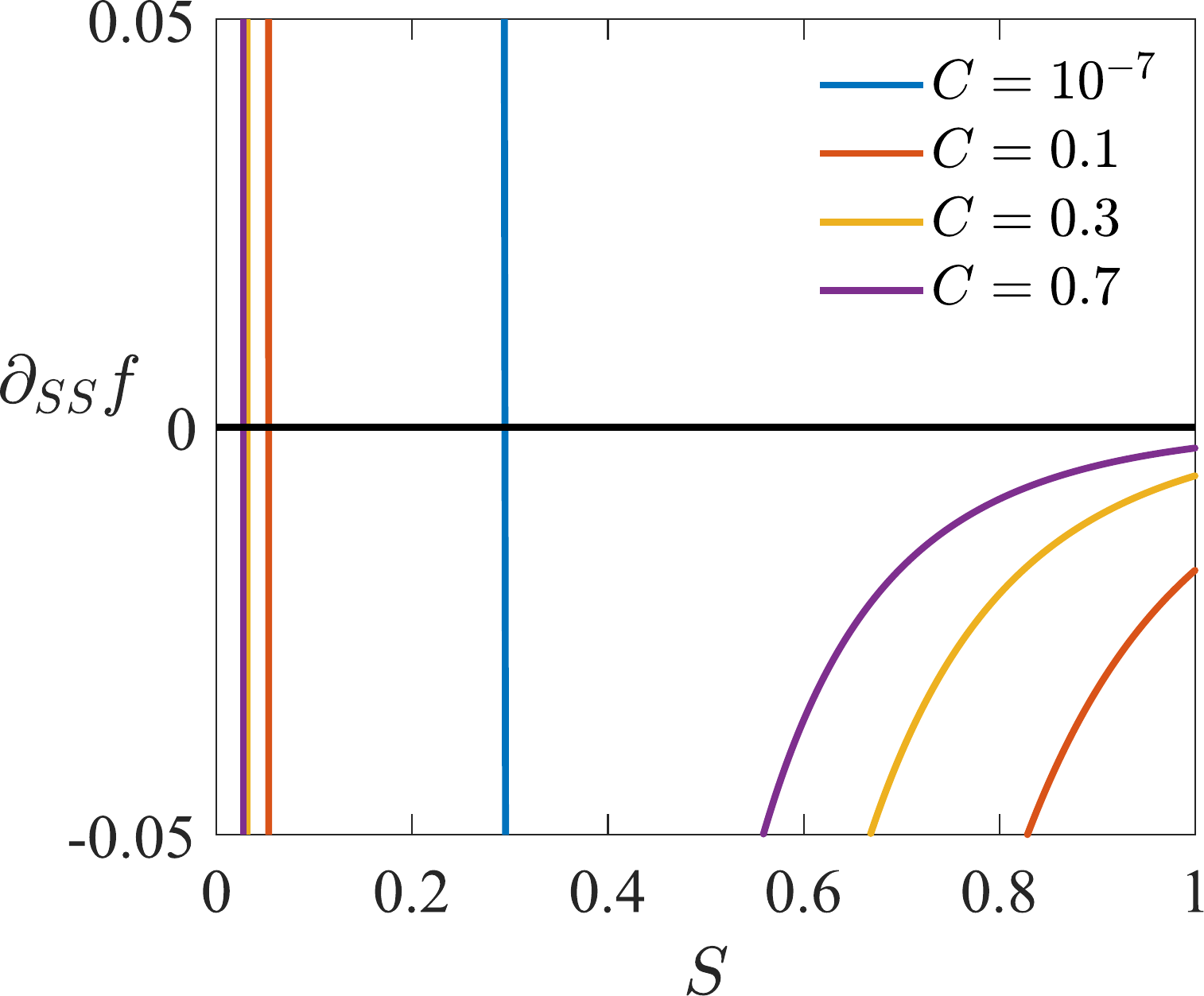}
\caption{The second derivative of $f$ as function of $C$. The right panel is a zoom of the left panel for $\partial_{SS}f$ between $-0.05$ and $0.05$.}
\label{fig:Inflexion}
\end{figure}

\item [$c$)] The derivative of $f$ in relation to $C$ is given by
\begin{eqnarray}
\partial_C f(S,C) = - \dfrac{\mu_w}{\mu_g} f(S,C)\left( \frac{\partial_C k_{rg}^f(S,C)}{k_{rw}(S)+{\mu_w}/{\mu_g} k_{rg}^f(S,C)}\right).
\end{eqnarray}
Since $fmsurf=C_{max}$ and $epsurf=1$, we obtain $\partial_C k_{rg}^f(S,C) < 0$ for all $C \in I$ and $S \in (0,1)$. As $k_{rw}(S) >0$, $f(S,C)>0$ and $ k_{rg}^f(S,C)>0$ for all $S \in (0,1)$ and $C \in I$, we conclude that $\partial_C f(S,C) > 0$ for all $C \in I$ and $S\in (0,1)$.
\end{itemize}
	
\subsection{Numerical simulations}
\label{subsec:numerical}

The solution of the system \eqref{eq:mass_con_S}-\eqref{eq:evol_con_C} together with initial condition \eqref{eq:initial_cond} is solved using the RCD solver \cite{lambert2020mathematics}. This second-order solver is based on the implicit finite differences Crank-Nicolson scheme combined with Newton's method. The boundary conditions were: Dirichlet and no-flow Neumann on the left and right sides, respectively. For the space discretization, we used 7000 points, and for the time discretization, the time-step was $10^{-6}$. We used parameter values summarized in Table~\ref{Table:Values_parameter}.

\begin{table}[ht]
\centering
\caption{The parameter values used in numerical simulations. Source: \cite{Stars,Valdez2021,Marchesin1992}.}
\begin{tabular}{l|l|l}
	Symbol & Parameter & Value  \\
	\hline
	$k^0_{rw}$ & End-point water relative permeability & 0.302\\
	$k^0_{rg}$ & End-point gas relative permeability& 0.004\\
	$n_w$ & Corey's exponent for water & 2 \\ 
	$n_g$ & Corey's exponent for gas & 2 \\  
	$\mu_w$ & Water viscosity &1e-03 [Pa $\cdot$ s] \\
	$\mu_g$ & Gas viscosity & 5e-05 [Pa $\cdot$ s] \\ 
	$\phi$ & Porosity & 0.21 \\
	$\rho_w$ & Water density & 1000\\ 
	$\rho_s$ & Solid density & 2000\\ 
	$fmmob$ & Mobility reduction factor & 293.27 \\
	$fmdry$ & Critical water saturation & 0.437 \\
	$epdry$ & Abruptness of dry out effect & 359.33 \\
	$S_{wc}$ & Water connate saturation & 0.43 \\ 
	$S_{gr}$ & Gas residual saturation & 0.293 \\
	$fmsurf$ & Critical surfactant concentration &2 [g/L] \\ 
	$epsurf$ & Foam strength coefficient &1 [g/L]\\ 
	$C_{max}$ & Maximum surfactant concentration & 2 \\
	$K_d^a$ & Adsorption constant & 0.05\\
	\hline
\end{tabular}
\label{Table:Values_parameter}
\end{table}

We provide two examples of the solution profile for water saturation and surfactant concentration in Fig.~\ref{Fig:numerical_example_1} and compare them to numerical simulations. 
The left panel in Fig.~\ref{Fig:numerical_example_1} shows the solution construction for a drainage case (injection of gas into the reservoir filled with water) composed of a contact wave followed by a shock wave. The solution, following Lemma~\ref{Lemma-1} is composed of a $C$-wave followed by a shock $S$-wave. 
The right panel in Fig.~\ref{Fig:numerical_example_1} shows the solution construction for imbibition (injection of water into the reservoir filled with gas). This case corresponds to Lemma \ref{Lemma-D2} with the solution given by an $S$-wave (in this case, a rarefaction wave and a shock wave) followed by a contact wave and another $S$-wave. In both cases, the analytical and numerical solutions present an excellent agreement.
\begin{figure}[h!]
\centering	    
\includegraphics[height=5cm]{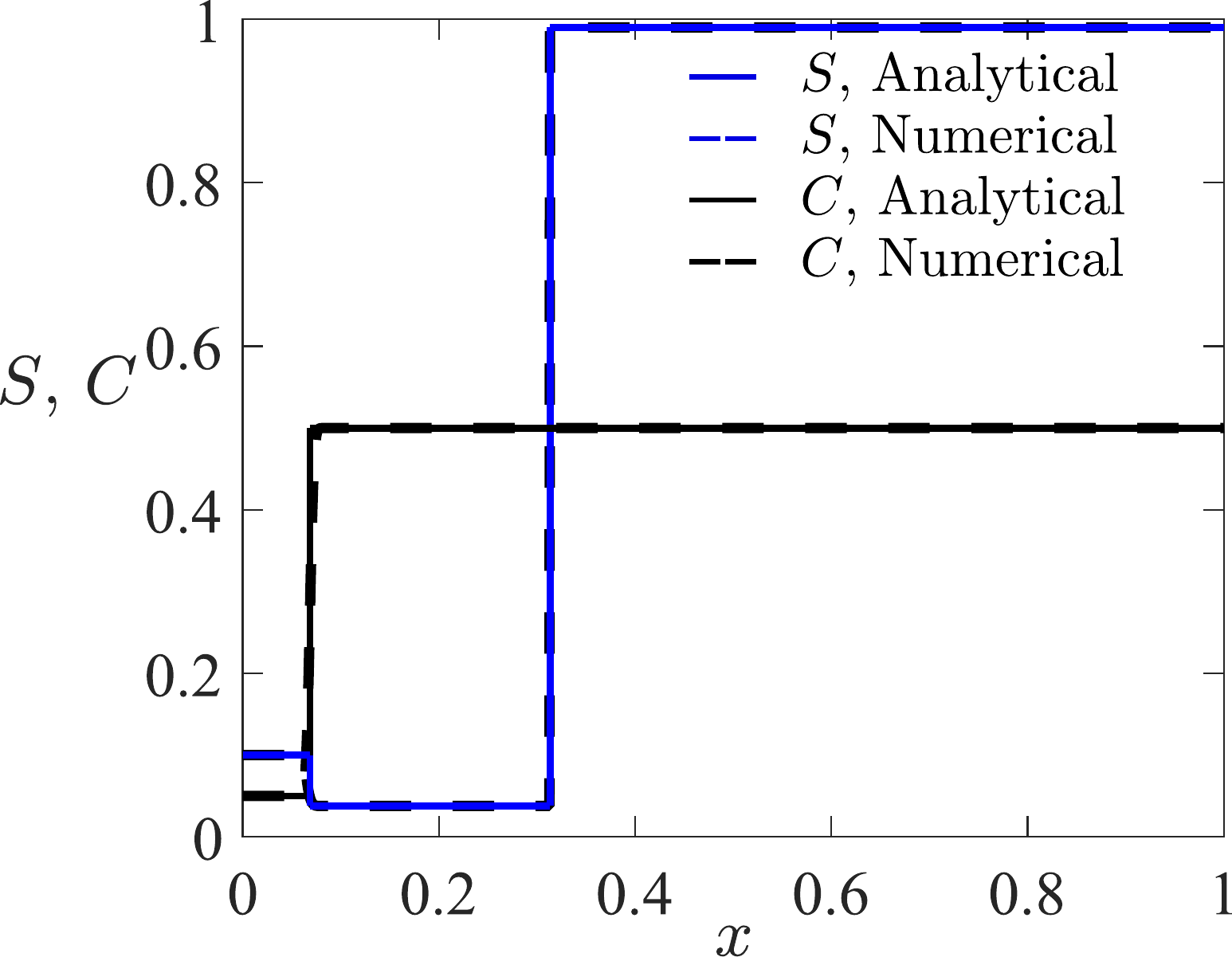} \hfill
\includegraphics[height=5cm]{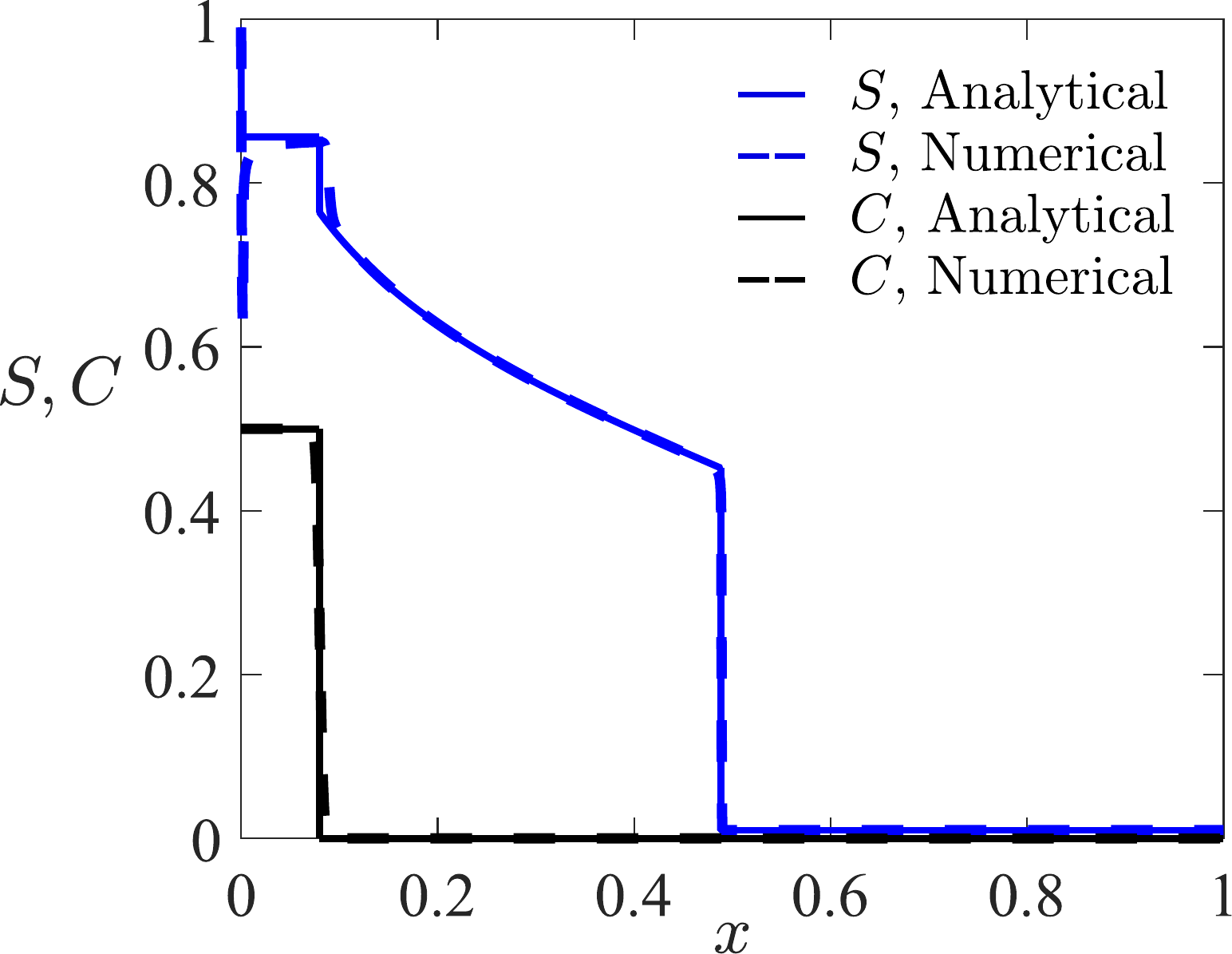}
\caption{Comparison between the numerical (dashed lines) and analytical (solid lines) solutions of the water saturation $S$ (blue) and surfactant concentration $C$ (black) profiles using data from Table~\ref{Table:Values_parameter}. The left panel corresponds to $U_L=(0.1, 0.05)$, $U_R=(0.99,0.5)$, and time $t = 0.5$. The right panel corresponds to $U_L=(0.99, 0.5)$, $U_R=(0.01,0)$, and $t = 0.3$. } 
\label{Fig:numerical_example_1}
\end{figure}

Next, we show the numerical effect resulting from the lack of structural stability presented in Section~\ref{Sec:Uniqueness}. Figure \ref{Fig:Unicidade_2_RCD} presents the water saturation profiles for $U_R \in \mathcal{L}_1 \cap \mathcal{L}_3$ (corresponding to Fig.~\ref{fig:Unicidade_PF} in the phase plane and to solution profiles in Fig.~\ref{fig:Unicidade_11}).
From the numerical perspective, it is impossible to pick the point exactly at the intersection of two sets. That is why, we choose $S_R$ values in the neighborhood of the intersection: $U_{R+}=(0.367,0.7273)$ and $U_{R-}=(0.360,0.7273)$. Notice that, for perturbations in $S_R$ the water saturation presented qualitatively different profiles. As previously commented, this behavior is not related to numerical issues, but the loss of structural stability by the model in the neighborhood. Cases for $U_R \in \mathcal{L}_1 \cap \mathcal{L}_2$ and $U_R \in \mathcal{R}_1 \cap \mathcal{R}_3$ are analogous.
\begin{figure}[h!]
\centering	    
\includegraphics[height=5cm]{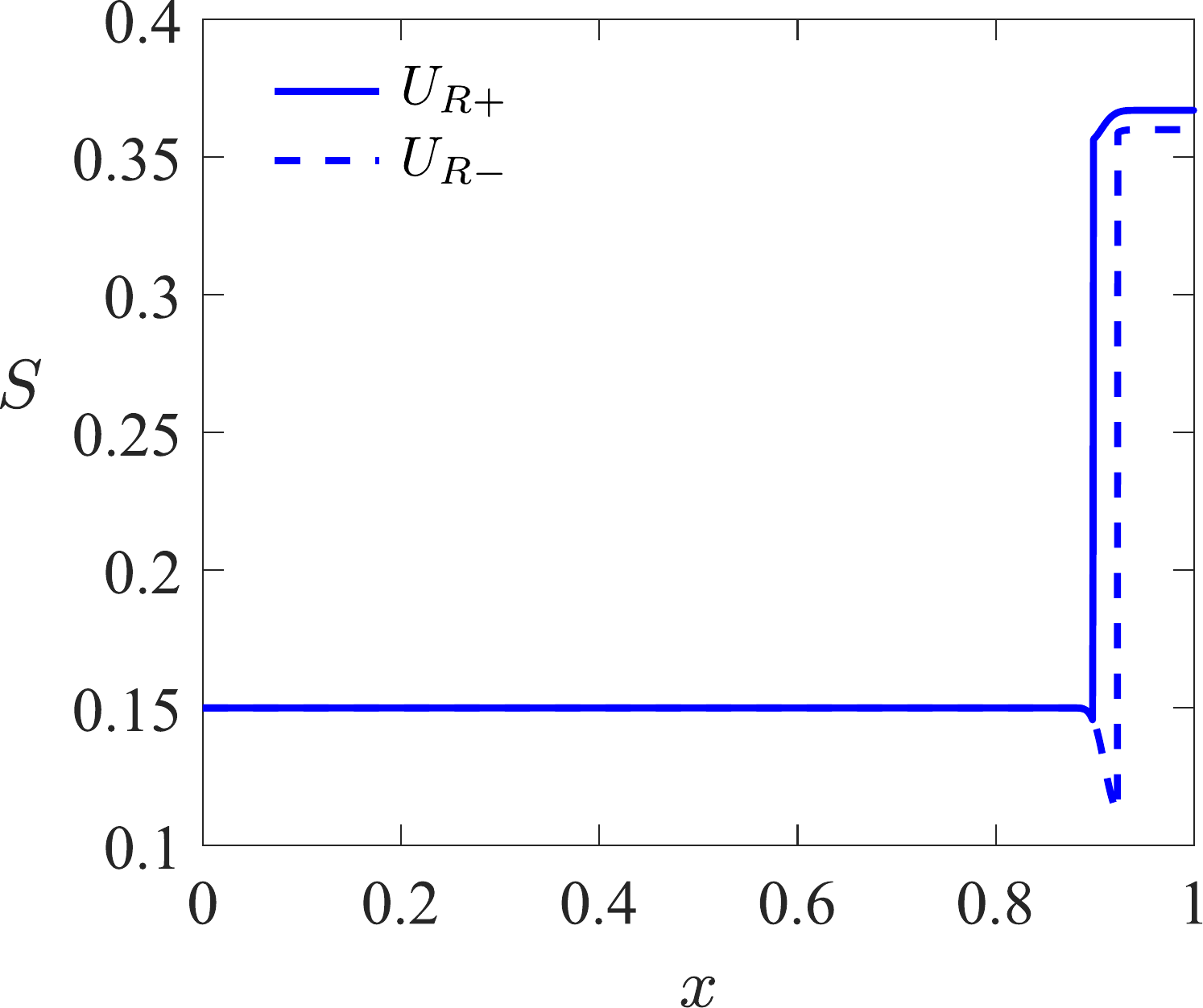} 
\caption{Water saturation profiles at time $t=3$ for the left state $U_L=(0.15,0.4)$ and two nearby right states $U_{R-}=(0.360,0.7273)$ and $U_{R+}=(0.367,0.7273)$ in the neighborhood of the intersection $U_R \in \mathcal{L}_1 \cap \mathcal{L}_3$.} 
\label{Fig:Unicidade_2_RCD}
\end{figure}

\section{Conclusions}
\label{sec:conclusions}

Motivated by the foam displacement in porous media with linear adsorption, we extended the existing framework for the two-phase flow containing an active tracer described by a non-strictly hyperbolic system of conservation laws.
We solved the global Riemann problem by presenting possible wave sequences that composed this solution. Although the problem is well-posed for all Riemann data, we identified parameter regions where the solution lacks structural stability.

We verified that the CMG-STARS model describing foam displacement in porous media with linear adsorption satisfies the hypotheses to apply the developed theory. Therefore, there exists a parameter region where the CMG-STARS model losses the structural stability, which can result in numerical oscillations. Our results provide a possible explanation to some numerical issues appearing in commercial simulators.

%All analytical solutions presented in this model match with direct numerical simulation results. In agreement with the general theory, 
%\giu{Comentar que o problema é bem posto e que essa perda de estabilidade estrutural pode levar a oscilações ao tentar resolver esse problema numericamente.}

\section{Acknowledgments}

%The authors are grateful for the financial support provided by CAPES and CNPq during the course of this research. 
We thank Prof. Dr. F. Furtado, Prof. Dr. A. Pires and Prof. Dr. Y. Petrova for helpful discussions improving the mathematical quality of this work.

\bibliographystyle{siamplain}
\bibliography{references}
\end{document}